\patchcmd{\thebibliography}{*}{}{}{}
\pretocmd\thebibliography{\csname c@secnumdepth\endcsname=-2 }{}{}
\setlist[enumerate]{label=(\roman*)} 
\let\c@equation\c@subsection
\newcommand{\Db}{\rmD^{\rmb}}
\newcommand{\ubar}[1]{\underline{\smash{#1}}}
\newcommand{\pH}{\prescript{p}{}{\mathscr{H}}}
\newcommand{\pre}[1]{\prescript{#1}{}}
\newcommand{\Gz}{{G_{\ubar 0}}}
\newcommand{\Gzt}{{G_{\ubar 0, \diamond}}}
\newcommand{\Gzq}{{G_{\ubar 0,q}}}
\newcommand{\Gztq}{{G_{\ubar 0, \diamond,q}}}
\newcommand{\Hxid}{\bfH'_{\xi}}
\newcommand{\cHd}{\ha\calH'}
\newcommand{\Lztq}{L_{0,\diamond,q}}
\newcommand{\Pztq}{P_{0,\diamond,q}}
\newcommand{\Mt}{{M_{\diamond}}}
\newcommand{\Mtq}{{M_{\diamond,q}}}
\newcommand{\Mzt}{{M_{0,\diamond}}}
\newcommand{\Mztq}{{M_{0,\diamond,q}}}
\newcommand{\Ox}{{\calO_{\bfx,\eta/2m}(\Hxid)}}
\newcommand{\lambdat}{{\lambda_{\diamond}}}
\newcommand{\lambdatq}{{\lambda_{\diamond,q}}}
\newcommand{\lambdaq}{{\lambda_{q}}}
\newcommand{\xr}{{\lambda_q}}
\newcommand{\Tt}{{T_{\diamond}}}
\newcommand{\Cq}{{\bfC^{\times}_{q}}}
\newcommand{\Ct}{{\bfC^{\times}_{t}}}
\newcommand{\Ctm}{{\bfC^{\times}_{t^{1/2m}}}}
\newcommand{\autorefitem}[2]{\hyperref[#1]{\autoref*{#1}~(#2)}}
\title{Generalised Springer correspondence for $\bfZ/m$-graded Lie algebras}
\date{\today}
\author{\sc Wille Liu\thanks{The first version of this work was realised during the author's doctoral programme of ED 386 hosted in the Institut Mathématique de Jussieu-Paris Rive Gauche. Later revisions are made during his postdoctoral fellowship in Max-Planck-Institut f\"ur Mathematik in Bonn.}}
\address{Max-Planck-Institut f\"ur Mathematik \\
Vivatsgasse 7 \\
53111 Bonn \\
Germany \\
}
\email{wille@mpim-bonn.mpg.de}
\begin{document}
\def\smfbyname{}

\begin{abstract}
Let $G$ be a simple simply connected complex algebraic group and let $\mathfrak{g}_*$ be a $\mathbf{Z}/m$-grading on its Lie algebra $\mathfrak{g}$. In a recent series of articles, G. Lusztig and Z. Yun, studied the classification of simple $G_0$-equivariant perverse sheaves on the nilpotent cone of $\mathfrak{g}_i$ for $i\in \mathbf{Z}/m$, where $G_0$ is the exponentiation of the degree zero piece $\mathfrak{g}_0$. They proved a decomposition of the equivariant derived category of $\ell$-adic sheaves on the nilpotent cone of $\mathfrak{g}_i$ into blocks, each generated by a certain cuspidal local system via {\itshape spiral inductions}. We prove a conjecture of them, which predicts the bijectivity of a map from 1) the set of simple perverse sheaves in a fixed block to 2) the set of simple modules of a block of a (trigonometric) degenerate double affine Hecke algebra (dDAHA). This is a dDAHA analogue of the Deligne--Langlands correspondence for affine Hecke algebras proven by Kazhdan--Lusztig. Our results generalise a previous work of E. Vasserot, where the perverse sheaves in the principal block were considered.
\end{abstract}
\begin{altabstract}
Soient $G$ un groupe alg\'ebrique complexe simple simplement connexe et $\mathfrak{g}_*$ une $\mathbf{Z}/m$-graduation sur $\mathfrak{g} = \Lie G$. Dans une s\'erie r\'ecente d'articles, G. Lusztig et Z. Yun ont \'etudi\'e la classification des faisceaux pervers simples $G_0$-\'equivariants sur le c\^one nilpotent de $\mathfrak{g}_i$ pour $i\in \mathbf{Z}/m$, o\`u $G_0$ est l'exponentialis\'e de la composante de degr\'e nul $\mathfrak{g}_0$. Ils ont \'etabli une d\'ecomposition en blocs de la cat\'egorie d\'eriv\'ee \'equivariante des faisceaux $\ell$-adiques sur le c\^one nilpotent de $\mathfrak{g}_i$; chacun des blocs est engendr\'e par un certain syst\`eme local cuspidal via les {\itshape inductions spirales}. Nous d\'emontrons leur conjecture qui pr\'edit la bijectivit\'e d'une application de 1) l'ensemble des faisceaux pervers simples dans un bloc donn\'e dans 2) l'ensemble des modules simples d'une alg\`ebre de Hecke doublement affine d\'eg\'en\'er\'ee. Ceci est pour les alg\`ebres de Hecke doublement affines d\'eg\'en\'er\'ees un r\'esultat analogue \`a la correspondance de Deligne--Langlands, d\'emontr\'ee par Kazhdan--Lusztig et portant sur les alg\`ebres de Hecke affines. Nos r\'esultats g\'en\'eralisent ceux d'un travail d'E. Vasserot, dans lequel seuls les faisceaux pervers dans le bloc principal \'etaient pris en compte.
\end{altabstract}
\maketitle

\section*{Introduction}
In the present article, we establish a generalised Springer correspondence for $\bfZ / m$-graded Lie algebras and (trigonometric) degenerate double affine Hecke algebras, which was conjectured by Lusztig--Yun~\cite{LYIII}. The main result is~\autoref{theo:classification}, which confirms the multiplicity-one conjecture proposed in~\cite{LYIII} and can be viewed as a generalised Springer correspondence in the sense of Lusztig~\cite{lusztig84,lusztig88}, for certain degenerate double affine Hecke algebras (dDAHAs) with possibly unequal parameters. \par

\subsection*{Generalised Springer correspondence}

G. Lusztig in~\cite{lusztig84} generalised the classical Springer correspondence on reductive groups by introducing cuspidal local systems. \par

Let $G$ be a complex connected reductive group and let $\frakg = \Lie G$ denote its Lie algebra, on which $G$ acts by the adjoint action. A {\it cuspidal pair} $(\rmO, \scrC)$ on $\frakg$ consists of a nilpotent $G$-orbit $\rmO\subset \frakg^{\nil}$ together with an irreducible $G$-equivariant local system $\scrC$ on $\rmO$ such that the following condition holds: for every strict parabolic subalgebra $\frakp\subsetneq \frakg$ with unipotent radical $\fraku\subseteq \frakp$ and for any element $x\in \rmO$, we have $\rmH^*_c((x + \fraku)\cap \rmO, \scrC) = 0$. \par

Given the reductive group $G$, the following statements are proven in~\cite{lusztig84}:
\begin{enumerate}
	\item 
		There is a partition of the set $\Irr \Perv_G(\frakg^{\nil})$ of isomorphism classes of simple $G$-equivariant perverse sheaves on the nilpotent cone $\frakg^{\nil}$ into {\it series}:
		\begin{equation*}\begin{aligned}
				\Irr \Perv_G(\frakg^{\nil}) = \bigsqcup_{\xi} \Irr \Perv_G(\frakg^{\nil})_{\xi},
		\end{aligned}\end{equation*}
		where $\xi$ runs over all triples $(M, \rmO, \scrC)$ up to conjugation, where $M\subset G$ is a Levi subgroup and $(\rmO, \scrC)$ is a cuspidal pair on the Lie algebra $\frakm$ of $M$.
	\item
		For each triple $\xi = \left( M, \rmO, \scrC \right)$, there is a crystallographic finite Coxeter group $W_{\xi}$, called {\it relative Weyl group}, with a bijection
		\begin{equation*}\begin{aligned}
				\Irr \Perv_G(\frakg^{\nil})_{\xi} \xlongrightarrow{\sim} \Irr(\bfC W_{\xi}\mof).
		\end{aligned}\end{equation*}
\end{enumerate}
The series $\Irr \Perv_G(\frakg^{\nil})_{\xi}$ is defined using the {\it Lusztig--Spaltenstein parabolic induction}. Given such a triple $\xi = (M, \rmO, \scrC)$, let $P$ be a parabolic subgroup of $G$ containing $M$ as Levi factor and let $U$ be the unipotent radical of $P$ with Lie algebra $\fraku = \Lie U$. Consider the following diagram of stacks:
\[
	[G\backslash \frakg] \xleftarrow{a}[G\backslash (G\times^P\left(\rmO\oplus\fraku\right)) ] \xrightarrow{\cong} [P\backslash(\rmO\oplus\fraku)]\xrightarrow{b} [M\backslash\frakm],
\]
where $\rmO\oplus \fraku$ is the image of the addition map $\rmO\times \fraku \xrightarrow{+} \frakp$.
The complex $\bfI^{\xi} = a_*b^*\scrC[\dim G\times^P(\rmO\oplus \fraku)]$ is a $G$-equivariant semisimple perverse sheaf and $\Irr\Perv_G\left( \frakg^{\nil} \right)_{\xi}$ consists of the simple constituents of $\bfI^{\xi}$. The bijection in statement (ii) was constructed by means of a ring isomorphism $\bfC W_{\xi} \xrightarrow{\cong} \End(\bfI^{\xi})$. 

\subsection*{Equivariant enhancements}

Let $\Db_G(\frakg^{\nil})$ be the equivariant category of Bernstein--Lunts~\cite{BL94}. In the above statements (i) and (ii), the $G$-equivariance is only a condition on perverse sheaves and higher extensions between perverse sheaves in $\Db_G(\frakg^{\nil})$ play no r\^ole there. \par
In~\cite{RR16}, higher extensions are taken into account for the statement (i). It is enhanced into the following:
\begin{enumerate}
	\item[(i\textsuperscript{+})]
		There is an orthogonal decomposition of triangulated category:
		\begin{equation*}\begin{aligned}
			\Db_G(\frakg^{\nil}) = \bigoplus_{\xi}\Db_G(\frakg^{\nil})_{\xi},
		\end{aligned}\end{equation*}
		where $\Db_G(\frakg^{\nil})_{\xi}$ is the thick triangulated subcategory generated by $\bfI^\xi$.
\end{enumerate}
For each triple $\xi$ as above, there is a {\it graded affine Hecke algebra} (graded AHA) $\underline\bfH_{\xi}$ : it is a degenerate version of the affine Hecke algebras introduced by G. Lusztig~\cite{lusztig89}, see~\autoref{subsec:rappel} for the definition of the graded AHA in our setting. Let $\Cq = \bfC^{\times}$ be a one-dimensional torus which acts linearly on the Lie algebra $\frakg$ by weight $-2$ and trivially on the group $G$. The perverse sheaf $\bfI^{\xi}$ acquires a $G\times \Cq$-equivariant structure. In~\cite{EM} and~\cite{lusztig95}, statement (ii) above is enhanced into the following:
\begin{enumerate}
	\item[(ii\textsuperscript{+})]
		There is an isomorphism of graded rings
\begin{equation}\begin{aligned}\label{equa:EML}
	\underline\bfH_{\xi}\xrightarrow{\cong} \bigoplus_{k\in \bfZ}\Hom_{\Db_{G\times \Cq}(\frakg^{\nil})}(\bfI^{\xi}, \bfI^{\xi}[k]).
\end{aligned}\end{equation}
\end{enumerate}
The summands of the right-hand side vanish except for $k \in 2\bfZ_{\ge 0}$. Taking quotient by the graded radicals of both sides of~\eqref{equa:EML}, we recover the aforementioned isomorphism $\bfC W_{\xi}\cong \End(\bfI^{\xi})$.
One can also replace $G$ with $G\times \Cq$ in statement (i\textsuperscript{+}).

\subsection*{Perverse sheaves on \texorpdfstring{$\bfZ$}{Z}-graded Lie algebras}
Keep the reductive group $G$ as above. If $\lambda\in\bfX_*(G)$  is a cocharacter, then it gives rise to a $\bfZ$-grading on $\frakg$ by 
\[
	\frakg = \bigoplus_{n\in\bfZ}\frakg_n,\quad \frakg_n = \left\{ x\in \frakg\;;\; \lambda(t)x = t^nx,\quad \forall t\in \bfC^\times\right\}.
\]
Fix $\eta\in\bfZ_{\neq 0}$. The fixed-point subgroup $G^{\lambda}$ acts on the subspace $\frakg_{\eta}$ by the adjoint action. The number of $G^{\lambda}$-orbits in $\frakg_{\eta}$ is finite. The classification of $G^{\lambda}$-equivariant simple perverse sheaves on $\frakg_{\eta}$ proven in~\cite{lusztig88,lusztig95,lusztig95b,EM} is the following:
\begin{enumerate}
	\item[(i')]
		There is a partition of the set $\Irr \Perv_{G^{\lambda}}(\frakg_{\eta})$ of isomorphism classes of simple $G^{\lambda}$-equivariant perverse sheaves on the subspace $\frakg^{\lambda}_{\eta}$ into {\it series}:
		\begin{equation*}\begin{aligned}
				\Irr \Perv_{G^{\lambda}}(\frakg_{\eta}) = \bigsqcup_{\xi} \Irr \Perv_{G^{\lambda}}(\frakg_{\eta})_{\xi},
		\end{aligned}\end{equation*}
		where $\xi$ runs over all triples $(M, \rmO, \scrC)$ up to $G^{\lambda}$-conjugation, where $M\subset G$ is a $\lambda$-stable Levi subgroup and $(\rmO, \scrC)$ is a cuspidal pair on $\frakm$ such that $\rmO\cap \frakg_{\eta}\neq \emptyset$.
	\item[(ii')]
		For each such triple $\xi = \left( M, \rmO, \scrC \right)$, there is a bijection
		\begin{equation*}\begin{aligned}
				\Irr \Perv_G(\frakg_{\eta})_{\xi}\xlongrightarrow{\sim} \Irr\underline\bfH_{\xi}\mof_{(\lambda,\eta/2)},
		\end{aligned}\end{equation*}
		where $\underline\bfH_{\xi}\mof_{(\lambda,\eta/2)}$ is a {\it block} of the category of finite dimensional $\underline\bfH_{\xi}$-modules, which depends on $\lambda$ and $\eta / 2$. 
\end{enumerate}
The series $\Irr \Perv_{G^{\lambda}}(\frakg_{\eta})_{\xi}$ is defined using a $\bfZ$-graded version of parabolic induction of the cuspidal local system $\scrC$. Statement (ii') can be derived from the isomorphism~\eqref{equa:EML} by the technique of equivariant localisation. We will review these results in~\autoref{sec:ZgrAHA} below. \par

One might want to generalise the above story to loop groups / affine Kac--Moody groups. On the algebraic side, the affinisation of graded affine Hecke algebra is the degenerate double affine Hecke algebra (dDAHA). On the geometric side, the affinisation of the Springer resolution is the affine Springer resolution. However, instead of working on the infinite-dimensional geometry of the loop groups and affine Springer resolutions, we will consider cyclic gradings on simple Lie algebras, see~\autoref{subsec:heuristic} for a heuristic explanation of the relation between $\bfZ$-gradings on loop Lie algebras and cyclic gradings on simple Lie algebras. \par

\subsection*{Perverse sheaves on \texorpdfstring{$\bfZ/m$}{Z/m}-graded Lie algebras and dDAHA}
Suppose that we are given a simply connected simple complex algebraic group $G$ together with a group homomorphism $\theta:\mu_m\to \Aut(G)$, where $m \in \bfZ_{> 0}$ and $\mu_m = \left\{ z\in \bfC^{\times}\;;\; z^m = 1 \right\}$. The map $\theta$ gives rise to a $\bfZ/m$-grading on the Lie algebra $\frakg = \Lie G$ by
\[
	\frakg = \bigoplus_{\ubar k\in \bfZ / m}\frakg_{\ubar k},\quad \frakg_{\ubar k} = \left\{x\in \frakg\;;\; \theta(\zeta)x = \zeta^k x,\quad \forall \zeta\in \mu_m  \right\}.
\]
Also let $\Gz = G^{\theta}$ denote the fixed points of $\theta$ in $G$. Fix $\eta\in \bfZ_{\neq 0}$ and let $\ubar \eta\in \bfZ / m$ be its congruence class. In~\cite{LYI}, G. Lusztig and Z. Yun addressed the problem of partitioning simple $\Gz$-equivariant perverse sheaves on the nilpotent cone $\frakg^{\nil}_{\ubar\eta} = \frakg^{\nil}\cap \frakg_{\ubar\eta}$ into {\it series} by introducing the crucial geometric tool of {\it spiral induction}. The result can be stated as follows:
\begin{enumerate}
	\item[(i'')]
		There is a partition of the set $\Irr \Perv_{\Gz}(\frakg^{\nil}_{\ubar \eta})$ of isomorphism classes of simple $\Gz$-equivariant perverse sheaves on $\frakg^{\nil}_{\ubar\eta}$ into {\it series}:
		\begin{equation*}\begin{aligned}
				\Irr \Perv_{\Gz}(\frakg^{\nil}_{\ubar\eta}) = \bigsqcup_{\xi} \Irr \Perv_{\Gz}(\frakg^{\nil}_{\ubar\eta})_{\xi},
		\end{aligned}\end{equation*}
		where $\xi$ runs over all {\it admissible systems} $(M, \frakm_*, \rmO, \scrC)$ on $\frakg_{\ubar \eta}$, introduced in~\cite[\S 3]{LYI}, see~\autoref{subsec:admsys}. Moreover, the orthogonal decomposition also holds:
		\begin{equation*}\begin{aligned}
			\Db_{\Gz}(\frakg^{\nil}_{\ubar\eta}) = \bigoplus_{\xi} \Db_{\Gz}(\frakg^{\nil}_{\ubar\eta})_{\xi}.
		\end{aligned}\end{equation*}
\end{enumerate}

In~\cite{LYIII}, the authors defined for each admissible system $\xi$ an affine root system and a dDAHA $\bfH_{\xi}$, see~\autoref{subsec:daha}. The main result of~\cite{LYIII} is the construction of an action of $\bfH_{\xi}$ on an infinite sum of the spiral induction from various spirals 
\begin{equation*}\begin{aligned}
	\bfI^{\xi} = \bigoplus_{\frakp_*}\Ind_{\frakp_{\eta}}^{\frakg_{\ubar \eta}}\scrC.
\end{aligned}\end{equation*}
They constructed an action of $\bfH_{\xi}$ on the perverse cohomology $\pH\bfI^{\xi}= \bigoplus_k\pH^k\bfI^{\xi}$ and they conjectured that the multiplicity space of each simple constituent of the perverse cohomology $\pH\bfI^{\xi}$ is a simple $\bfH_{\xi}$-module and this yields an injection
\begin{equation*}\begin{aligned}
		\Irr \Perv_{G_{\ubar 0}}(\frakg_{\ubar\eta})_{\xi}\hookrightarrow \Irr\calO(\bfH_{\xi}),
\end{aligned}\end{equation*}
where $\calO(\bfH_{\xi})$ is the category of integrable $\bfH_{\xi}$-modules, see~\autoref{subsec:OH}. The main result of this article confirms this conjecture:
\begin{enumerate}
	\item[(ii'')]
		\begin{theo*}[=\autoref{theo:classification}]
			There is a bijection 
			\[
				\Irr \Perv_{G_{\ubar 0}}(\frakg_{\ubar\eta})_{\xi}\xlongrightarrow{\sim} \Irr\calO_{\bfx, \eta/2m}(\bfH_{\xi}),
			\]
			where $\Irr\calO_{\bfx, \eta/2m}(\bfH_{\xi})$ is a block of $\calO(\bfH_{\xi})$.
		\end{theo*}
\end{enumerate}

\begin{rema*}
	It turns out that, in statement (i''), a naive $\bfZ/m$-graded version of parabolic induction is not enough to generate the series $\Irr \Perv_{\Gz}(\frakg_{\ubar\eta})_{\xi}$. This is related to the Gelfand--Kirillov dimension of simple modules of the dDAHA $\bfH_{\xi}$: only those simple perverse sheaves whose corresponding simple $\bfH_{\xi}$-modules are of maximal GK-dimension appear in the parabolic induction. This phenomenon is explained in~\cite{liu2019extension}. The {\it spiral induction} of the cuspidal local system $\scrC$ introduced in~\cite{LYI} is the right substitute. The notion of a spiral is closely related to parahoric subalgebras of loop Lie groups. We refer to~\autoref{subsec:spirals} for its definition. \par
\end{rema*}

We achieve the theorem by constructing in~\autoref{theo:Phi} a ring homomorphism
\[
	\Phi:\bfH_{\xi}\to \Ext^*_{\Gzt\times \Cq}(\bfI^{\xi}, \bfI^{\xi})_{0}=: \ha\calH,
\]
			\label{prop:surj}
			where $\Gzt$ is the semi-direct product of $\Gz$ with a {\it loop rotation torus} (\autoref{sec:strat}) acting on it. The ring $\ha\calH$ is the completion of the extension algebra of $\bfI^{\xi}$ at the augmentation ideal $\rmH^{>0}_{\Gzt\times \Cq}$ of equivariant coefficient ring (\autoref{sec:phi} for the precise statements). We then show in~\autoref{theo:density} that when $\ha\calH$ is equipped with a suitable topology, the image of $\Phi$ is dense. The technique of convolution algebras of~\cite{CG} can be applied and yields a correspondence~(\autoref{lemm:simples}) between simple smooth $\ha\calH$-modules and simple perverse sheaves. Finally, making use of the density of the image of $\Phi$, we construct in~\autoref{theo:modHH} an equivalence between the category of smooth $\ha\calH$-modules and $\calO_{\bfx, \eta/2m}(\bfH_{\xi})$, which provides an bijection of simple objects between both categories. These two bijections provide a geometric parametrisation of simple modules in $\calO_{\bfx, \eta/2m}(\bfH_{\xi})$. Our approach to construct $\Phi$ is via inducing the map~\eqref{equa:EML} from all parabolic subalgebras of $\bfH_{\xi}$, which are graded affine Hecke algebras. The proof of the density of $\Phi$ involves analysing the geometry of the Steinberg varieties. We refer to~\autoref{sec:strat} for an explanation of the strategy. \par

\subsection*{Related works}
The case where $M$ is a maximal torus and $\scrC$ is trivial has previously been studied by Vasserot in~\cite{vasserot05} using equivariant K-homology over Kashiwara's flag manifold. The double affine Hecke algebras (DAHA) obtained are those with equal parameters. \par

In~\cite{VV09}, M. Varagnolo and E. Vasserot used the action of DAHA constructed in~\cite{vasserot05} to classify the {\itshape positive slopes} for which the spherical simple module of the dDAHA (with equal parameters) is finite dimensional. The Springer fibres involved are those which are over the regular semisimple locus of the affine Lie algebra. Our classification result~\autoref{theo:classification} works essentially for {\itshape negative slopes}, where we consider torus fixed-points of Springer fibres over nilpotent elements of the affine Lie algebra. Nevertheless, with our construction of the map $\Phi$, which works equally well for positive slopes, one can hope for a generalisation of the result of~\cite{VV09} to dDAHAs at unequal parameters. We hope to study the relation between these two opposite cases in a future work. \par

In~\cite{Yun11}, Z. Yun constructed an action of the dDAHA (with equal parameters) using parabolic Hitchin moduli spaces of principal bundles, which is a global version of the affine Springer resolution. This result is later used in~\cite{OY16} to further geometrise the results of~\cite{VV09}. One can hope for a generalisation of these results with cuspidal local systems and dDAHAs with unequal parameters. \par

\subsection*{Organisation of the article}

The sections~\autoref{sec:ZgrAHA}--\autoref{sec:affineCox} are mostly a recollection of previously known results. We reformulate them in a slightly different language. \par

In~\autoref{sec:ZgrAHA}, we review the sheaf-theoretic construction of graded AHAs given in~\cite{lusztig88},~\cite{EM} and~\cite{lusztig95}. We prove in~\autoref{prop:compatibleZ} that this construction is compatible with parabolic induction. \par

In~\autoref{sec:strat}, we explain the relation between the loop Lie algebras and $\bfZ/m$-graded Lie algebra as well as the relation between affine Springer resolution and the spirals. We sketch the construction of $\Phi$ and the strategy of the proof of the density of $\Phi$. 

In~\autoref{sec:grad}, we review the geometric setup of $\bfZ/m$-graded Lie algebras introduced by Lusztig--Yun. We recall the part of their results that is important for the purpose of this article. In particular, we recall the notion of spirals and spiral induction. \par

In~\autoref{sec:affineCox}, we review the Coxeter complex, relative root system and relative affine Weyl group introduced in \cite{LYIII}. We discuss their relation with spirals. \par

In~\autoref{sec:phi}, we define the dDAHA $\bfH_{\xi}$ and the completed extension algebra $\ha\calH$. We construct also the homomorphism $\Phi:\bfH_{\xi}\to \ha\calH$. This is done by inducing the map $\Phi$ from parabolic subalgebras of $\bfH_{\xi}$ by means of spiral induction. \par

The sections~\autoref{sec:geomZ} and~\autoref{sec:convolution} are preparation for the proof of the density theorem~\autoref{theo:density}. \par

In~\autoref{sec:geomZ}, we describe our principal geometric objects: varieties $\calX^{\nu, \nu'}$, $\calT^{\nu}$ and $\calZ^{\nu,\nu'}$, which play the role of fixed point components of the product of partial flag variety, the partial affine Springer resolution and the Steinberg variety, respectively. The filtration by Bruhat order of the varieties $\calZ^{\nu,\nu'}$ will be important in the analysis of the convolution algebra. We prove in \autoref{prop:strZ} that the convolution product on $\calZ^{\nu,\nu'}$ respects the Bruhat order and we describe the good Bruhat strata. \par

In~\autoref{sec:convolution}, we interpret the algebra $\ha\calH$ as a convolution algebra. The main technical results are~\autoref{lemm:isostr} and~\autoref{prop:imagepsi}, which will play crucial r\^oles in the proof of~\autoref{theo:density} \par

In~\autoref{sec:density}, making use of the calculations of~\autoref{sec:convolution}, we show that the image of $\Phi$ is dense. \par

In~\autoref{sec:simple}, we derive some consequences of the density of $\Phi$. In particular, we establish the multiplicity-one conjecture of~\cite{LYIII}. Simple integrable $\bfH_{\xi}$-modules with prescribed eigenvalues are classified in~\autoref{theo:classification}. We also relate simple modules with the cohomology of torus fixed points of affine Springer fibres in~\autoref{theo:standard}.\par

In~\autoref{sec:exem}, we work out the correspondence stated in~\autoref{theo:classification} in a special case. \par

In~\autoref{sec:twisted}, we explain the modifications needed for {\itshape twisted} affine root systems.

\subsection*{Acknowledgement}
The author is grateful to E. Vasserot, under whose supervision this article is written. He feels also indebted to the referees, who have read this paper with great patience and have made an enormous number of corrections and precious suggestions. He would also like to thank Oya H., Tsai C.-C. and R. Walker for useful exchanges. 

\section*{Convention and notation}
All schemes and algebraic stacks concerned will be over the field of complex number $\bfC$, which can be replaced by any algebraically closed field of characteristic $0$ or of sufficiently large characteristic (depending on the type of $G$). \par
For any algebraic stack $\calX$, we denote by $\Db\left( \calX \right)$ \index{D@$\Db\left( X \right)$} the bounded derived category of constructible $\ba\bfQ_{\ell}$-sheaves on $\calX$ defined by Laszlo--Olsson~\cite{laszlo+olsson08ii}. We denote $\bfk = \ba\bfQ_{\ell}$. \par

For any algebraic group $G$ acting on $X$ (on the left), we denote by $\Db_G\left( X \right) = \Db\left( \left[G\backslash X\right] \right)$ the $G$-equivariant bounded derived category of $\bfk$-sheaves on $X$, or equivalently bounded derived category of $\bfk$-sheaves on the quotient stack $\left[ G\backslash X \right]$. We denote $\Perv_G(X)\subseteq\Db_G\left( X \right)$ \index{Perv@$\Perv_G(X)$} the subcategory of complexes whose image in $\Db(X)$ is perverse ; those are perverse sheaves on the stack $[G\backslash X]$ up to a shift. Denote by $\rmH^*_G = \rmH^*(\rmB G, \bfk)$ the coefficient ring of the $G$-equivariant cohomology. For objects $\scrF, \scrG\in \Db_G(X)$, we put $\Ext^n_G(\scrF, \scrG) = \Hom_{\Db_G(X)}(\scrF, \scrG[n])$ and $\Ext^*_G(\scrF, \scrG) = \bigoplus_{n\in \bfZ}\Ext^n_{G}(\scrF, \scrG)$. \par

On these derived categories, the six operations $\otimes, \scrHom, f_*, f^*, f_!, f^!$ will be understood as derived functors. We suppress the symbols $\rmR$ and $\rmL$ from $\rmR f_*$, $\otimes^{\rmL}$, etc. The bi-duality functor will be denoted $\bfD$, the perverse cohomology functors will be denoted $\pH^k$\index{H@$\pH^k$} for $k\in \bfZ$ and for any local system $\scrL$ supported on some locally closed subset, its intersection complex will be denoted $\IC(\scrL)\in \Perv_G(X)$\index{IC@$\IC(\scrL)$}.\par

When a capital Latin letter ($A, B, C, \ldots$) denote an algebraic group, we usually use the corresponding Fraktur small letter ($\fraka, \frakb, \frakc, \ldots$) to denote its Lie algebra. When $G$ is an algebraic group, we denote by $Z(G)$ or $Z_G$\index{Z@$Z(G), Z_G$} its centre and by $\frakz_G$ or $\frakz(\frakg)$\index{z@$\frakz_G, \frakz(\frakg)$} the centre of its Lie algebra. \par

For any algebraic group $G$, the set of one-parameter subgroups (= cocharacters) is denoted by $\bfX_*(G)$ \index{X@$\bfX_*(G)$}. We will adopt the notion of fractional cocharacters
\[
	\bfX_*(G)_{\bfQ} = \left\{ \mu / r\;;\; \mu\in \bfX_*(G),\quad r\in \bfZ_{\ge 1} \right\} / \left((\mu,r )\sim (\mu',r') \Leftrightarrow r'\mu = r\mu'\right).
\]
\index{X@$\bfX_*(G)_{\bfQ}$} We will also consider weight spaces of elements of $\bfX_*(G)_{\bfQ}$. If $\rho:G\to \Aut(V)$ is a rational representation and if $\mu\in\bfX_*(G)_{\bfQ}$, then for each $r\in \bfQ$ the weight space of $\mu$ of weight $r$ in $V$ is denoted by $\prescript{\mu}{r}V$. Namely, let $k\in \bfZ_{\ge 1}$ such that $k\mu\in \bfX_*(G)$ and $kr\in \bfZ$, then 
\begin{equation*}\begin{aligned}
	\prescript{\mu}{r}V = \left\{ v\in V\;;\;\Ad_{(k\mu)(t)}(v) = t^{kr}v, \forall t\in \bfC^{\times} \right\}.
\end{aligned}\end{equation*} \index{0@$\prescript{\mu}{r}V$}
Similarly, if $G$ acts on a space $X$, we let $X^{\mu}$ denote the fixed point of the cocharacter $k\mu\in \bfX_*(G)$.  \par

For any pair of adjoint functors $(L, R)$, we denote by $\id\to RL$ the adjunction unit and by $LR\to \id$ the adjunction co-unit.

\section{\texorpdfstring{$\bfZ$}{Z}-graded Lie algebras and graded AHAs revisited}\label{sec:ZgrAHA}

We recall the works of Lusztig~\cite{lusztig84}~\cite{lusztig95}~\cite{lusztig95b} and Evens--Mirkovi\'c~\cite{EM} on cuspidal local systems and graded affine Hecke algebras. At the end of this section, we prove in~\autoref{prop:compatibleZ} that their constructions are compatible with the parabolic induction.

\subsection{Cuspidal local systems and AHAs}\label{subsec:rappel}
Let $G$ be a connected reductive algebraic group over $\bfC$ and $M\subset G$ a Levi subgroup. Assume that there is a nilpotent $M$-orbit $\rmO\subset \frakm^{\nil}$ and a cuspidal $M$-equivariant local system $\scrC$ on~$\rmO$ in the sense of~\cite{lusztig84}. The centre $Z_M$ of $M$ may be disconnected but the neutral component $Z_M^{\circ}$ is a torus. To such a datum $\xi = (M, \rmO, \scrC)$, Lusztig associated in~\cite[2.4]{lusztig88} a {\itshape relative root system} $(\frakz_{M, \bfQ}, R_{\xi})$ on the space $\frakz_{M,\bfQ} = \bfX_*(Z_M)\otimes \bfQ$ equipped with the inner product induced from the Killing form of $G$. Let $R'_{\xi}\subset \frakz_{M, \bfQ}^*$ be the set of non-zero weights of $Z_M^{\circ}$ which appears in $\frakg$. The pair $(\frakz_{M, \bfQ}, R_{\xi}')$ is a possibly non-reduced root system. The set of roots $R_{\xi}$ is the subset of $R'_{\xi}$ consisting of indivisible roots. \par

The {\itshape relative Weyl group} is defined to be $W_{\xi} = N_G(Z_M^{\circ}) / M$. It coincides with the Weyl group of $(\frakz_{M, \bfQ}, R_{\xi})$. It is shown in~\cite[9.2]{lusztig84} that the adjoint action of $N_G(Z_M^{\circ})$ on $\frakm$ preserves the nilpotent orbit $\rmO$ and the cuspidal local system $\scrC$ has a $N_G(Z_M^{\circ})$-equivariant structure. When $M$ is a maximal torus of $G$, the relative root system $(\frakz_{M, \bfQ}, R_{\xi})$ is reduced to the usual root system of $G$. \par

Let $\Cq = \bfC^{\times}$ be the multiplicative group which acts on $\frakg$ by weight -2 and trivially on $G$. For any group $H$, we put $H_q = H\times \Cq$ unless it is defined otherwise. The $M$-equivariant local system $\scrC$ admits a unique $M_q$-equivariant enhancement. Choose any parabolic subalgebra $P\subset G$ containing $M$ as Levi factor and denote by $\pi:\frakp\to \frakm$ the projection. Let 
\[
	\dot\frakg = G\times^P \pi^{-1}(\rmO)\xrightarrow{a} \frakg^{\nil},\quad \ddot\frakg = \dot\frakg\times_{\frakg}\dot\frakg\xhookrightarrow{(p_1, p_2)} \dot\frakg\times \dot\frakg
\]
be the partial Springer resolution and the Steinberg variety. One extends $\scrC$ to a $G\times\Cq$-equivariant local system on $\dot\frakg$. The cleanness of cuspidal local systems~\cite{lusztig94} implies that $a_!\dot\scrC\cong a_*\dot\scrC$. By the Verdier duality, there is an isomorphism
\begin{equation*}\begin{aligned}
	\Ext^*_{G_q}(a_*\dot\scrC, a_*\dot\scrC) \cong \Ext^*_{G_q}(p_2^*\dot\scrC, p_1^!\dot\scrC).
\end{aligned}\end{equation*}
Using the cleanness of $\scrC$, in~\cite{EM} and~\cite{lusztig95} the authors defined a convolution product on $\Ext^*_{G_q}(p_2^*\dot\scrC, p_1^!\dot\scrC)$ which agrees with the Yoneda product of the left-hand side of the above isomorphism and they constructed a ring isomorphism 
\begin{equation}\begin{aligned}\label{equa:isomEM}
	\underline\bfH_{\xi} \xrightarrow{\cong} \Ext^*_{G_q}(p_2^*\dot\scrC, p_1^!\dot\scrC).
\end{aligned}\end{equation}
Here $\underline\bfH_{\xi}$\index{H@$\underline\bfH_{\xi}$} is the {\itshape graded affine Hecke algebra} attached to the datum $\xi = (M, \rmO, \scrC)$ introduced in~\cite{lusztig88}. \par

We recall here the definition of $\underline\bfH_{\xi}$. We pick as above a parabolic subgroup $P\subset G$ which contains $M$ as Levi factor. The parabolic $P$ yields a set of positive roots $R_{\xi}^{+, P} = \left\{\alpha\in R_{\xi}\;;\; \frakp_\alpha \neq \emptyset \right\}$ and a base $\Delta^P_{\xi}\subset R_{\xi}^{+,P}$. The relative Weyl group $W_{\xi}$ becomes a Coxeter group with the set of generators $\langle s_{\alpha} \;;\; \alpha\in \Delta^P_{\xi}\rangle$. The graded affine Hecke algebra $\underline\bfH_{\xi}^P$ associated with the datum $(\xi, \Delta^P_{\xi})$ is the associative algebra over the polynomial ring $\bfk[u]$ generated by the sets $\left\{ x^{\mu} \right\}_{\mu\in \bfX^*(Z^{\circ}_{M})}$ and $\left\{ s_{\alpha} \right\}_{\alpha\in \Delta^P_{\xi}}$ subject to the following relations:
\begin{equation*}\begin{aligned}
	x^{\mu} + x^{\nu} = x^{\mu + \nu},\quad
	\bfk[ s_{\alpha}\;;\; \alpha\in \Delta^P_{\xi}] \cong \bfk W_\xi \\
	s_{\alpha}x^{\mu} - x^{s_\alpha(\mu)}s_{\alpha} = u c_{\alpha} \langle \mu,\alpha^{\vee}\rangle. \\
\end{aligned}\end{equation*}
where $c_{\alpha}\in \bfZ_{\ge 2}$ is a constant defined in~\cite[2.10]{lusztig88}, which depends on $\xi$ (see also~\autoref{subsec:daha}). In particular, the set $\{x^{\mu}\}_{\mu\in \bfX^*(Z^{\circ}_M)}$ generates in $\underline\bfH_{\xi}$ a polynomial subalgebra $\bfk[\frakz_{M,\bfk}] = \Sym \frakz_{M,\bfk}^*$, where $\frakz_{M,\bfk} = \bfX_*(Z_M^{\circ})\otimes \bfk$, $\frakz^*_{M,\bfk} = \bfX^*(Z_M^{\circ})\otimes\bfk$. If $P$ and $P'$ are two parabolic subgroups of $G$ which have $M$ as Levi factor, then there is an element $\dot w\in N_G(Z^{\circ}_M)$ such that $\dot w P \dot w^{-1} = P'$. The image of $\dot w$ in the quotient $W_{\xi}$, denoted by $w$, then yields an isomorphism of based root system $(\frakz_{M, \bfQ}, \Delta^P_{\xi})\cong (\frakz_{M, \bfQ}, \Delta^{P'}_{\xi})$ and hence a (canonical) isomorphism of algebras $\underline\bfH^P_{\xi}\cong \underline\bfH^{P'}_{\xi}$. The graded affine Hecke algebra attached to $\xi$ is the inverse limit $\underline\bfH_{\xi} = \varprojlim_{P} \underline\bfH^P_{\xi}$, so that for each choice of parabolic $P$, there is an canonical isomorphism $\underline\bfH_{\xi}\cong \underline\bfH^{P}_{\xi}$.  \par

According to the eigenvalues of the action of the central subalgebra $\bfk[\frakz_{M,\bfk}]^{W_{\xi}}\otimes \bfk[u]\subset \underline\bfH_{\xi}$, the category of finite-dimensional $\underline\bfH_{\xi}$-modules can be decomposed into blocks:
\[
	\underline\bfH_{\xi}\mof_{\mathrm{fd}} = \bigoplus_{r\in \bfk}\;\bigoplus_{x\in \frakz_{M,\bfk} / W_{\xi}}\underline\bfH_{\xi}\mof_{(x, r)}.
\]

\subsection{Simple modules of the graded AHA}\label{subsec:simpleAHA}
Fix $\xi$ as in~\autoref{subsec:rappel}. We shall explain the Deligne--Langlands--Lusztig parametrisation of simple $\ubar\bfH_{\xi}$-modules. \par
Let $\lambda_0\in \bfX_*(G)$ and $\eta\in \bfZ$. Put $\lambdaq = (\lambda_0, \eta/2)\in \bfX_*(M_q)_{\bfQ}$. Assume that $\frakm^{\lambda_q} \cap  \rmO\neq\emptyset$. We have the following commutative square: \\
\begin{minipage}[m]{.5\linewidth}
	\[
		\begin{tikzcd}[column sep = 20pt, row sep=15pt]
			\dot\frakg^{\xr}\arrow{d}{a}\arrow[hookrightarrow]{r}{\dot\rho} & \dot\frakg \arrow{d}{a}\\
			\frakg^{\xr}\arrow[hookrightarrow]{r}{\rho} &\frakg\\
		\end{tikzcd}
	\]
\end{minipage}
\begin{minipage}[m]{.5\linewidth}
	$\dot\frakg =  G\times^P\pi^{-1}(\rmO) \xrightarrow{a} \frakg$ : partial Springer resolution  \\
	$\frakg^{\xr}, \dot\frakg^{\xr}$ : $\xr$-fixed points \\
	$\rho, \dot\rho$: inclusions of $\xr$-fixed points \\
\end{minipage}
The following equivariant localisation formula is proven in~\cite[4.10]{EM}:
\begin{equation*}\begin{aligned}\label{equa:isoExt}
	\Ext^*_{G_q}(a_*\dot\scrC, a_*\dot\scrC)_{\xr} \cong \Ext^*_{G^{\lambda_0}_q}\left(a_*\dot\rho^*\dot\scrC, a_*\dot\rho^*\dot\scrC\right)_{0},
\end{aligned}\end{equation*}
where the subscript $\xr$ means the completion of the extension algebra at $\xr$, equipped with the adic topology and the subscript $0$ means the completion at the augmentation ideal $\frako = \rmH^{>0}_{G^{\lambda_0}_q}$. We will explain in~\autoref{subsec:compl} about the completion. Combining this with~\eqref{equa:isomEM}, we obtain an injective ring map with dense image
\begin{equation}\begin{aligned}\label{equa:locEM}
	\Phi: \underline\bfH_{\xi}\to  \Ext^*_{G^{\lambda_0}_q}\left(a_*\dot\rho^*\dot\scrC, a_*\dot\rho^*\dot\scrC\right)_{0}=: \ha\calH.
\end{aligned}\end{equation}
\begin{rema}
	\begin{enumerate}
		\item
			In fact, the formula~\eqref{equa:isoExt} was stated and proven in terms of ``quotient by $\frako$'' rather than ``completion at $\frako$''. Nevertheless, its proof can be easily adapted for completion. 
		\item
			Instead of completion at $\xr$, one can take completion at $(\lambda_0/m,\eta/2m)$ for any $m\in \bfk^{\times}$ and the resulting completions will be isomorphic. In particular, we may take the completion at any fractional cocharacters.
	\end{enumerate}
\end{rema}
Choose an $\fraksl_2$-triple $\phi = (e, h, f)$ in $\frakm$ with $e\in \frakm^{\lambda_q}\cap\rmO$, $h\in\prescript{\lambda_0}{0}\frakm$ and $f\in \prescript{\lambda_0}{-\eta}\frakm$ and set 
\begin{equation}\begin{aligned}\label{equa:Mphi}
	Z^q_{M}(\phi)= \left\{ (g, q)\in M\times\Cq \;;\; \Ad_g(e) = q^2 e,\; \Ad_g(h) = h,\; \Ad_g(f) = q^{-2}f\right\}. 
\end{aligned}\end{equation}
The group $Z^q_M(\phi)$\index{Z@$Z^q_M(\phi)$} is commutative and there is an isomorphism
\begin{equation*}\begin{aligned}
	\iota:Z_M \times \bfC^{\times} &\xrightarrow{\sim} Z^q_{M}(\phi)  \\
	(g, q) &\mapsto (g q^{h}, q).
\end{aligned}\end{equation*}\footnote{This isomorphism is technical but extremely important in the calculation of the equivariant cohomology with cuspidal local systems. We will invoke it several times.}
The image of the cocharacter $\lambdaq\in \bfX_*(M_q)_{\bfQ}$ lies in $Z^q_M(\phi)$, so we can put $(\lambda^M, \eta/2) = \iota^*\lambda_q$ with $\lambda^M\in \bfX_*(Z_M)_{\bfQ}$\index{lambda@$\lambda^M$}. The element $\lambda^M$ does not depend on the choice of $\phi$.

By the general theory of extension algebras (see \cite[Ch 8]{CG}), the simple $\ha\calH$-modules are indexed by the simple constituents of the perverse cohomology $\pH a_*\dot\rho^*\dot\scrC$. On the other hand, the map $\Phi$ induces an equivalence between $\underline\bfH_{\xi}\mof_{(\lambda^M, \eta/2)}$ and the category of finite-dimensional $\ha\calH$-modules. These considerations together yield the following geometric parametrisation: 
\begin{theo}[{\cite[8.14]{lusztig95}}]\label{theo:AHA}
	The ring map $\Phi$ induces a natural bijection
	\begin{equation*}\begin{aligned}
			\Irr(\underline\bfH_{\xi}\mof_{(\lambda^M, \eta/2)})\quad\xlongrightarrow{\sim} \quad\left\{\text{simple perverse sheaf $\subseteq \bigoplus_{k\in \bfZ}\pH^k a_*\dot\rho^*\dot\scrC$}\right\}/\sim.
	\end{aligned}\end{equation*}
\end{theo}

\subsection{\texorpdfstring{$\bfZ$}{Z}-graded Lie algebras}\label{subsec:Zgr}
In~\cite{lusztig95b}, Lusztig introduced a geometric framework for studying the fixed-point variety $\dot\frakg^{\lambda_q}$ and the complex $a_*\dot\rho^*\dot\scrC$ that appears in the previous theorem. We briefly recall the basic constructions. \par

Let $G$ be a complex connected connected reductive group as above and let $\lambda_0\in \bfX_*(G)$ be a cocharacter. Then $\lambda_0$ gives rise to a $\bfZ$-grading on the Lie algebra : 
\[
	\frakg = \bigoplus_{n\in \bfZ}\frakg_n,\quad \frakg_n = \prescript{\lambda_0}{n}\frakg = \left\{ x\in \frakg\;;\; \Ad_{\lambda_0(t)}x = t^n x,\quad \forall t\in \bfC^{\times} \right\}. 
\]
Let $G_0 = G^{\lambda_0}$ denote the subgroup of $\lambda_0$-fixed points of $G$. For any subset $\Sigma\subset G$, we will write $\Sigma_0 = \Sigma\cap G_0$ and for any subset $\Sigma\subset \frakg$, we write $U_n = U\cap \frakg_{n}$.  \par

Let $\eta\in \bfZ_{\neq 0}$. We have $\frakg_{\eta}\subset \frakg^{\nil}$ and the adjoint action of $G_{0}$ on $\frakg_{\eta}$ has a finite number of orbits. Let $P \subset G$ be a $\lambda_0$-stable parabolic subgroup and let $P = L\ltimes U$ be a $\lambda_0$-stable Levi decomposition. The inclusion $\frakp_\eta\subset \frakg_\eta$ and the projection $\frakp_\eta\to \frakp_{\eta} / \fraku_{\eta} \cong\frakl_\eta$ yield a diagram of algebraic stacks
\begin{equation*}\begin{aligned}
	\left[ L_0 \backslash \frakl_{\eta} \right] \xleftarrow{b} \left[ P_0 \backslash \frakp_{\eta} \right] \xrightarrow{a} \left[ G_0 \backslash \frakg_{\eta} \right].
\end{aligned}\end{equation*}
The $\bfZ$-graded parabolic induction functor is defined by
\begin{equation*}\begin{aligned}
	\Ind^{\frakg_{\eta}}_{\frakp_{\eta}} = a_*\circ b^* : \Db_{L_{0,q}}(\frakl_{\eta})\to \Db_{G_{0,q}}(\frakg_{\eta}). 
\end{aligned}\end{equation*}\index{Ind@$\Ind^{\frakg_{\eta}}_{\frakp_{\eta}}$}
The induction functor satisfies the transitivity: for every $\lambda_0$-stable parabolic subgroup $Q \subset L$, there is an isomorphisms of functors 
\begin{equation*}\begin{aligned}
	\Ind^{\frakg_{\eta}}_{\frakp_{\eta}}\circ \Ind^{\frakl_{\eta}}_{\frakq_{\eta}} \cong \Ind^{\frakg_{\eta}}_{(\frakq\oplus \fraku)_{\eta}}.
\end{aligned}\end{equation*}

A {\itshape cuspidal pair} $(\rmO, \scrC)$ on $\frakg_\eta$ is a cuspidal pair on $\frakg$ such that $\rmO\cap \frakg_{\eta}\neq \emptyset$. If $(\rmO, \scrC)$ is a cuspidal pair, it happens that $\rmO_{\eta} = \rmO\cap \frakg_{\eta}$ must be the unique open $G_0$ orbit in $\frakg_{\eta}$ and for every $e\in \rmO_{\eta}$, the inclusion $Z_{G_0}(e)\subset Z_{G}(e)$ induces an isomorphism on the component groups, see~\cite[\S4]{lusztig95b}. It follows that the restriction $\scrC_{\eta} := \scrC\mid_{\rmO}$ is irreducible, and conversely, $\scrC$ is the unique $G$-equivariant extension of $\scrC_{\eta}$ on $\rmO$. An {\itshape admissible system} on $\frakg_{\eta}$ is a datum $(M, \rmO, \scrC)$ of a $\lambda_0$-stable Levi subgroup $M\subset G$ with Lie algebra $\frakm = \Lie M$ and a cuspidal pair $(\rmO, \scrC)$ on $\frakm_{\eta}$.



We shall analyse the extension algebra in the right-hand side of~\eqref{equa:locEM}. For this sake, we introduce some combinatorial language for the Coxeter complex of $W$, so as to handle the varieties of fixed points $\dot\frakg^{\xr}, \ddot\frakg^{\xr}$.

\subsection{Coxeter complex and canonical relative Weyl group}\label{subsec:coxZ}

Fix a datum $\xi = (M, \rmO, \scrC)$ as in~\autoref{subsec:rappel}. Choose a maximal torus $T\subset M_0$ and set $\frakt_{\bfQ} = \bfX_*(T)_\bfQ = \bfX_*(T)\otimes\bfQ$. Let $R = R(G, T)$ denote the set of roots and let $W = N_G(T) / T$ denote the Weyl group. The root hyperplanes decompose $\frakt_{\bfQ}$ into facets and the facets of maximal dimension are the Weyl chambers. Let $\frakF$ denote the set of facets (more precisely, the root hyperplanes yield a stratification of $\frakt_\bfR = \frakt_{\bfQ}\otimes_{\bfQ} \bfR$ into facets and $\frakF$ is its restriction to $\frakt_\bfQ\subset\frakt_\bfR$). The Weyl group $W$ acts on the set $\frakF$. There is a bijection:
\begin{equation*}\begin{aligned}
		\frakF  &\xlongrightarrow{\sim} & \left\{\text{parabolic subalgebra~} \frakp\subset \frakg\;;\; \frakt\subset \frakp  \right\} \\
		\sigma &\mapsto& \frakp^{\sigma} = \bigoplus_{r\ge 0}\prescript{y}{r}\frakg= \bigoplus_{\substack{\alpha\in \bfX^*(T) \\ \langle\alpha, y\rangle\ge 0}} \frakg_{\alpha}\qquad (y\in \sigma).
\end{aligned}\end{equation*}
For every facet $\sigma\in \frakF$, let $P^{\sigma}$ denote the corresponding parabolic subgroup of $G$ such that $\Lie P^{\sigma} = \frakp^{\sigma}$ and $L^{\sigma}$ its Levi factor containing $T$ with $\frakl^{\sigma} = \Lie L^{\sigma}$. \par

The subspace $\frakz_{M,\bfQ} = \bfX_*(Z_{M})_\bfQ\subset \frakt_{\bfQ}$ is a union of facets in $\frakF$ and its pointwise stabiliser coincides with the Weyl group $W_M = W(M, T)\subset W$. The vanishing locus of each of the relative roots $\alpha\in R_{\xi}$ gives a hyperplane in $\frakz_{M, \bfQ}$. The root hyperplane arrangement of $R_{\xi}$ coincides with the restriction to $\frakz_{M, \bfQ}$ of the root hyperplane arrangement of $R$. The $\frakz_{M,\bfQ}$-chambers $\nu$ are characterised by the property that $\frakl^{\nu} = \frakm$.  Let $\Xi\subset \frakF$ be the subset of $\frakz_{M,\bfQ}$-chambers. The normaliser $N_W(W_M)$ preserves $\frakz_{M,\bfQ}$ and the quotient $N_{W}(W_M) / W_M$ acts on $\frakz_{M,\bfQ}$. It is shown in~\cite[5.9]{lusztig76} that $W_{\xi}\cong N_W(W_M) / W_M$ and it acts simply transitively on $\Xi$.  Let $\Xi'$ be the set facets which are $W$-conjugate to some facet in $\Xi$. Consequently, the Weyl group $W$ acts transitively on $\Xi'$.  \par
\par

Each $\frakz_{M,\bfQ}$-chamber $\nu\in \Xi$ gives rise to a base $\Delta_{\xi}^{\nu}\subset R_{\xi}$. The base $\Delta^{\nu}_{\xi}$ gives a set of Coxeter generators $\{s_{\alpha}\}_{\alpha\in \Delta_{\xi}^{\nu}}$ for $W_{\xi}$. We define the {\itshape canonical relative Weyl group} as $(\calW_{\xi}, \Delta_{\xi}) = \varprojlim_{\nu} ( W_{\xi},\Delta^{\nu}_{\xi} )$. The elements of $\calW_{\xi}$ can written in the form $w = ( w^{\nu} )_{\nu\in \Xi}$, so that for each $y\in W_{\xi}$ and $\nu\in \Xi$ we have $w^{y\nu} = yw^{\nu}y^{-1}$. It is a Coxeter group canonically attached to the admissible system $\xi$. We define also the canonical version of the based root system $\left(\frakh_{\xi,\bfQ}, \Delta_{\xi} \right) = \varprojlim_{\nu} (\frakz_{M,\bfQ}, \Delta^{\nu}_{\xi})$, who has the Coxeter group $(\calW_{\xi}, \Delta_{\xi})$ as Weyl group. In particular, $(\calW_{\xi}, \Delta_{\xi})$ acts on $\frakh_{\xi}$ by orthogonal reflections.  \par

We define a $\calW_{\xi}$-action on $\Xi$ as follows: 
\begin{equation*}\begin{aligned}
		\calW_{\xi}\times \Xi\to \Xi,\quad  (w, \nu)\mapsto (w^{\nu})^{-1}\nu.
\end{aligned}\end{equation*}
This action is simply transitive and commutes with the $W_{\xi}$-action on $\Xi$. Recall the point $\lambda^M\in \frakz_{M,\bfQ}$. It can be shown that $\lambda^M$ is the image of $\lambda_0$ under the orthogonal projection $\frakt_{\bfQ}\to \frakz_{M,\bfQ}$ with respect to the Killing form. Let $W_{\xi,\lambda^M} = \Stab_{W_{\xi}}(\lambda^M)$. We denote $\ubar \Xi = W_{\xi,\lambda^M}\backslash \Xi$. The $\calW_{\xi}$-action on $\Xi$ descends to $\ubar\Xi$. For every $\nu\in \Xi$, let $\lambda_{\nu}\in \frakh_{\xi}$ denote the image of $\lambda^M$ under the isomorphism $(\frakh_{\xi}, \Delta_{\xi})\cong (\frakz_{M,\bfQ}, \Delta^{\nu}_{\xi})$. There is a bijection of $\calW_{\xi}$-sets:
\begin{equation}\begin{aligned}\label{equa:bijXi}
		\ubar \Xi &\xlongrightarrow{\sim} \left\{ \lambda_{\nu} \right\}_{\nu\in \Xi} \\
	\ubar\nu&\mapsto \lambda_{\nu},
\end{aligned}\end{equation}
where $\ubar\nu$ denotes the $W_{\xi,\lambda^M}$-orbit of some $\nu\in \Xi$.
\par

The graded Hecke algebra $\underline\bfH_{\xi}$ from~\autoref{subsec:rappel} can also be defined in terms of $(\frakh_{\xi}, \Delta_{\xi})$ and it contains the polynomial subalgebra $\bfS_{\xi} = \bfk[\frakh_{\xi}] = \Sym(\frakh_{\xi}\otimes \bfk)$ as well as the group ring $\bfC \calW_{\xi}$. It follows immediately that the possible $\bfS_{\xi}$-weights of modules in the block $\underline\bfH_{\xi}\mof_{(\lambda^M, \eta)}$ is $\{\lambda_{\nu}\}_{\ubar\nu\in \ubar\Xi}$, which is in bijection with $\ubar\Xi$. \par

\subsection{Description of the \texorpdfstring{$\xr$}{(λ,η/2)}-fixed components}\label{subsec:compsfixes}
Recall $\xr = (\lambda_0, \eta/2)\in \bfX_*(M\times\Cq)_{\bfQ}$. We choose a maximal torus $T\subset M_0$. It is easy to see that the $\lambda_q$-fixed point set $\frakg^{\xr}$ coincides with the $\eta$-weight space $\frakg_{\eta} = \prescript{\lambda_0}{\eta}\frakg$. \par

Consider the partial flag variety $\calP = G / P$. From \autoref{subsec:coxZ}, we know that there exists a unique $\nu_0\in \Xi$ such that $\frakp^{\nu_0} = \frakp$. For any $\nu\in \Xi'$, we have clearly $P^{\nu} = \dot w_{\nu} P\dot w^{-1}_{\nu}$ for any $\dot w_{\nu}\in N_G(T)$ such that its image $w_{\nu}\in W$ satisfies $w_{\nu}\nu_0 = \nu$. By~\cite{reeder95}, the variety of $\lambda_0$-fixed points of $\calP$ has the following description as disjoint union of connected components:
\begin{equation}\begin{aligned}\label{equa:composantesX}
	\calP^{\lambda_0} \cong \bigsqcup_{\ubar\nu\in W_{\lambda_0}\backslash \Xi'} G_0 / P^{\nu}_0,\quad g\dot w_{\nu}P\mapsfrom gP^{\nu}_0\quad \text{for $g\in G_0$},
\end{aligned}\end{equation}
where $W_{\lambda_0}$ is the stabiliser of $\lambda_0$ in $W$, $P^{\nu}_0 = (P^{\nu})^{\lambda_0}$ and $\dot w_{\nu}\in N_G(T)$ is as above. \par

For every facet $\nu\in \Xi'$, put $\rmO^{\nu} = \Ad_{\dot w_{\nu}}\rmO\subset \frakl^{\nu}$ and $\rmO^{\nu}_{\eta} = \rmO^{\nu}\cap \frakg_{\eta}$. Using the fibration $\dot\frakg\to \calP$, we derive from~\eqref{equa:composantesX} a similar description for the variety of $\xr$-fixed points of $\dot\frakg$:
\begin{equation}\begin{aligned}\label{equa:springer-decomp-i}
	\dot\frakg^{\xr} &\cong \bigsqcup_{\ubar\nu\in W_{\lambda_0}\backslash  \Xi'} \calT^\nu,\quad \calT^\nu = G_0\times^{P^{\nu}_0}(\rmO^{\nu}_\eta\oplus \fraku^{\nu}_{\eta}) \\
	(g\dot w, \Ad_{\dot w^{-1}}(z))&\mapsfrom (g, z) \quad \text{for $g\in G_0,\; z\in \rmO^{\nu}_{\eta}\oplus \fraku^{\nu}_z$}.
\end{aligned}\end{equation}

It is shown in~\cite{LYIII} that $\rmO^{\nu}_{\eta} = \emptyset$ if $\nu$ is not $W_{\lambda_0}$-conjugate to any $\frakz_{M,\bfQ}$-chamber and $\rmO^{\nu} = \rmO$ if $\nu$ is a $\frakz_{M, \bfQ}$-chamber. In the former case, we have $\calT^{\nu} = \emptyset$. Thus we are reduced to consider the subset $\Xi\subset \Xi'$ of $\frakz_{M,\bfQ}$-chambers. The formula~\eqref{equa:springer-decomp-i} can be rewritten as
\begin{equation}\begin{aligned}\label{equa:springer-decomp-i-bis}
	\dot\frakg^{\xr} \cong \bigsqcup_{\ubar\nu\in \ubar\Xi} \calT^\nu,\quad \calT^\nu = G_0\times^{P^{\nu}_0}(\rmO_\eta\oplus \fraku^{\nu}_{\eta}).
\end{aligned}\end{equation}
\par

For every facet $\sigma\in \frakF$, we abbreviate $\Ind_{\sigma} = \Ind^{\frakg_{\eta}}_{\frakp^{\sigma}_{\eta}}$ for the parabolic $\frakp^{\sigma}$ defined in~\autoref{subsec:coxZ}.  \index{Ind@$\Ind_{\sigma}$}
\begin{prop}\label{prop:indZgrad}
	The complex $a_*\dot\rho^*\scrC$ on $\frakg_{\eta}$ can be written in terms of parabolic inductions over graded Lie algebras:
	\begin{equation*}\begin{aligned}
		a_*\dot\rho^*\dot\scrC \cong \bigoplus_{\ubar\nu\in \ubar\Xi}\Ind_{\nu} \scrC_{\eta},
	\end{aligned}\end{equation*}
	where we have identified $\scrC_{\eta}$ with its direct image under the inclusion $\rmO_{\eta}\subset\frakm_{\eta} = \frakl^{\nu}_{\eta}$ for $\nu\in \Xi$. \par
\end{prop}
\begin{proof}
	It follows from~\eqref{equa:springer-decomp-i-bis} that
	\begin{equation*}\begin{aligned}
		a_*\dot\rho^*\dot\scrC \cong \bigoplus_{\ubar\nu\in \ubar\Xi}\Ind_{\nu} \left(\Ad_{\dot w^{-1}_{\nu}}^*\scrC_{\eta}\right),
	\end{aligned}\end{equation*}
	where $\Ad_{\dot w^{-1}_{\nu}}: \rmO \to \rmO$ is the adjoint action of $\dot w^{-1}_{\nu}$ on $\frakm$. As mentioned in~\autoref{subsec:rappel}, the cuspidal local system $\scrC$ has a $N_G(Z^{\circ}_M)$ equivariant structure. Since $\nu\in \Xi$, we must have $\dot w_{\nu}\in N_G(Z^{\circ}_M)$. The equivariance yields an isomorphism $\Ad^*_{\dot w^{-1}_{\nu}}\scrC \cong \scrC$, which restricts to $\Ad^*_{\dot w^{-1}_{\nu}}\scrC_{\eta} \cong \scrC_{\eta}$. 
\end{proof}

\subsection{Completion}\label{subsec:compl}
Recall that the canonical relative Weyl group $(\calW_{\xi}, \Delta_{\xi})$ acts on the vector space $\frakh_{\xi}$. We have the classical Chern--Weil isomorphisms
\begin{equation*}\begin{aligned}
	\Sym(\bfX^*(T\times \Cq)_\bfk)^{W}\cong \rmH^*_{G_q},\quad \Sym(\bfX^*(T\times \Cq)_\bfk)^{W_{\lambda_0}}\cong \rmH^*_{G_{0,q}}. \\
\end{aligned}\end{equation*}
Let $\rmH^*_{G_q,\lambda_{q}}$\index{HG@$\rmH^*_{G_q,\lambda_{q}}$} (resp. $\rmH^*_{G_{0,q},0}$\index{HG@$\rmH^*_{G_{0,q},0}$}) denote the completion of the ring $\rmH^*_{G_q}$ (resp. $\rmH^*_{G_{0,q}}$) at the maximal ideal generated by $f - f(\lambda_q)$ for $f\in\Sym(\bfX^*(T\times \Cq)_\bfk)^{W}$ (resp. at the augmentation ideal $\rmH^{> 0}_{G_{0,q}}$). There is an isomorphism $\rmH^*_{G_q,\lambda_q}\cong \rmH^*_{G_{0,q},0}$ which respects the adic topology on both sides. \par

For any coherent $\rmH^*_{G_q}$-module $\scrM$ (resp. coherent $\rmH^*_{G_{0,q}}$-module $\scrM$), denote 
\[
	\scrM_{\lambda_{q}} = \scrM\otimes_{\rmH^*_{G_q}}\rmH^*_{G_q, \lambda_q}\quad \text{(resp. $\scrM_{0} = \scrM\otimes_{\rmH^*_{G_{0,q}}}\rmH^*_{G_{0,q}, 0}$)}.
\]

By~\autoref{prop:indZgrad}, the homomorphism~\eqref{equa:locEM} can be written as
\begin{equation*}\begin{aligned}
	\Phi: \underline\bfH_{\xi}\to \bigoplus_{\ubar\nu,\ubar\nu'\in \ubar\Xi}\Ext^*_{G_{0,q}}\left(\Ind_{\nu'} \scrC_{\eta}, \Ind_{\nu} \scrC_{\eta} \right)_{0}=:\ha\calH.
\end{aligned}\end{equation*}
This homomorphism satisfies the following remarkable properties:
\begin{itemize}
	\item
		If $\scrM\in \underline\bfH_{\xi}\mof_{(\lambda^M, \eta/2)}$, then the $\underline\bfH_{\xi}$-module structure on $\scrM$ factors through $\Phi$.
	\item
		For each $\ubar\nu\in \ubar\Xi$, the composition
		\[
			\bfS_{\xi}\hookrightarrow \underline\bfH_{\xi} \xrightarrow{\Phi} \ha\calH\to \Ext^*_{G_{0,q}}\left(\Ind_{\nu} \scrC_{\eta}, \Ind_{\nu} \scrC_{\eta} \right) / \rmH^{>0}_{G_{0,q}}=: \scrN
		\]
		makes $\scrN$ into a left (resp. right) $\bfS_{\xi}$-module supported on eigenvalue $(\lambda_{\nu}, \eta/2)\in \frakh_{\xi,\bfk}\times \bfk$.
\end{itemize}
The second point gives a geometric meaning to the combinatorial bijection~\eqref{equa:bijXi}; in other words, for each $\ubar\nu\in \ubar \Xi$, the eigenvalue $(\lambda_{\nu}, \eta/2)$ of the $\bfS_{\xi}$-action is accounted for by the connected component $\calT^{\nu}\subset \dot\frakg^{\lambda_q}$.

\subsection{Compatibility with parabolic induction}
Let $J\subset \Delta_{\xi}$ be a subset. Denote by $\calW_{\xi, J}\subset \calW_{\xi}$ the parabolic subgroup generated by $\{s_{\alpha}\}_{\alpha\in J}$. For any $\calW_{\xi,J}$-orbit $S\in \Xi / \calW_{\xi,J}$, there is a unique $\frakz_{M,\bfQ}$-facet $\sigma$ such that $\ba\sigma = \bigcap_{\nu\in S} \ba\nu$.  Let $\Xi_J\subset \frakF$ denote the set of facets $\sigma$ arising in this way. Then each $\frakz_{M,\bfQ}$-facet belongs to $\Xi_J$ for some $J\subseteq\Delta_{\xi}$. Conversely, for each $\frakz_{M,\bfQ}$-facet $\sigma\in\Xi_J$, we denote by $\Xi^{\sigma}\subset \Xi$ the subset of $\Xi$ consisting of $\frakz_{M,\bfQ}$-chambers whose closure contains $\sigma$. Then $\Xi^{\sigma}$ forms a $\calW_{\xi, J}$-orbit in $\Xi$. This yields a bijection of $W_{\xi}$-sets $\Xi / \calW_{\xi,J}\cong \Xi_J$ for each $J\subset\Delta_{\xi}$. \par
As before, we denote $\ubar \Xi_J = W_{\xi,\lambda^M}\backslash\Xi_J$ and $\ubar \Xi^{\sigma} = W_{\xi,\sigma,\lambda^M}\backslash\Xi^{\sigma}$, where $W_{\xi,\sigma,\lambda^M} = \Stab_{W_{\xi}}(\sigma, \lambda^M)$ is the pointwise stabiliser of $\sigma$ and $\lambda^M$. We denote by $\ubar\sigma$ the image of $\sigma$ in $\ubar \Xi/\calW_{\xi,J}$. 

We define $P^{\nu\le \sigma} = L^{\sigma}\cap P^{\nu}$ for any $\nu\in \Xi^{\sigma}$, so that $P^{\nu\le \sigma}$ is a parabolic subgroup of $L^{\sigma}$ with Levi factor $L^{\nu} = M$. For simplifying the notation, we abbreviate \index{Ind@$\Ind^{\sigma}_{\tau}$}
\[
	\Ind^{\sigma}_{\nu} = \Ind^{\frakl^{\sigma}_{\eta}}_{\frakp^{\sigma\le\nu}_{\eta}}.
\]
The map~\eqref{equa:locEM} for the Levi subgroup $L^{\sigma}$ and the admissible system $\xi$ on $\frakl_{\eta}$ is the following:
\[
	\Phi_{\sigma}: \underline\bfH_{\xi, J}\to  \bigoplus_{\ubar\nu,\ubar\nu'\in \ubar\Xi^\sigma}\Ext^*_{L^{\sigma}_{0,q}}\left(\Ind^{\sigma}_{\nu'}\scrC_{\eta}, \Ind^{\sigma}_{\nu}\scrC_{\eta}\right)_{0}=: \ha\calH_{\sigma},
\]
where the subscript $0$ means the completion at the augmentation ideal $\rmH^{>0}_{L^{\sigma}_{0,q}}$. Taking product over all classes $\ubar\sigma\in \ubar\Xi_J$, we obtain a homomorphism:
\begin{equation*}\begin{aligned}\label{equa:PhiJ}
	\Phi_{J} = \left( \Phi_{\sigma} \right)_{\ubar\sigma\in \ubar\Xi_J}: \underline\bfH_{\xi, J}\to \prod_{\ubar\sigma\in \ubar\Xi_J}\ha\calH_{\sigma}.
\end{aligned}\end{equation*}
On the other hand, the functoriality of the functor $\Ind_{\sigma}$ and the transitivity of parabolic inductions $\Ind_{\sigma}\circ\Ind^{\sigma}_{\nu}\cong \Ind_{\nu}$ yields a map
\[
	\psi_J: \prod_{\ubar\sigma}\ha\calH_{\sigma}  \to \prod_{\ubar\sigma}\bigoplus_{\ubar\nu,\ubar\nu'\in \ubar\Xi^\sigma}\Ext^*_{G_{0,q}}\left(\Ind_{\nu'}\scrC_{\eta}, \Ind_{\nu}\scrC_{\eta}\right)_0\subset \ha\calH.
\]
\begin{prop}\label{prop:compatibleZ}
	The construction of $\Phi$ is compatible with parabolic induction in the sense that $\Phi\mid_{\underline\bfH_{\xi,J}} = \psi_J\circ\Phi_J$.
\end{prop}
\begin{proof}
	Recall the usual (ungraded) parabolic induction functor: for each parabolic subgroup $P\subseteq G$ with Levi factor $M$, the diagram
	\[
		[\frakm / M] \xleftarrow{b} [\frakp / P]\xrightarrow{a} [\frakg / G]
	\]
	gives the induction functor $\Ind^{\frakg}_{\frakp}:= a_*b^*:\Db_{M_q}(\frakm^{\nil})\to \Db_{G_q}(\frakg^{\nil})$. By the $N_G(Z^{\circ}_M)$-equivariance (\autoref{subsec:rappel}) of $\scrC$, for any other parabolic $P'$ having $M$ as Levi factor, there is a canonical choice of isomorphism $\Ind^\frakg_{\frakp'}\scrC\cong\Ind^\frakg_{\frakp}\scrC$; thus we may write $\Ind^{\frakg}_{\frakm} \scrC := \Ind^{\frakg}_{\frakp} \scrC$. Similarly, for any $\sigma\in \Xi_J$, we may write $\Ind^{\frakl^{\sigma}}_{\frakm} \scrC := \Ind^{\frakl^{\sigma}}_{\frakp^{\sigma\le \nu}} \scrC$ by choosing an arbitrary $\nu\in \Xi^{\sigma}$. \par
	Choose any $\sigma\in \Xi_J$ and $\nu\in \Xi^{\sigma}$. It follows from Lusztig's construction~\cite{lusztig88},~\cite{lusztig95} of isomorphism $\underline\bfH_{\xi}\cong \Ext^*_{G_q}(a_* \dot\scrC, a_* \dot\scrC)$ that the following diagram commutes
	\begin{equation}\begin{aligned}\label{equa:HH}
		\begin{tikzcd}
			\underline\bfH_{\xi, J} \arrow[hookrightarrow]{d} \arrow{r}{\cong} & \Ext^*_{L^{\sigma}_q}(\Ind^{\frakl^{\sigma}}_{\frakm} \scrC, \Ind^{\frakl^{\sigma}}_{\frakm} \scrC)\arrow{r}{\Ind} &  \Ext^*_{G_q}(\Ind^{\frakg}_{\frakp^{\sigma}}\Ind^{\frakl^{\sigma}}_{\frakm} \scrC, \Ind^{\frakg}_{\frakp^{\sigma}}\Ind^{\frakl^{\sigma}}_{\frakm} \scrC)\arrow{d}{\cong}\\
			\underline\bfH_{\xi}\arrow{rr}{\cong} & &\Ext^*_{G_q}(\Ind^{\frakg}_{\frakm} \scrC, \Ind^{\frakg}_{\frakm} \scrC) \\
		\end{tikzcd}.
	\end{aligned}\end{equation}
	Denote the composite map of the top-right corner by
	\begin{equation}\begin{aligned}\label{equa:Ind} 
		\Ind^{\frakg}_{\frakp^{\sigma}}: \Ext^*_{L^{\sigma}_q}(\Ind^{\frakl^{\sigma}}_{\frakm} \scrC, \Ind^{\frakl^{\sigma}}_{\frakm} \scrC)\to \Ext^*_{G_q}(\Ind^{\frakg}_{\frakm} \scrC, \Ind^{\frakg}_{\frakm} \scrC).
	\end{aligned}\end{equation}
	For each $\sigma\in \Xi_J$, applying~\autoref{lemm:loc} below and taking conjugation by Weyl group elements, we obtain 
	\[
		\Ext^*_{L^{\sigma}_q}(\Ind^{\frakl^{\sigma}}_{\frakm} \scrC, \Ind^{\frakl^{\sigma}}_{\frakm} \scrC)\otimes_{\rmH^*_{G_q}}\rmH^*_{G_q, \lambda_q}\cong \prod_{\ubar\sigma'\in \ubar\Xi_J}\Ext^*_{L^{\sigma'}_q}(\Ind^{\frakl^{\sigma'}}_{\frakm} \scrC, \Ind^{\frakl^{\sigma'}}_{\frakm} \scrC)_{\lambda_q}.
	\]
	Completing the both sides of~\eqref{equa:Ind} at $\lambda_q$ over $\rmH^*_{G_q}$ and re-inserting it into~\eqref{equa:HH}, we obtain the following commutative diagram:
	\begin{equation*}\begin{aligned}
		\begin{tikzcd}
			\underline\bfH_{\xi, J} \arrow[bend left=15]{rr}{\Phi_J} \arrow[hookrightarrow]{d} \arrow{r} &  \prod_{\ubar\sigma}\Ext^*_{L^{\sigma}_q}(\Ind^{\frakl^{\sigma}}_{\frakm} \scrC, \Ind^{\frakl^{\sigma}}_{\frakm} \scrC)_{\lambda_{q}}\arrow{d}{\left( \Ind^{\frakg}_{\frakp^{\sigma}} \right)_{\ubar\sigma}} \arrow{r}[swap]{\text{loc}} &   \prod_{\ubar\sigma}\ha\calH_{\sigma} \arrow{d}{\psi_J}\\
			\underline\bfH_{\xi}\arrow[bend right=15]{rr}{\Phi}\arrow{r} & \Ext^*_{G_q}(\Ind^{\frakg}_{\frakm} \scrC, \Ind^{\frakg}_{\frakm} \scrC)_{\lambda_q}\arrow{r}{\text{loc}} & \ha\calH \\
		\end{tikzcd},
	\end{aligned}\end{equation*}
	where the homomorphisms denoted by loc are the equivariant localisation from~\cite[4.10]{EM}. This proves the statement $\psi_J\circ\Phi_J = \Phi\mid_{\underline\bfH_{\xi, J}}$. 
\end{proof}
\begin{lemm}\label{lemm:loc}
	For each $\sigma\in \Xi_J$, there is an isomorphism 
	\[
		\Ext^*_{L^{\sigma}_q}(\Ind^{\frakl^{\sigma}}_{\frakm} \scrC, \Ind^{\frakl^{\sigma}}_{\frakm} \scrC)\otimes_{\rmH^*_{G_q}}\rmH^*_{G_q, \lambda_q}\cong \prod_{w}\Ext^*_{L^{\sigma}_q}(\Ind^{\frakl^{\sigma}}_{\frakm} \scrC, \Ind^{\frakl^{\sigma}}_{\frakm} \scrC)_{w^{-1}\lambda_q},
	\]
	where $w$ runs over a complete set of representatives for $W_{\xi,\lambda^M}\backslash W_{\xi} / W_{\xi,\sigma}$ in $W_{\xi}$.  
\end{lemm}
\begin{proof}
	Since $\rmH^*_{L^{\sigma}}\cong\rmH^*_{P^{\sigma}} \cong \rmH^*_{G}(G/P^{\sigma}, \bfk)$, applying the equivariant localisation, we obtain
	\begin{equation}\begin{aligned}\label{equa:complGL}
		\rmH^*_{L^{\sigma}_q}\otimes_{\rmH^*_{G_q}}\rmH^*_{G_q, \lambda_q} &\cong  \rmH^*_{G^{\lambda_0}_q}((G/P^{\sigma})^{\lambda_0}, \bfk)_{\lambda_q} \cong \prod_{w}\rmH^*_{G^{\lambda_0}_q}((G^{\lambda_0}/(\pre{w}P^{\sigma}))^{\lambda_0}, \bfk)_{\lambda_q} \\
		&\cong \prod_{w}\rmH^*_{(L^{w\sigma}_q)^{\lambda_0},\lambda_q} \cong \prod_{w}\rmH^*_{L^{w\sigma}_q,\lambda_q} \cong \prod_{w}\rmH^*_{L^{\sigma}_q,w^{-1}\lambda_q},
	\end{aligned}\end{equation}
	where $w$ runs over a complete set of representatives for $W_{\lambda_0}\backslash W / W_{\sigma}$ in $W = W(G,T)$. Taking tensor product of $\Ext^*_{L^{\sigma}_q}(\Ind^{\frakl^\sigma}_{\frakm}\scrC, \Ind^{\frakl^\sigma}_{\frakm}\scrC)$ with~\eqref{equa:complGL}, we obtain
	\[
		\Ext^*_{L^{\sigma}_q}(\Ind^{\frakl^{\sigma}}_{\frakm} \scrC, \Ind^{\frakl^{\sigma}}_{\frakm} \scrC)\otimes_{\rmH^*_{G_q}}\rmH^*_{G_q, \lambda_q}\cong \prod_{w}\Ext^*_{L^{\sigma}_q}(\Ind^{\frakl^{\sigma}}_{\frakm} \scrC, \Ind^{\frakl^{\sigma}}_{\frakm} \scrC)_{w^{-1}\lambda_q}.
	\]
	It remains to show that for each $w\in W$ such that $W_{\lambda_0}wW_{\sigma}\cap N_W(W_M) = \emptyset$, the corresponding factor in this product vanishes. It is known (see~\cite[4.3]{lusztig88} or~\autoref{lemm:SE} below) that, as $\rmH^*_{L^{\sigma}_q}$-module, the cohomology $\Ext^*_{L^{\sigma}_q}(\Ind^{\frakl^{\sigma}}_{\frakm} \scrC, \Ind^{\frakl^{\sigma}}_{\frakm} \scrC)$ is supported on in image of the following map
	\begin{equation}\begin{aligned}\label{equa:XH}
		\bfX_*(Z^q_{M}(\phi))_{\bfk}\to \bfX_*(T_q)_{\bfk} / W_{\sigma} \cong \Spec \rmH^*_{L^{\sigma}_q},
	\end{aligned}\end{equation}
	where $Z^q_{M}(\phi)\subset M_q$ is defined in the same way as in~\autoref{subsec:simpleAHA} for any choice of $\fraksl_2$-triple $\phi$. Thus $w^{-1}\lambda_q$ lies in this support if and only if there exists $y\in W_{\sigma}$ such that $yw^{-1}\lambda_q\in \bfX_*(Z^q_{M}(\phi))_{\bfQ}$. \par
	Suppose that $w\in W$ is such that $w^{-1}\lambda_q$ lies in the image of~\eqref{equa:XH}. Since $W_{\xi}$ acts transitively on $\Xi$, there exists $v\in W_{\xi}$ such that $v\lambda_q = yw^{-1}\lambda_q$. Since $W_{\xi}\cong N_W(W_M) / W_M$, we may choose any lifting $\dot v\in N_W(W_M)$ of $v$, we see that $y':= \dot v^{-1}yw^{-1}\in W_{\lambda_0}$. Therefore in this case $\dot v^{-1} = y'w y^{-1}\in W_{\lambda_0}w W_{\sigma}\cap N_W(W_M)\neq\emptyset$. It follows that the condition $W_{\lambda_0}wW_{\sigma}\cap N_W(W_M) = \emptyset$ implies $\Ext^*_{L^{\sigma}_q}(\Ind^{\frakl^{\sigma}}_{\frakm} \scrC, \Ind^{\frakl^{\sigma}}_{\frakm} \scrC)_{w^{-1}\lambda_q} = 0$, which concludes the proof.
\end{proof}

\section{Outline of the strategy}\label{sec:strat}
In this section, we explain the strategy to prove the main theorem. The discussion here will be informal and serve as guideline for the sections after.

\subsection{Springer correspondence for dDAHAs : heuristics}\label{subsec:heuristic}
Let $G$ be a simply connected simple algebraic group over $\bfC$ with Lie algebra $\frakg$. Consider the loop group $G_{\aff} = G(\!(\varpi)\!)$ and the loop Lie algebra $\frakg_{\aff} = \frakg(\!(\varpi)\!)$. Let $\Ct = \bfC^{\times}$\index{C@$\Ct$} be the multiplicative group which acts on the ring of formal Laurent series $\bfC(\!(\varpi)\!)$ by $(t, \varpi)\mapsto t \varpi$. It induces a $\Ct$-action on $G_{\aff}$ and on $\frakg_{\aff}$. Let $\Cq = \bfC^{\times}$\index{C@$\Cq$} be the multiplicative group which acts on $\frakg_{\aff}$ by weight $-2$ and trivially on $G_{\aff}$. For any $\Ct$-stable subgroup $G'\subset G_{\aff}$, we denote $G'_{\diamond} = G'\rtimes \Ct$ and $G'_{\diamond, q} = G'_{\diamond}\times \Cq$. Hereafter, the subscript $\diamond$ will indicate the presence of the loop-rotation torus $\Ct$.
\par

Suppose that we have a $\Ct$-stable parahoric subgroup $P\subset G_{\aff}$ with $\Ct$-stable Levi factor $M\subset P$ and an $M_{\diamond}$-equivariant cuspidal local system $\scrC$ on a nilpotent $M_{\diamond}$-orbit $\rmO\subset \frakm^{\nil}$. Denote by $\pi: \frakp\to \frakm$ the projection. One would like to perform the same constructions on the extension algebra of the induced complex $\Ind^{\frakg_{\aff}}_{\frakp} \scrC$ on the loop Lie algebra $\frakg_{\aff}$. Imitating the case of graded affine Hecke algebras, one defines the partial Springer resolution
\[
	\dot\frakg_{\aff} =  G_{\aff}\times^P \pi^{-1}(\rmO)\xrightarrow{a} \frakg_{\aff}
\]
and extends $\scrC$ to a $ G_{\aff,\diamond,q}$-equivariant local system $\dot\scrC$ on $\dot\frakg_{\aff}$. Fix such a system $\xi = (G_{\aff}, P, M, \rmO, \scrC)$. One would hope to carry out the same constructions as~\autoref{sec:ZgrAHA} in this affine context with $\xi$. \par

Let $\ddot\frakg_{\aff} = \dot\frakg_{\aff}\times_{\frakg_{\aff}}\dot\frakg_{\aff}$ and let $p_1, p_2:\ddot\frakg_{\aff}\to \dot\frakg_{\aff}$ be the projections. By the Verdier duality, the extension algebra $\Ext^*_{ G_{\aff,\diamond,q}}(\Ind^{\frakg_{\aff}}_{\frakp}\scrC, \Ind^{\frakg_{\aff}}_{\frakp}\scrC)$ would then be isomorphic to $\Ext^*_{ G_{\aff,\diamond,q}}(p_1^*\dot\scrC, p_2^!\dot\scrC)$ equipped with an appropriate convolution product. One would then need to construct a ring homomorphism from the dDAHA $\bfH_{\xi}$ (to be defined in \autoref{subsec:daha}) to the convolution algebra $\Ext^*_{ G_{\aff,\diamond,q}}(p_1^*\dot\scrC, p_2^!\dot\scrC)$. \par

Let $\phi=(e, h, f)$ be a $\fraksl_2$-triple in $\frakm$ such that $e\in \rmO$ and let $Z^q_{M_{\diamond}}(\phi)$ be the group defined in~\eqref{equa:Mphi} so that there is an isomorphism 
\begin{equation*}\begin{aligned}
	\iota: Z_{M_\diamond}\times \bfC^{\times} \cong Z^q_{M_\diamond}(\phi) \\
	(g, q) \mapsto (g q^h, q).
\end{aligned}\end{equation*}
Fix $\lambda^M\in \bfX_*(Z_{M_\diamond})$ and $\eta\in \bfZ_{\neq 0}$. Write $\lambdatq = \iota_*(\lambda^M,\eta/2)\in \bfX_*(Z^q_{M_\diamond}(\phi))_{\bfQ}$. Let $\delta\in \bfX^*(G_{\aff,\diamond})$ denote the projection $G_{\aff,\diamond}\to \Ct$ and let $m = \delta(\lambda^M)\in \bfZ$. We assume that $\eta\neq 0$ and $m>0$. \footnote{The condition $m \neq 0$ is to ensure that the fixed components are algebraic varieties and is crucial in our approach. The condition $\eta \neq 0$ is not essential but the case $\eta= 0$ cannot be treated uniformly. For this reason we exclude it.}
One considers the following diagram: \\
\begin{minipage}[m]{.5\linewidth}
	\[
		\begin{tikzcd}[column sep = 20pt, row sep=15pt]
			\dot\frakg^{\lambdatq}_{\aff}\arrow{d}{a}\arrow[hookrightarrow]{r}{\dot\rho} & \dot\frakg_{\aff} \arrow{d}{a}\\
			\frakg^{\lambdatq}_{\aff}\arrow[hookrightarrow]{r}{\rho} &\frakg_{\aff}\\
		\end{tikzcd}
	\]
\end{minipage}
\begin{minipage}[m]{.5\linewidth}
	$\dot\frakg_{\aff} =  G_{\aff}\times^P \pi^{-1}(\rmO)\xrightarrow{a} \frakg_{\aff}$ : affine partial Springer resolution  \\
	$\frakg^{\lambdatq}_{\aff}, \dot\frakg^{\lambdatq}_{\aff}$ : $\lambda_q$-fixed points \\
	$\rho, \dot\rho$: inclusions of $\lambda_q$-fixed points \\
\end{minipage}

Finally, one would hope to apply the equivariant localisation to $\Ext^*_{ G_{\aff,\diamond,q}}(p_1^*\dot\scrC, p_2^!\dot\scrC)$ to show that
\[
	\Ext^*_{ G_{\aff,\diamond,q}}(a_*\dot\scrC, a_*\dot\scrC)_{\lambda_{\diamond,q}}\cong \Ext^*_{ G^{\lambdatq}_{\aff,\diamond,q}}\left(a_*\dot\rho^*\dot\scrC, a_*\dot\rho^*\dot\scrC\right)_{0}
\]
and to obtain a injective ring homomorphism with dense image:
\begin{equation}\begin{aligned}\label{equa:locDAHA}
		\Phi: \bfH_{\xi}\to \Ext^*_{ G^{\lambdatq}_{\aff,\diamond,q}}\left(a_*\dot\rho^*\dot\scrC, a_*\dot\rho^*\dot\scrC\right)_{0}.
\end{aligned}\end{equation}
This would yield an analogue of~\autoref{theo:AHA} for the dDAHA $\bfH_{\xi}$. \par

However, we will not construct the homomorphism $\Phi$ in~\eqref{equa:locDAHA} from an isomorphism $\bfH_{\xi}\cong \Ext^*_{ G_{\aff,\diamond,q}}\left(a_*\dot\scrC, a_*\dot\scrC\right)$ by means of localisation. This is because there is no straightforward implementation of $G_{\aff,\diamond,q}$-equivariant cohomology for the reason that $G_{\aff}$ is not an algebraic group. Instead, we shall work directly with the varieties of $\lambdatq$-fixed points $\frakg^{\lambdatq}_{\aff}$ and $\dot\frakg^{\lambdatq}_{\aff}$. Under the assumptions that we made about the cocharacter $\lambdatq$, these varieties are disjoint unions of algebraic varieties of finite type, which can be described in terms of cyclic grading on $\frakg$ and \textbf{spirals} and \textbf{splittings} introduced by Lusztig--Yun in~\cite{LYI}. Moreover, the connected components of $\dot\frakg^{\lambdatq}_{\aff}$ have a description similar to the varieties $\calT^{\nu}$ appearing in~\autoref{subsec:compsfixes}. 

\subsection{\texorpdfstring{$\bfZ$}{Z}-graded loop Lie algebras versus \texorpdfstring{$\bfZ/m$}{Z/m}-graded Lie algebras}

We briefly discuss here how to reduce the problem of the study of nilpotent cone of a $\bfZ$-graded affine Lie algebra to the one of a $\bfZ/m$-graded simple Lie algebra for $m = \delta(\lambda^M)\in \bfZ_{> 0}$ as above.  \par
Choose a $\Ct$-stable maximal torus $T\subset M$ and denote by $R = R(G, T)$ the root system. Put $T_{\diamond} = T\times \Ct$.
The affine Weyl group $W$ of $G_{\aff}$ is given by $W = N_{G_{\aff,\diamond}}(T_{\diamond}) / T_{\diamond}$. It acts linearly on the cocharacter lattice $\bfX_*(T_{\diamond})$.
Write $\lambdatq = (\lambda_0, m, \eta/2) \in \bfX_*(T\times \Ct\times \Cq)$ and denote $\lambdat = (\lambda_0, m)$. Then the weight spaces of $\lambda_{\diamond}$ yields a $\bfZ$-grading on $\frakg_{\aff}$
\[
	\frakg_{\aff} = \bigoplus_{n\in \bfZ}\frakg_{\aff,n}\;,\quad \frakg_{\aff,n} := \prescript{\lambda_{\diamond}}{n}\frakg_{\aff} = \bigoplus_{k\in \bfZ}\left\{ z\otimes \varpi^k\in \frakg\otimes \varpi^k\;;\; \lambda_0(t)z = t^{n-km}z,\quad \forall t\in \bfC^\times\right\}.
\]
On the other hand, the restriction $\theta := \lambda_0\mid_{\mu_m}:\mu_m\to T$ yields a $\bfZ/m$-grading
\[
	\frakg =\bigoplus_{\ubar n\in \bfZ/m}\frakg_{\ubar n}\;,\quad \frakg_{\ubar n} := \prescript{\theta}{\ubar n}\frakg = \left\{ z\in \frakg\;;\; \lambda_0(\zeta)z = \zeta^n z,\quad \forall \zeta\in \mu_m \right\}.
\]
Then we have an isomorphism on graded pieces induced by the evaluation $\varpi\mapsto 1$:
\begin{equation}\begin{aligned}\label{equa:lacets-cyclique}
	\frakg_{\aff,n}\cong \frakg_{\ubar n},\quad \forall n\in \bfZ.
\end{aligned}\end{equation}
	Similarly, we can deduce that the evaluation $\varpi\mapsto 1$ yields
\begin{equation}\begin{aligned}\label{equa:lacets-cyclique-groupe}
	G_{\aff,0}\cong G_{\ubar 0},\quad \text{where $G_{\aff, 0}:= G_{\aff}^{\lambda_{\diamond}}$ and $G_{\ubar 0} := G^{\theta}$.}
\end{aligned}\end{equation}
Moreover for each $n\in \bfZ$, the adjoint action of $G_{\aff,0}$ on $\frakg_{\aff,n}$ is isomorphic to the adjoint action of $\Gz$ on $\frakg_{\ubar n}$. 

\subsection{Parahorics and spirals}
Let $P\subset  G_{\aff}$ be a $\Ct$-stable parahoric subgroup containing $T$ and denote $\frakp = \Lie P$. We can find a cocharacter $\mu_{\diamond} = (\mu, m')\in \bfX_*(T)\oplus\bfZ_{> 0}\subset \bfX_*(\Tt)$ such that $\frakp = \prescript{\mu_{\diamond}}{\ge 0}\frakg_{\aff}$. Put $\frakp_n = \frakp\cap  \frakg_{\aff,n}$ and $\frakl_n = \frakl\cap \frakg_{\aff, n}$ for $n\in \bfZ$ and put $P_0 = P^{\lambda_{\diamond}}$ and $L_0 = L^{\lambda_{\diamond}}$. Via the isomorphism~\eqref{equa:lacets-cyclique}, we can describe $\frakp_n$ alternatively as subspace of $\frakg_{\ubar n}$:
\begin{equation*}\begin{aligned}
	\frakp_n \cong \prescript{m'\lambda_0 - m\mu}{\ge m'n}\frakg_{\ubar n}.
\end{aligned}\end{equation*}
This description gives rise to the notion of a \emph{spiral}, introduced in~\cite{LYI} as the $\bfZ/m$-graded counterpart of a parahoric subalgebra. The cocharacter $\mu_{\diamond}$ also gives rise to a $\Ct$-stable Levi factor of $P$ is denoted by $L = G^{\mu_{\diamond}}_{\aff}$. The Lie algebra $\frakl = \Lie L$ can be described alternatively by
\begin{equation*}\begin{aligned}
	\frakl_n \cong \prescript{m'\lambda_0 - m\mu}{m'n}\frakg_{\ubar n}.
\end{aligned}\end{equation*}
Similarly, one can regard $P_0, L_0$ and $L$ as algebraic subgroups of $G$ such that $\Lie P_0 = \frakp_0$, $\Lie L_0 = \frakl_0$ and $\Lie L = \frakl$. The notion of spirals will be reviewed in~\autoref{sec:grad}.

\subsection{Fixed-point components of \texorpdfstring{$\dot\frakg_{\aff}$}{gaff} and \texorpdfstring{$\ddot\frakg_{\aff}$}{dot gaff}}\label{subsec:fixedcomp}

We suppose that there is a $\Ct$-stable parahoric subgroup $P\subset G_{\aff}$ with a $\Ct$-stable Levi factor $M\subset P$ which contains $T$. Suppose also that there is a nilpotent orbit $\rmO\subset \frakm^{\nil}$ such that $\rmO \cap \frakm_{\eta}\neq \emptyset$ which carries an $M$-equivariant cuspidal local system $\scrC$. As before, the Lie algebra $\frakm = \Lie M$ is $\bfZ$-graded by the weights of $\lambda_{\diamond}$. Such a datum $\xi = (M, \frakm_*, \rmO, \scrC)$ is called an {\it admissible system} on $\frakg_{\ubar \eta}$, see~\autoref{subsec:admsys}. Given an admissible system $\xi = (M, \frakm_*, \rmO, \scrC)$ on $\frakg_{\ubar \eta}$, just as in the case of reductive group, one can define a relative affine root system (\autoref{subsec:affroot}) and a relative affine Weyl group $W_{\xi}$. \par

 The sets $\dot\frakg_{\aff} =  G\times^P \pi^{-1}(\rmO)\to \frakg_{\aff}$ and $\dot\frakg_{\aff}$ have a $G_{\aff,\diamond,q}$-action induced from the actions on $G_{\aff}$ and $\frakg_{\aff}$. As in the case of finite reductive groups~\autoref{subsec:compsfixes}, the $\lambdatq$-fixed points of this action in $\dot\frakg_{\aff}$ can be described as a disjoint union of connected components
\begin{equation*}\begin{aligned}
	\dot\frakg^{\lambda_{\diamond,q}}_{\aff} \cong \coprod_{\nu\in W_{\xi, \lambda}\backslash \Xi} \calT^\nu\to \frakg_{\aff}^{\lambda_{\diamond,q}} \cong \frakg_{\ubar \eta},
\end{aligned}\end{equation*}
where $\Xi$ is the affine analogue of the set introduced in~\autoref{subsec:coxZ}, which carries a $W_{\xi}$-action and $W_{\xi,\lambda} = \Stab_{W_{\xi}}(\lambda^M)$. We will define the set $\Xi$ in~\autoref{subsec:cusp} and the variety $\calT_{\eta}$ in~\autoref{subsec:steinberg}. Put $\ubar\Xi = W_{\xi,\lambda}\backslash\Xi$. \par

Since we have $\ddot\frakg^{\lambda_{\diamond,q}}_{\aff} = \dot\frakg^{\lambda_{\diamond,q}}_{\aff}\times_{\frakg_{\aff}} \dot\frakg^{\lambda_{\diamond,q}}_{\aff}$, there is a similar description for the Steinberg variety :
\begin{equation}\begin{aligned}\label{equa:steinberg-decomp}
	\ddot\frakg^{\lambda_{\diamond,q}}_{\aff} \cong \coprod_{\nu,\nu'\in \ubar\Xi} \calZ^{\nu, \nu'},\quad \calZ^{\nu, \nu'} = \calT^{\nu}\times_{\frakg_{\ubar \eta}}\calT^{\nu'}.
\end{aligned}\end{equation}
In consequence, the fixed-point sets $\ddot\frakg_{\aff}^{\lambda_{\diamond,q}}$, $\ddot\frakg_{\aff}^{\lambda_{\diamond,q}}$ and $\frakg_{\aff}^{\lambda_{\diamond,q}}$ are disjoint unions of $\Gztq$-equivariant algebraic varieties and the formalism of six operations and perverse sheaves is at our disposition. 

\subsection{Outline of the proof}
We can now explain the strategy of the construction of the homomorphism 
\begin{equation*}\begin{aligned}
	\Phi: \bfH_{\xi}\to \prod_{\nu'\in \ubar\Xi}\bigoplus_{\nu\in \ubar \Xi} \Ext^*_{\Gztq}\left( \bfI^{\nu'}, \bfI^{\nu} \right)_{0}.
\end{aligned}\end{equation*}
\begin{enumerate}
	\item[Step 1]
		We recall in~\autoref{sec:grad} the basic constructions on a $\bfZ/m$-graded Lie algebra introduced in~\cite{LYI}, such as the spiral induction functor $\Ind^{\frakg_{\ubar\eta}}_{\frakp_{\eta}}$ and admissible systems on the nilpotent cone. It is a $\bfZ/m$-graded analogue of~\autoref{subsec:Zgr}. 
	\item[Step 2]
		The $\bfZ/m$-counterpart of~\autoref{subsec:coxZ} is introduced in~\autoref{sec:affineCox}. We recall there the affine analogue of the facet decomposition $\frakF$ of an affine space $\bbA$ and the relative affine hyperplane $\bbE^M$, introduced in~\cite{LYIII}. We can identify $\bbE^M$ with the space $\bfX_*(Z_{M_{\diamond}})_{\bfQ}$ of fractional cocharacters. There is a bijection 
		\begin{equation*}\begin{aligned}
				\Xi = \left\{ \text{$\bbE^M$-chamber} \right\} &\xlongrightarrow{\sim} \left\{\frakp_* \text{ spiral}\;;\; \frakm_* \subset \frakp_*~\text{ is a splitting} \right\} \\
			\nu &\mapsto \frakp^{\nu}_*
		\end{aligned}\end{equation*}
		Then, we define the relative affine root system $(\bbE^M, R_{\xi})$ as well as its Weyl group $W_{\xi}$. Each element $\nu\in \Xi$ yields a base $\Delta^{\nu}_{\xi}\subset R_{\xi}$. We introduce also the canonical based relative root system $(\bbE_{\xi}, \Delta_{\xi})$ and the canonical relative Weyl group $\calW_{\xi}$. These two groups $W_{\xi}$ and $\calW_{\xi}$ act simply transitively on $\Xi$ and their action commute. We define also the linearisation $\bbE_{\xi,\diamond}$ of $\bbE_{\xi}$. 
	\item[Step 4]
		We put $\ubar \Xi = W_{\xi, \lambda}\backslash \Xi$. There is a bijection between $\ubar \Xi$ and the set of connected components of $\dot \frakg_{\aff}^{\lambda_{\diamond,q}}$.
		Following~\cite{LYIII}, for each $\nu\in \Xi$, consider the corresponding spiral $\frakp^{\nu}_*$ and the spiral induction $\bfI^{\nu} = \Ind^{\frakg_{\ubar \eta}}_{\frakp^{\nu}_{\eta}}\scrC\in \Db_{\Gztq}\left( \frakg_{\ubar \eta} \right)$. We {\itshape realise} the complex $a_*\dot\rho^*\dot\scrC$ as the infinite sum $\bigoplus_{\ubar \nu\in \ubar \Xi}\bfI^{\nu}$. Put 
		\[
			\ha\calH = \prod_{\ubar \nu'\in \ubar \Xi}\bigoplus_{\ubar \nu\in \ubar \Xi}\Ext^*_\Gztq(\bfI^{\nu'}, \bfI^{\nu})_{0}.
		\]
	\item[Step 5]
		Let $J\subsetneq \Delta_{\xi}$ and consider the parabolic subalgebra $\bfH_{\xi, J} \subset \bfH_{\xi}$. For each $\sigma\in \ubar \Xi / \calW_{\xi,J}$, there is an extension algebra
		\[
			\ha\calH_{\sigma} = \bigoplus_{\ubar\nu,\ubar\nu'\in \ubar\Xi^{\sigma}}\Ext^*_{H_{0,\diamond,q}}(\Ind^{\frakh_{\eta}}_{\frakq^{\nu'}_{\eta}}\scrC, \Ind^{\frakh_{\eta}}_{\frakq^{\nu}_{\eta}}\scrC)_{0},
		\]
		where $\frakh_* = \frakl^{\sigma}_*$ is a $\bfZ$-graded pseudo-Levi subalgebra of $\frakg$ and $\frakq^{\nu}_* = \frakh_*\cap \frakp^{\nu}_*\subset \frakh_*$ is the $\bfZ$-graded parabolic subalgebra corresponding to $\nu$.  The construction of~\cite{lusztig88}~\cite{lusztig95}~\cite{EM} yields a homomorphism $\Phi_{\sigma}: \bfH_{\xi,J}\to \ha\calH_\sigma$. Taking product over $\sigma$, we obtain
		\[
			\Phi_{J} = \left( \Phi_{\sigma} \right)_{\ubar\sigma\in \ubar\Xi / \calW_{\xi,J}}: \bfH_{\xi,J}\to \ha\calH_J = \prod_{\ubar\sigma\in \ubar\Xi / \calW_{\xi,J}}\ha\calH_{\sigma}
		\]
		The functoriality of $\Ind^{\frakg_{\ubar\eta}}_{\frakp^{\sigma}_{\eta}}$ yields an homomorphism $\ha\calH_J\to \ha\calH$, which is injective.
	\item[Step 6]
		For $K\subset J\subsetneq \Delta_{\xi}$, the compatibility result~\autoref{prop:compatibleZ} implies that there is a commutative square:
		\[
			\begin{tikzcd}[row sep=5pt]
				\bfH_{\xi,K}  \arrow[hookrightarrow]{d} \arrow{r}{\Phi_K} & \ha\calH_K  \arrow{d} \\
				\bfH_{\xi,J}   \arrow{r}{\Phi_J} & \ha\calH_J  \\
			\end{tikzcd}.
		\]
		This implies that we can take the direct limit over the partially ordered set of subsets $\left(\{J\subsetneq \Delta_{\xi}\}, \subseteq\right)$ and obtain
		\begin{equation*}\begin{aligned}
			\Phi = \varinjlim_{J}\Phi_J: \varinjlim_{J}\bfH_{\xi, J} \to \varinjlim_{J}\ha\calH_{J}\to \ha\calH.
		\end{aligned}\end{equation*}
		Using the fact that $\varinjlim_{J}\bfH_{\xi,J} = \bfH_{\xi}$, we obtain the map $\Phi$. 
\end{enumerate}
In the case where $J = \emptyset$, we have the smallest parabolic subalgebra $\bfH_{\xi, \emptyset}\subset \bfH_{\xi}$, which is equal to a polynomial ring $\bfS_{\xi} = \bfk[\bbE_{\xi,\diamond}]\otimes \bfk[u]$.
The second objective~\autoref{theo:density} is to show that the image of $\Phi$ is dense in $\ha\calH$ when the latter is equipped with a suitable topology. There are two essential ingredients:
\begin{enumerate}
	\item
		There is an affine version of the correspondence $\ubar\Xi\longleftrightarrow \left\{ \lambda_{\nu} \right\}$. This will allow to show that the restriction of $\Phi$ to the polynomial part $\bfS_{\xi}$ contains all the idempotents $(\bfe_{\nu})_{\nu\in \ubar \Xi}$ in~\autoref{subsec:compl}. It is done in~\autoref{prop:dense}.
	\item
		An analysis of the Steinberg varieties $\calZ^{\nu,\nu'}:= \calT^{\nu}\times_{\frakg_{\ubar \eta}} \calT^{\nu'}$ by stratification is done in~\autoref{sec:geomZ}. 
	\item
		A filtration on $\ha\calH$ by the Bruhat order is defined in~\autoref{subsec:filtration}. The associated quotient of this filtration can be studied using the stratification on the Steinberg varieties. We deduce a surjectivity result (\autoref{lemm:isostr}) which is crucial in the proof of the density theorem (\autoref{theo:density}).
	\item
		For each $J$, the map $\ha\calH_J\to \ha\calH$ is studied determined in~\autoref{prop:imagepsi}. 
\end{enumerate}

\begin{rema}
	The module category of $\bfH_{\xi}$ in which we are interested is, by convention, the category $\calO(\bfH_{\xi,\delta=1})$ consisting of finitely generated $\bfH_{\xi}$-modules on which the polynomial part $\bfS_{\xi}$ acts locally finitely and the imaginary root $\delta\in \bbE^*_{\xi,\diamond}$ acts by $1$. However, we have $\delta(\lambda^M) = \delta(\lambdat) = m$. For this reason we need to renormalise the parameters by setting $\bfx^M = \lambda^M/m$, $\bfx = \lambda_{\diamond}/m$, $\bfx_{\nu} = \lambda_{\nu} / m$. We will consider the block $\calO_{\bfx_{\nu}, \eta / 2m }(\bfH_{\xi,\delta=1})$ consisting of modules $\scrM\in \calO(\bfH_{\xi,\delta=1})$ on which $\bfS_{\xi} / (\delta - 1)$ acts with eigenvalues in the orbit $W_{\xi}\bfx_{\nu}\subset \bbE_{\xi}$, see ~\autoref{subsec:OH}. 
\end{rema}

\section{\texorpdfstring{$\bfZ/m$}{Z/m}-grading, spirals and splittings} \label{sec:grad}

We recall in this section some of the basic constructions over $\bfZ/m$-graded Lie algebras introduced in~\cite{LYI} and~\cite{LYIII}. 
\subsection{\texorpdfstring{$\bfZ/m$}{Z/m}-grading on \texorpdfstring{$G$}{G}} \label{subsec:grad}
Let $G$ \index{G@$G$} be a connected simply connected simple algebraic group over $\bfC$. The Lie algebra is denoted $\frakg = \Lie G$. \index{g@$\frakg$}\par
We fix a positive integer $m\in \bfZ_{> 0}$. For any integer $k\in \bfZ$, we denote $\ubar k = k \mod m\in \bfZ / m$. Let
\begin{equation*}\begin{aligned}
	\frakg = \bigoplus_{\ubar i\in \bfZ / m}\frakg_{\ubar i}
\end{aligned}\end{equation*}
\index{g@$\frakg_{\ubar i}$}
be a $\bfZ/m$-grading on $\frakg$ such that $\left[ \frakg_{\ubar i}, \frakg_{\ubar j} \right]\subseteq \frakg_{\ubar i+\ubar j}$ for all $\ubar i, \ubar j\in \bfZ/m$. Let $\mu_m\subset \bfC^\times$\index{m@$\mu_m$} be the group of $m$-th roots of unity. We define a homomorphism $\theta: \mu_m\to \Aut(\frakg)$\index{theta@$\theta$} by setting 
\begin{equation*}\begin{aligned}
	\theta(\zeta)\mid_{\frakg_{\ubar j}} = \zeta^j, \quad \forall \zeta\in \mu_m\;\forall j\in \bfZ.
\end{aligned}\end{equation*}\index{m@$\mu_m$}
The simplicity and the simple connectedness of $G$ imply that $\Aut(G)\cong \Aut(\frakg)$, so we can write $\theta: \mu_m\to \Aut(G)$. The fixed-point subgroup $G_{\ubar 0} = G^{\theta}\subset G$\index{G@$G_{\ubar 0}$} is connected by the theorem of Steinberg~\cite[8.1]{steinberg68}.  \par

\par

We assume that the $\bfZ/m$-grading is inner~\footnote{This assumption is made in order to simplify the presentation below. The affine root system $R_{\aff}$ that we introduce in~\autoref{subsec:affroot} below, under this assumption, is {\itshape untwisted} (denoted $A^{(1)}_n, B^{(1)}_n, \ldots$ in Kac's notation). We indicate in~\autoref{sec:twisted} how to adjust the construction to the case where $\theta$ is not inner. } in the sense that $\theta(\mu_m)\subset G^{\ad}$, where $G^{\ad}$ is the adjoint group of $G$. Under this assumption, we may assume without loss of generality that $\theta$ admits a lifting $\mu_m\to G$ --- indeed, we may pre-compose $\theta$ with a finite cover $[m']:\mu_{m'm}\to \mu_m$, where $m' = \#Z_G$, so that the resulting homomorphism $\theta':\mu_{m'm}\to G^{\ad}$ lifts to $G$; under this change, the grading on $\frakg$ is multiplied by $m'$. There exists then a cocharacter $\lambda_0\in \bfX_*\left( G\right)$ \index{lambda@$\lambda_0$} such that
\begin{equation*}\begin{aligned}
	\frakg_{\ubar i} = \bigoplus_{\substack{k\in \bfZ \\ \ubar i = \ubar k}}\prescript{\lambda_0}{k}{\frakg}.
\end{aligned}\end{equation*}
We fix once and for all the choice of such a cocharacter $\lambda_0$.

Besides, we fix a maximal torus $T\subseteq G$ \index{T@$T$} which centralises $\lambda_0$ so that $\lambda_0\in \bfX_*\left( T \right)$. In particular, $T$ is contained in $G_{\ubar 0}$. Fix once and for all an integer $\eta\in \bfZ_{\neq 0}$. Let $\frakg_{\ubar \eta}^{\nil} = \frakg_{\ubar \eta}\cap \frakg^{\nil}$ be the closed subvariety of nilpotent elements. 

\subsection{Jacobson--Morosov theorem}\label{subsec:JM}
Recall the theorem of Jacobson--Morosov in the $\bfZ/m$-graded setting. Let $e\in \frakg_{\ubar \eta}^{\nil}$. According to~\cite[2.3]{LYI}, we can complete $e$ into an $\fraksl_2$-triple $\phi = \left( e, h, f \right)$ with $h\in \frakg_{\ubar 0}$ and $f\in \frakg_{-\ubar \eta}$. Consequently, there is a cocharacter of $\Gz$, the exponentiation of $h$, which we denote by $\exp(h): t\mapsto t^h$. Moreover, the set of such triple $\phi$ with a given $e$ forms a principal homogeneous space under the action of the unipotent part of the stabiliser of $e$ in $G_{\ubar 0}$.  Using the Jacobson--Morosov theorem, one can show that the $G_{\ubar 0}$-orbits in $\frakg^{\nil}_{\ubar \eta}$ are invariant under homothety. 

\subsection{Spirals, nilpotent radical and splittings}\label{subsec:spirals}
Let $\mu\in \bfX_*\left(G_{\ubar 0}\right)_{\bfQ}$ be a fractional cocharacter and let $\varepsilon \in \{1, -1\}$.\index{e@$\varepsilon$} We attach to it a $\bfZ$-graded Lie algebra $\prescript{\varepsilon}{}\frakp^{\mu}_* =\bigoplus_{n\in \bfZ}  \prescript{\varepsilon}{}\frakp^\mu_n$ where
\begin{equation*}\begin{aligned}
	\prescript{\varepsilon}{}\frakp^{\mu}_n = \bigoplus_{\substack{r\in \bfQ \\r \ge \varepsilon n}} \prescript{\mu}{r}\frakg_{\ubar n}.
\end{aligned}\end{equation*}
Such a $\bfZ$-graded Lie algebra $\prescript{\varepsilon}{}\frakp^{\mu}_*$ \index{p@$\frakp^{\mu}_*$} is called an  {\bf $\varepsilon$-spiral} of $\frakg$. We also define a $\bfZ$-graded Lie subalgebra of $\frakg$
\begin{equation*}\begin{aligned}
	\prescript{\varepsilon}{}\frakl^{\mu}_* = \bigoplus_{n\in \bfZ} \prescript{\varepsilon}{}\frakl^{\mu}_n, \quad \prescript{\varepsilon}{}\frakl^{\mu}_n = \prescript{\mu}{\varepsilon n}\frakg_{\ubar n}.
\end{aligned}\end{equation*}
Such a Lie subalgebra $\prescript{\varepsilon}{}\frakl^{\mu}_*$ \index{l@$\prescript{\varepsilon}{}\frakl^{\mu}_*$} is called a {\bf splitting} of the spiral $\prescript{\varepsilon}{}\frakp^{\mu}_*$. We let $\prescript{\varepsilon}{}\frakl^{\mu}$ be the same Lie algebra as $\pre{\varepsilon}\frakl^{\mu}_*$ which has the grading forgotten. \par
We define the $\bfZ$-graded Lie algebra $\prescript{\varepsilon}{}\fraku^{\mu}_*$ with
\begin{equation*}\begin{aligned}
	\prescript{\varepsilon}{}\fraku^{\mu}_* = \bigoplus_{n\in \bfZ} \prescript{\varepsilon}{}\fraku^{\mu}_n ,\quad \prescript{\varepsilon}{}\fraku^{\mu}_n = \bigoplus_{r > \varepsilon n} \prescript{\mu}{r}\frakg_{\ubar n}
\end{aligned}\end{equation*}
to\index{u@$\fraku^{\mu}_*$} be the {\bf nilpotent radical} of the spiral $\prescript{\varepsilon}{}\frakp^{\mu}_*$.  Then $\prescript{\varepsilon}{}\fraku^{\mu}_*$ forms a homogeneous ideal of the graded Lie algebra $\prescript{\varepsilon}{}\frakp^{\mu}_*$ and that for each $n\in \bfZ$ the subspace $\prescript{\varepsilon}{}\frakl^{\mu}_n\subseteq \prescript{\varepsilon}{}\frakp^{\mu}_n$ is mapped isomorphically onto the quotient $\prescript{\varepsilon}{}\frakp^{\mu}_n / \prescript{\varepsilon}{}\fraku^{\mu}_n$. Moreover, $\pre{\varepsilon}\frakl^{\mu}$ is the Lie algebra of a pseudo-Levi subgroup of $G$, denoted by $L^{\mu}$, see~\cite[2.2.5]{LYIII}. 

We will write $\frakp^{\mu}_*$, $\frakl^{\mu}_*$ and $\fraku^{\mu}_*$ instead of $\prescript{\varepsilon}{}\frakp^{\mu}_*$, $\prescript{\varepsilon}{}\frakl^{\mu}_*$ and $\prescript{\varepsilon}{}\fraku^{\mu}_*$ when $\varepsilon$ is clear from the context.

\subsection{Admissible systems}\label{subsec:admsys}
An {\bf admissible system} on $\frakg_{\ubar \eta}$, as defined in~\cite{LYI}, is a datum $\left(M, \frakm_*, \rmO, \scrC  \right)$, where $M\subset G$ is a subgroup whose Lie algebra is equipped with a $\bfZ$-grading $\frakm_* = \bigoplus_{n\in \bfZ} \frakm_n$ arising as splitting $\frakl^{\mu}_*$ of some spiral, $\rmO\subset \frakm^{\nil}$ is a $M$-orbit such that $\rmO\cap \frakm_{\eta} \neq \emptyset$ and $\scrC\in \Loc_{M}(\rmO)$ is a cuspidal local system in the sense of~\cite{lusztig84}. We denote $\rmO_{\eta} = \rmO \cap \frakm_{\eta}$ and $\scrC_{\eta} = \scrC\mid_{\rmO_{\eta}}$. It is known that $\scrC_{\eta}$ is an irreducible $M_0$-equivariant local system which is {\itshape clean}, see~\cite[\S 4]{lusztig95b}. \par 

An isomorphism of admissible systems $(M, \frakm_*, \rmO, \scrC)\cong (M', \frakm'_*, \rmO', \scrC')$ is a pair $(g, \varphi)$ of element $g\in \Gz$ such that $gMg^{-1} = M'$, $\Ad_g \frakm_n = \frakm'_n$, $\Ad_g\rmO = \rmO'$ and an isomorphism $\varphi : g^* \scrC'\cong \scrC$.  Let $\frakT\left(\frakg_{\ubar \eta}\right)$ denote the groupoid of admissible systems on $\frakg_{\ubar \eta}$ and let $\ubar\frakT\left(\frakg_{\ubar \eta}\right)$ denote the set of isomorphism classes its objects. 

\subsection{Extra torus actions}\label{subsec:extratori}

We will consider two one-dimensional tori $\Cq = \bfC^{\times}$\index{C@$\Cq$} and $\Ct = \bfC^{\times}$.\index{C@$\Ct$} We let $\Cq$ act on $\frakg_{\ubar \eta}$ by weight $-2$ and we let it acts trivially on $G_{\ubar 0}$. The torus $\Ct$ acts~\footnote{As it is said in the beginning of~\autoref{sec:strat}, the torus $\Ct$ acts morally by loop rotation. We will see in~\autoref{prop:Ppoids} why the $\Ct$-action is defined in this way.} on $\frakg_{\ubar \eta}$ by $(\eta - \lambda_0) / m$ and on $\Gz$ by $-\lambda_0 / m$. Then the product $\Gz\rtimes \Ct\times \Cq$ acts on $\frakg_{\ubar\eta}$ in a natural way. \par

Given any $\mu\in \bfX_*(G_{\ubar 0})_{\bfQ}$, consider the $\varepsilon$-spiral $\frakp^{\mu}_*$ and the splitting $\frakl^{\mu}_*$. We let $\Cq$ acts on $\frakp^{\mu}_*$ by weight $-2$ and trivially on the groups $L^{\mu}$ and $P^{\mu}_0$. For each $n\in \bfZ$, we let $\Ct$ act on $\frakp_n$ by $(n - \lambda_0) / m$ for each $n\in \bfZ$. This $\Ct$-action, when restricted to the Lie algebra $\frakl^{\mu}_*$, can be integrated to an $\Ct$-action on the group $L$. The product $L\rtimes \Ct\times \Cq$ acts on $\frakl^{\mu}$ ($\bfZ$-grading forgotten) in a natural way. Similarly, the group $L^{\mu}_0\rtimes \Ct\times \Cq$ (resp. $P^{\mu}_0\rtimes \Ct\times \Cq$) acts on the $\bfZ$-graded Lie algebra $\frakl^{\mu}_*$ (resp. $\frakp^{\mu}_*$). \par

We also want to consider the $\Ct$-equivariance of sheaves on Lie algebras. However, not every cuspidal local system admits an extra $\Ct$-equivariance. For this reason, let $\Ctm\to \Ct$\index{C@$\Ctm$} be the $2m$-fold cover. Hereafter, for any group $H$ on which $\Ct$ acts by automorphism, we denote\footnote{The use of the index $\diamond$ differs slightly from that in~\autoref{sec:strat}. We are obliged to take a finite cover of $\Ct$ so as to make the cuspidal local system equivariant. This technical point was omitted there.} $H_{\diamond} = H\rtimes \Ctm$ and for any group $K$, we denote $K_q = K\times \Cq$.\index{0@$\Gztq, \Gzt, M_q, \Mtq, \Mztq$} \par
\begin{prop}\label{prop:qtequivariance}
Given any admissible system $(M, \frakm_*, \rmO, \scrC)$ on $\frakg_{\ubar \eta}$, the cuspidal local system $\scrC$ can be enhanced to a $\Mtq$-equivariant local system on $\rmO$ in a unique way.
\end{prop}
\begin{proof}
	Pick $e\in \rmO_{\eta}$. Since we have $\Loc_{M}(\rmO)\cong \Rep \pi_0(Z_{M}(e))$, and $\Loc_{\Mtq}(\rmO)\cong \Rep \pi_0(Z_{\Mtq}(e))$ it suffices to show~\autoref{lemm:pi} below.
\end{proof}
\begin{lemm}\label{lemm:pi}
	Let $\mu\in \bfX_*(\Gz)$ and $(L^{\mu}, \frakl^{\mu}_*)$ the corresponding $\bfZ$-graded pseudo-Levi. Let $e\in \frakl^{\mu}_{\eta}$. Then the following inclusions
	\[
		Z_{L^{\mu}}(e)\subseteq Z_{L^{\mu}_{\diamond}}(e) \subseteq Z_{L^{\mu}_{\diamond, q}}(e)
	\]
	induce isomorphisms on the component groups
	\[
		\pi_0(Z_{L^{\mu}}(e)) \cong \pi_0(Z_{L^{\mu}_{\diamond} }(e))\cong \pi_0(Z_{L^{\mu}_{\diamond,q}}(e)).
	\]
\end{lemm}
\begin{proof}
	Complete $e$ into an $\fraksl_2$-triple $\phi = (e, h, f)$ with $h\in \frakl^{\mu}_{0}$ and $f\in \frakl^{\mu}_{-\eta}$, which is possible by the $\bfZ$-graded Jacobson--Morosov theorem~\cite[3.3]{lusztig95b}. Let $\varphi = \exp(h)\in \bfX_*(L_0)$. Since the map
	\begin{equation*}\begin{aligned}
		L^{\mu}\times \bfC^{\times}&\to L^{\mu}\rtimes \Ctm\\
		(g, \tau) &\mapsto (g \varphi(\tau^{-\eta}) \lambda_0(\tau^2), \tau)
	\end{aligned}\end{equation*}
	restricts to an isomorphism
	\begin{equation*}\begin{aligned}
		Z_{L^{\mu}}(\phi)\times\bfC^{\times}\cong Z_{L^{\mu}_{\diamond}}(\phi),
	\end{aligned}\end{equation*}
	we obtain isomorphisms on their component groups.
	\[
		\pi_0(Z_{L^{\mu}}(e))\cong \pi_0(Z_{L^{\mu}}(\phi))\cong\pi_0(Z_{L^{\mu}_{\diamond}}(\phi)) \cong\pi_0(Z_{L^{\mu}_{\diamond}}(e)),
	\]
	where the first (resp. the last) isomorphism is due to the fact that $Z_{L^{\mu}}(\phi)\subset Z_{L^{\mu}}(e)$ (resp. $Z_{L^{\mu}_{\diamond}}(\phi)\subset Z_{L^{\mu}_{\diamond}}(e)$) is a maximal reductive subgroup, see~\cite[2.1]{lusztig88}.  \par
	For the $\Cq$-equivariance, we make use of the group $Z^q_{L^{\mu}_{\diamond}}(\phi)$ introduced in~\eqref{equa:Mphi} and the isomorphism
	\[
		Z_{L^{\mu}_{\diamond}}(\phi)\times \bfC^{\times}\cong Z^q_{L^{\mu}_{\diamond}}(\phi)
	\]
	to show that
	\[
		\pi_0(Z_{L^{\mu}_{\diamond}}(e))\cong \pi_0(Z_{L^{\mu}_{\diamond}}(\phi))\cong\pi_0(Z^q_{L^{\mu}_{\diamond}}(\phi)) \cong\pi_0(Z_{L^{\mu}_{\diamond,q}}(e)).
	\]
\end{proof}
\begin{rema}
	As explained in~\autoref{sec:strat}, we think of $\frakg_{\ubar \eta}$ as the graded piece $\frakg_{\aff, \eta}$ of the loop group $\frakg_{\aff} = \frakg[ \varpi^{\pm 1} ]$. Under this identification, the action of $\Ct$ on $\frakg_{\ubar \eta}$ defined as above coincides with the loop rotation on $\frakg_{\aff, \eta}$.
\end{rema}

\subsection{Spiral induction}\label{subsec:indspirale}
With the datum $\left( \frakp_*, \frakl_*, \fraku_* \right) =\left( \prescript{\varepsilon}{}\frakp^{\mu}_*, \prescript{\varepsilon}{}\frakl^{\mu}_*, \prescript{\varepsilon}{}\fraku^{\mu}_* \right)$ of a spiral together with a splitting, we can define the functor of induction. Let $P_0 = \exp(\frakp_0)$ and $L_0 = \exp(\frakl_0)$.  \par

Fix a sign $\varepsilon \in \left\{ 1, -1 \right\}$. We consider the following diagram
\begin{equation*}\begin{aligned}
	\frakg_{\ubar \eta} \xleftarrow{\alpha} G_{\ubar 0}\times^{P_0}\frakp_{\eta}\xleftarrow{\beta} \frakp_{\eta}\xrightarrow{\gamma}  \frakl_{\eta},
\end{aligned}\end{equation*}
where
\begin{equation*}\begin{aligned}
				\quad \alpha(g, x) = \Ad(g) x; \quad \beta(x) = (e, x); \quad \gamma(x) = x\mod{\fraku_{\eta}}.
\end{aligned}\end{equation*}
Then, the morphism $\alpha$ is proper whereas $\gamma$ is a trivial vector bundle. The above sequence gives rise to a sequence of quotient stacks:
\begin{equation*}\begin{aligned}
	[\frakg_{\ubar \eta}/ \Gztq] \xleftarrow{a} [\Gz\times^{P_{0}}\frakp_{\eta} / \Gztq]\xleftarrow{b}  [\frakp_{\eta}/\Pztq]\xrightarrow{c} [\frakl_{\eta}/\Lztq],
\end{aligned}\end{equation*}
where $b$ is an isomorphism and $a$ is proper. The spiral induction is defined as
\begin{equation}\begin{aligned}\label{equa:nil}
	\Ind^{\frakg_{\ubar \eta}}_{\frakp_{\eta}} &= (ab)_* c^*: \Db_{\Lztq}\left( \frakl_{\eta} \right)\to \Db_{\Gztq}\left( \frakg_{\ubar \eta} \right). \\ 
\end{aligned}\end{equation}\index{Ind@$\Ind^{\frakg_{\ubar \eta}}_{\frakp_{\eta}}$}
In fact, for the sign $\varepsilon = \eta / |\eta|$, it induce functors on the nilpotent cones~\cite[7.1(a)]{LYI}:
\begin{equation*}\begin{aligned}
	\Ind^{\frakg_{\ubar \eta}}_{\frakp_{\eta}}:\Db_{\Lztq}\left( \frakl^{\nil}_{\eta} \right)\to \Db_{\Gztq}\left( \frakg^{\nil}_{\ubar \eta} \right).
\end{aligned}\end{equation*}

The spiral induction functor satisfies the following transitivity property with the parabolic induction:
\begin{equation}\begin{aligned}\label{equa:transindsp}
	\Ind^{\frakg_{\ubar \eta}}_{\frakp_{\eta}} = \Ind^{\frakg_{\ubar \eta}}_{\frakq_{\eta}}\circ\Ind^{\frakh_{\eta}}_{\ba\frakp_{\eta}}
\end{aligned}\end{equation}
whenever there is a spiral $\frakq_*$ with splitting $\frakh_*$ such that $\frakp_*\subset \frakq_*$ and $\frakl_*\subset \frakh_*$. Here $\ba\frakp_* = \frakp_*\cap \frakh_*$ is a parabolic subalgebra of $\frakh_*$.

\subsection{Block decomposition}\label{subsec:block}
Suppose that $\varepsilon = \eta / |\eta|$. Let $\xi = (M, \frakm_*, \rmO, \scrC)\in \ubar\frakT\left( \frakg_{\ubar \eta} \right)$ be an admissible system on $\frakg_{\ubar \eta}$. By the cleanness of cuspidal local systems~\cite[\S 4]{lusztig95b}, the $*$-extension and the $!$-extension of $\scrC_{\eta}$ are isomorphic to (a shift of) the intersection complex $\IC(\scrC_{\eta})$. Therefore, we may identify $\scrC_{\eta}$ with its direct image on $\frakm_{\eta}$. \par

Define $\Db_{\Gz}\left( \frakg_{\ubar \eta}^{\nil} \right)_{\xi}$ to be the thick triangulated subcategory of $\Db_{\Gz}\left( \frakg_{\ubar \eta}^{\nil} \right)$ generated by the constituents of the perverse cohomology of $\Ind^{\frakg_{\ubar \eta}}_{\frakp_\eta}\scrC$ for all $\varepsilon$-spiral $\frakp_*$ which has $\frakm_*$ as splitting.  We define $\Perv_{\Gz}\left( \frakg_{\ubar \eta}^{\nil} \right)_{\xi}$ to be the intersection of $\Db_{\Gz}\left( \frakg_{\ubar \eta}^{\nil} \right)_{\xi}$ with $\Perv_{\Gz}\left( \frakg_{\ubar \eta}^{\nil} \right)$. We call these subcategories and subsets the {\bf blocks} of $\xi$. \par

According to~\cite[0.6]{LYI}, there are orthogonal decompositions
\begin{equation*}\begin{aligned}\label{equa:dcpGz}
	\Perv_{\Gz}\left( \frakg^{\nil}_{\ubar \eta} \right) = \bigoplus_{\xi\in \ubar\frakT\left( \frakg_{\ubar \eta} \right)} \Perv_{\Gz}\left( \frakg^{\nil}_{\ubar \eta} \right)_{\xi}, \qquad \Db_{\Gz}\left( \frakg^{\nil}_{\ubar \eta} \right) = \bigoplus_{\xi\in \ubar\frakT\left( \frakg_{\ubar \eta} \right)} \Db_{\Gz}\left( \frakg^{\nil}_{\ubar \eta} \right)_{\xi}.
\end{aligned}\end{equation*}
Using the Hochschild--Serre spectral sequence, we can deduce the same decompositions with the extra torus actions:
\begin{equation}\begin{aligned}\label{equa:dcpGztq}
	\Perv_{\Gztq}\left( \frakg^{\nil}_{\ubar \eta} \right) = \bigoplus_{\xi\in \ubar\frakT\left( \frakg_{\ubar \eta} \right)} \Perv_{\Gztq}\left( \frakg^{\nil}_{\ubar \eta} \right)_{\xi}, \qquad \Db_{\Gztq}\left( \frakg^{\nil}_{\ubar \eta} \right) = \bigoplus_{\xi\in \ubar\frakT\left( \frakg_{\ubar \eta} \right)} \Db_{\Gztq}\left( \frakg^{\nil}_{\ubar \eta} \right)_{\xi}.
\end{aligned}\end{equation}

\section{Relative affine root system and affine Weyl group}\label{sec:affineCox}
This section is a reminder of the affine alcove complex defined in \cite{LYIII} and its relations with spirals and splittings. We retain the setting of~\autoref{sec:grad}. \par

\subsection{Affine root hyperplane arrangement}\label{subsec:affroot} 
From now on, we fix a maximal torus $T\subset G^{\lambda_0}$ so that $\lambda_0\in \bfX_*(T)$. Let $\bbA_{\diamond}$\index{A@$\bbA, \bbA_{\diamond}$} denote the vector space $\bfX_*(T_{\diamond})_\bfQ = \bfX_*(T_{\diamond})\otimes \bfQ$. Since 
\[
	\bfX_*(T_{\diamond}) = \bfX_*(T)\times\bfX_*(\Ctm) = \bfX_*(T)\times 2m\bfX_*(\Ct), 
\]
we let $(0,2m)\in \bbA_{\diamond}$ denote the inclusion of $\Ctm$ in $T_{\diamond}$ and let $\delta = (0,1)\in \bbA_{\diamond}^*$ \index{d@$\delta$} denote the character given by the obvious map $T_{\diamond}\to \Ct$ so that $\delta(\mu, 1) = 1$ for all $\mu\in \bfX_*(T)$. Notice that the defining character of $\Ctm$ is $(1/2m)\delta$. Let $\bbA = \delta^{-1}(1)\subset \bbA_{\diamond}$, which is an affine subspace whose tangent space is $\bfX_*(T)_\bfQ$.  \\
The root system of $G$ is a subset $R(G,T)\subset \bfX^*(T)$. We define $R_{\aff} = R(G, T)\oplus\bfZ \delta\subset \bbA^*_{\diamond}$ to be the set of (real) affine roots.\index{R@$R_{\aff}$} \par
Each affine root $\alpha \in R_{\aff}$ restricts to a non-constant affine function on $\bbA$, whose zero locus is a hyperplane in $\bbA$, denoted by $H_{\alpha}$ and called {\bf root hyperplane}. Let $\frakH = \left\{ H_{\alpha}\;;\; \alpha \in R_{\aff} \right\}$\index{H@$\frakH$} be the collection of root hyperplanes. These affine hyperplanes yield a stratification of $\bbA$ into {\bf facets} (i.e. subsets of $\bbA$ determined by a finite number of equations $\alpha = 0$ or $\alpha > 0$ with $\alpha\in R_{\aff}$). Let $\frakF$\index{F@$\frakF$} denote the collection of facets. The facets of maximal dimension are called {\bf alcoves}. The set of alcoves is denoted by $\frakA$\index{A@$\frakA$}. \par

The Killing form on $\frakg$ yields a Euclidean space structure on $\bbA$. The {\bf affine Weyl group} (with respect to $T$) is the group $W = N_G(T)/T\ltimes \bfX_*(T)$\index{W@$W$}, which acts on $\bbA$ by orthogonal reflections and translations. This action preserves the set $\frakH$ of root hyperplanes, inducing thus a $W$-action on $\frakF$, whose restriction to $\frakA\subseteq \frakF$ is simply transitive. Since $\bbA_{\diamond}$ is the linearisation of $\bbA$, the affine $W$-action on $\bbA$ extends in a unique way to a linear $W$-action on $\bbA_{\diamond}$. \par

Let $\frakE$ be the collection of affine subspaces of $\bbA$ which are non-empty intersection of a finite subset of $\frakH$. Elements of $\frakE$\index{E@$\frakE$} are called {\bf relevant subspaces}. \par

\subsection{Canonical affine Weyl group}\label{subsec:canweyl}

Given an alcove $\kappa\in \frakA$, let $\Delta^{\kappa}\subseteq R_{\aff}$\index{D@$\Delta^{\kappa}$} be the subset of affine simple roots $\alpha$ such that $\alpha > 0$ on $\kappa$ and $H_{\alpha} \cap \ba\kappa$ is a face of the simplex $\ba\kappa$. It is known that $\Delta^\kappa$ forms a base for the affine root system $(\bbA, R_{\aff})$. For each $\alpha\in R_{\aff}$, let $s_\alpha\in W$ denote the orthogonal reflection on $\bbA$ with respect to the root hyperplane $H_{\alpha}$. Then injective map $\Delta^{\kappa}\hookrightarrow W$ defined by $\alpha\mapsto s_\alpha$ yields a Coxeter group $\left(W, \Delta^{\kappa}\right)$. \par
Since $W$ acts simply transitively on $\frakA$, given any two alcoves $\kappa,\kappa'\in \frakA$, there is a unique $w\in W$ such that $\kappa' = w\kappa$. Then, $w$ yields an isomorphism of Coxeter groups
\begin{equation*}\begin{aligned}
	(W, \Delta^{\kappa}) &\cong ( W, \Delta^{\kappa'} ) \\
	y\mapsto wyw^{-1},&\quad  s_\alpha \mapsto w s_\alpha w^{-1} = s_{w \alpha}.
\end{aligned}\end{equation*}
Thus we can define 
\[
	\left(\calW, \Delta\right) = \varprojlim_{\kappa\in \frakA} \left( W, \Delta^\kappa \right)
\]
to be the {\bf canonical affine Weyl group}: for any $\kappa\in \frakA$, there is a canonical isomorphism $\left(\calW, \Delta\right) \cong \left(W, \Delta^{\kappa}\right)$\index{W@$\calW$}\index{D@$\Delta$}. \par 

For any subset $J\subseteq \Delta$, we denote by $\left(\calW_{J}, J\right)$ \index{W@$\calW_{J}$} the Coxeter sub-system of $\left(\calW, \Delta\right)$ generated by $J$. For any alcove $\kappa\in \frakA$, we denote by $\left(W_{J^{\kappa}}, J^{\kappa}\right)$ the Coxeter subgroup of $\left(W, \Delta^{\kappa}\right)$ which is the image of $\left( \calW_J, J \right)$ in $(W, \Delta^{\kappa})$ under the canonical isomorphism $\left(\calW, \Delta\right)\cong \left(W, \Delta^{\kappa}\right)$.  \par

We define a $\calW$-action on $\frakA$ as follows: for $w\in \calW$ and $\kappa\in \frakA$, put $w\kappa = (w^{\kappa})^{-1}\kappa$. This action is simply transitive and commutes with the $W$-action on $\frakA$.
\begin{rema}
	It is useful to keep in mind that $\calW = \Aut_W(\frakA)$ and that the $\calW$-action and the $W$-action on $\frakA$ are of different nature. The $W$-action on $\frakA$ is induced from a reflection action on $\bbA$, whereas the $\calW$-action on $\frakA$ does not extend continuously to an action on $\bbA$. Moreover, the $\calW$-action is ``local'' in the sense that whenever $s\in \calW$ is a simple reflection, $s\kappa$ and $\kappa$ are adjacent for every alcove $\kappa\in \frakA$. 
\end{rema}

\subsection{Facets and Coxeter subgroups}\label{subsec:facetcox}
We put a partial order $\le$ on the set of facets $\frakF$ by the inclusion of closure : $\sigma \le \tau$ if $\sigma\subseteq \ba\tau$. Given any $\kappa\in \frakA$ and $J\subsetneq \Delta$, we define a facet $\partial_J \kappa\in \frakF$ with $\partial_J \kappa \le \kappa$ by
\begin{equation*}
	\partial_{J}\kappa = \left\{y\in \bbA\;;\;\begin{aligned}& \alpha(y) = 0, & \forall \alpha\in J^{\kappa} \\ & \alpha(y)  > 0, & \forall \alpha\in \Delta^{\kappa}\setminus J^{\kappa}\end{aligned}\right\}. 
\end{equation*}
Then $W_{J^{\kappa}}$ coincides with the pointwise stabiliser $W_{\partial_J \kappa} = \Stab_W(\partial_J\kappa)$. 
\index{0@$\partial_{J}$}
\par

Given any facet $\sigma\in \frakF$, choose an alcove $\kappa\in \frakA$ such that $\sigma\le \kappa$. There is a unique subset $J\subsetneq \Delta$ such that $\partial_J \kappa = \sigma$ and moreover, the subset $J$ is independent of the choice of $\kappa\in \frakA$. The subset $J\subsetneq \Delta$ is called the {\bf type} of the facet $\sigma\in \frakF$. Let $\frakF_{J} = \partial_J(\frakA)\subseteq \frakF$\index{F@$\frakF_J$} denote the set of facets of type $J$. It follows that $\frakF = \bigsqcup_{J\subsetneq \Delta}\frakF_J$ and $\frakF_{\emptyset} = \frakA$. Moreover, each subset $\frakF_J\subset \frakF$ is stable by the action of $W$ and the map $\partial_J:\frakA\to \frakF_J$ induces a bijection of $W$-sets $\frakA / \calW_J \cong \frakF_J$.
\par

For any pair of strict subsets $J\subseteq K\subsetneq \Delta$, the boundary map $\partial_{K}: \frakA\to \frakF_{K}$ descends to a map $\partial^J_K:\frakF_J\to \frakF_K$ such that $\partial_K = \partial^J_K\circ\partial_J$. By abuse of notation, we will write $\partial_K = \partial^J_K$. 
\par

\subsection{Correspondence between spirals and facets} \label{subsec:spiralfacet}
We define $\bfx = (\lambda_0/m, 1)\in \bbA$.\index{x@$\bfx$} Let $\frakP_T$ be the set of spirals $\frakp_*$ such that $\frakp_* = \pre{\varepsilon}\frakp_*^{\mu}$ for some $\mu\in \bfX_*\left( T \right)_{\bfQ}$, \cfauto{subsec:spirals}. \par
Given a facet $\sigma\in \frakF$, we choose a point $y\in \sigma$. Since both $y$ and $\bfx$ lie in $\bbA$, the difference $\bfx - y$ is in $\bfX_*(T)_{\bfQ}$. Set $\mu_y = m\varepsilon\left( \bfx - y \right) \in \bfX_*\left( T \right)_{\bfQ}$. It gives rise to the spiral $\pre{\varepsilon}\frakp^{\mu_y}_*$, which does not depend on the choice of $y\in \sigma$, see \cite[3.4.4]{LYIII}. \par

We denote $\frakp^{\sigma}_* = \pre{\varepsilon}\frakp^{\mu_y}_*$\index{p@$\frakp^{\sigma}_*$} as well as $\fraku^{\sigma}_* = \pre{\varepsilon}\fraku^{\mu_y}_*$\index{u@$\fraku^{\sigma}_*$} and $\frakl^{\sigma}_* = \pre{\varepsilon}\frakl^{\mu_y}_*$\index{l@$\frakl^{\sigma}_*$} for any choice of $y\in \sigma$. We also denote $P^{\sigma}_0 = \exp\left( \frakp^{\sigma}_0 \right)$, $L^{\sigma} = \exp\left( \frakl^{\sigma} \right)$, $L^{\sigma}_0 = \exp\left( \frakl^{\sigma}_0 \right)$ and $U^{\sigma}_0 = \exp\left( \fraku^{\sigma}_0 \right)$; those are subgroups of $G$. \par

Recall that we have defined in~\autoref{subsec:extratori} a $\Ct$-action on $\frakp^{\sigma}_*$. The $T_{\diamond}$-weights appearing in $\frakp^{\sigma}_*$ can be characterised as follows: 
\begin{prop}\label{prop:Ppoids}
	An affine root $\alpha\in R_{\aff}$ appears in $\frakp^{\sigma}_*$ if and only if for some (equiv. every) $y\in \sigma$, we have
	\begin{itemize}
		\item[ ]
			$\alpha(y) \le 0$ in the case $\varepsilon = 1$;
		\item[ ]
			$\alpha(y) \ge 0$ in the case $\varepsilon = -1$.
	\end{itemize}
	In this case, the $\alpha$-weight space appears in $\frakp^{\sigma}_n$ for $n = m\,\alpha(\bfx)\in \bfZ$. \hfill\qedsymbol
\end{prop}
This proposition yields a combinatorial parametrisation of the set of $T$-stable $\varepsilon$-spirals.
\begin{coro}\label{coro:facet-spiral}
	The assignment $\sigma\mapsto \frakp^{\sigma}_*$ yields a bijection between $\frakF$ and $\frakP_T$. Moreover, $\sigma\le\tau$ holds if and only if $\frakp^{\tau}_*\subseteq\frakp^{\sigma}_*$.\hfill\qed
\end{coro}

\begin{rema}
	In other words, $\frakp^{\sigma}_*$ recovers the usual notion of parahoric subalgebra of the loop group $G(\!(\varpi^{-\varepsilon})\!)$ and $\frakp^{\sigma}_n$ is the $(n/m)$-weight space for the fractional cocharacter $\bfx = (\lambda_0 / m, 1)\in \bfX_*(T_{\diamond})_{\bfQ}$. The space $\bbA$ with the facet structure $\frakF$ is the apartment of the Bruhat--Tits building for $G(\!(\varpi^{-\varepsilon})\!)$ given by the parahoric subgroups containing $T$ and stabilised by the loop rotation $\Ct$.
\end{rema}

\subsection{Pseudo-Levi attached to relevant affine subspaces} \label{subsec:subsp-pslevi}
Let $\frakM^{\bfZ-\gr}_T$ be the set of quadruples $(M, M_0, \frakm, \frakm_*)$ with $M$ a pseudo-Levi subgroup of $G$ containing the maximal torus $T$ and $\frakm_*$ a $\bfZ$-grading on $\frakm = \Lie M$ which makes $\frakm$ a graded Lie algebra such that $T \subseteq M_0$, where $M_0 = \exp\left( \frakm_0 \right)$. There is bijective correspondence
\begin{equation*}\begin{aligned}
	\frakE\longleftrightarrow \frakM^{\bfZ-\gr}_T
\end{aligned}\end{equation*}
defined as follows: given any relevant subspace $\bbE\in\frakE$, choosing any facet $\sigma\in \frakF$ which spans $\bbE$ as affine subspace, we set 
\begin{equation*}\begin{aligned}
	\left(M^\bbE, M^\bbE_0,  \frakm^\bbE, \frakm^\bbE_*\right) = \left(L^{\sigma} , L^{\sigma}_0,\frakl^{\sigma}, \frakl^{\sigma}_* \right);
\end{aligned}\end{equation*}
then the splitting $\left(M^{\bbE}, M^{\bbE}_0, \frakm^{\bbE}, \frakm^{\bbE}_*\right)$ does not depend on the choice of $\sigma$, see \cite[3.4.7]{LYIII}. Similarly to the case of spiral, we can characterise the $T_{\diamond}$-weights appearing in $\frakm^{\bbE}_*$ as follows: 
\begin{prop}\label{prop:Mpoids}
	An affine root $\alpha\in R_{\aff}$ appears in $\frakm^{\bbE}_*$ if and only if $\alpha\mid_{\bbE} = 0$.  In this case, the $\alpha$-weight space appears in $\frakm^{\bbE}_n$ for $n = m\,\alpha(\bfx)$. \hfill\qedsymbol
\end{prop}

We can describe the root system of the reductive group $M^{\bbE}_{\diamond} = M^{\bbE} \rtimes \Ctm$ with respect to the maximal torus $T_{\diamond}$ as a subsystem of the affine root system $(\bbA, R_{\aff})$:

\begin{prop}\label{prop:centre}
	Given a relevant subspace $\bbE\in \frakE$, we put $R_{\bbE} = \{\alpha\in  R_{\aff}\;;\; \alpha\mid_{\bbE} = 0\}$. Denote $(M, \frakm_*) = (M^{\bbE}, \frakm^{\bbE}_*)$. Then the following statements hold:
	\begin{enumerate}[label=(\roman*)]
		\item\label{prop:centre-i}
			$R_{\bbE}$ coincides with the root system $R(M_{\diamond}, T_{\diamond})$ as subset of $\bfX^*(T_{\diamond})_{\bfQ}$.
		\item\label{prop:centre-ii}
			The vector subspace $\bbE_{\diamond}\subset \bbA_{\diamond}$ spanned by $\bbE$ coincides with $\bfX_*(Z(M_{\diamond}))_{\bfQ}$.
		\item\label{prop:centre-iii}
			The pointwise stabiliser $W_{\bbE} = \Stab_{W}(\bbE)$ coincides with the Weyl group $W_{M_{\diamond}} = W(M_{\diamond}, T_{\diamond})$.
		\item\label{prop:centre-iv}
			The stabiliser $W_{\bbE, \bfx} = \Stab_{W_{\bbE}}(\bfx)$  coincides with the Weyl group $W_{M_{0,\diamond}} = W(M_{0, \diamond}, T_{\diamond})$.
	\end{enumerate}
\end{prop}
\begin{proof}
	The statement~\ref{prop:centre-i} follows immediately from~\autoref{prop:Mpoids}. The statement~\ref{prop:centre-ii} follows from~\ref{prop:centre-i} by taking the intersection of the kernels of elements of $R_{\bbE} = R(M_{\diamond}, T_{\diamond})$.  \par
	Via the reflection action of $W_{M_{\diamond}}$ on $\bfX_*\left( T_{\diamond} \right)$, we may consider $W_{M_{\diamond}}$ as subgroup of $W$. Let $\kappa\in \frakA$ be an alcove such that $\ba\kappa\cap \bbE \neq\emptyset$. Then there is a subset $J^{\kappa}\subset \Delta^{\kappa}$ such that $\bbE = \left\{ y\in \bbA\;;\; \alpha(y) = 0,\; \forall \alpha\in J^{\kappa} \right\}$. Consequently, $J^{\kappa}$ is a base for the root system $R_{\bbE}$ and the subgroup $W_{\bbE} = W_{J^{\kappa}}$ is its Weyl group. We conclude that $W_{\bbE}$ coincides with $W(M_{\diamond}, T_{\diamond})$, whence~\ref{prop:centre-iii}. \par
	Now let $\bbE'$ be the smallest relevant affine subspace containing both $\bbE$ and $\bfx$. It follows that $\frakm^{\bbE'}_0 = \frakm^{\bbE}_0$ and $\frakm^{\bbE'}_n = 0$ for $n\neq 0$. Then~\ref{prop:centre-iii} applied to $(M^{\bbE'}, \frakm^{\bbE'}_*)$ yields~\ref{prop:centre-iv}.
\end{proof}

In particular, if we let $\bbE\in \frakE$ be the smallest relevant affine subspace containing $\bfx\in \bbA$, then by~\autoref{prop:Mpoids} we have $\frakm^{\bbE}_0 = \frakg_{\ubar 0}$ and $\frakm^{\bbE}_n = 0$ for $n\neq 0$. Hence $M^{\bbE}_0 = G_{\ubar 0}$ and~\autorefitem{prop:centre}{iv} yields the following:
\begin{coro}\label{coro:WGz}
	The subset $R_{\bfx} = \left\{ \alpha\in R_{\aff}\;;\; \alpha(\bfx) = 0 \right\}$ coincides with the root system $R(\Gzt, T_{\diamond})$ and the stabiliser $W_{\bfx} = \Stab_W(\bfx)$ coincides with the Weyl group $W(\Gzt,T_{\diamond})$. Moreover, the inclusion of pairs $(\Gz, T)\hookrightarrow (\Gzt, \Tt)$ induces an isomorphism $W(\Gz, T)\cong W_{\bfx}$. \hfill\qedsymbol
\end{coro}

\subsection{Relative affine root system attached to an admissible system} \label{subsec:cusp}
Let $\xi = \left(M,\frakm_*, \rmO, \scrC  \right)$\index{0@$(M, \frakm_*, \rmO, \scrC)$} be an admissible system on $\frakg_{\ubar \eta}$ as defined in~\autoref{subsec:admsys}. In particular, $\scrC$ is a $\Mtq$-equivariant cuspidal local system on $\rmO\subset \frakm$. There exists a relevant hyperplane $\bbE = \bbE^M$ \index{E@$\bbE^M$} such that $(M, \frakm_*) = (M^{\bbE}, \frakm^{\bbE}_*)$ as in~\autoref{subsec:subsp-pslevi}. We assume that $\dim \bbE^M > 0$. \par
The restriction of the affine root system $(\bbA, R_{\aff})$ to the subspace $\bbE^M$ yields an affine root system $(\bbE^M, R'_{\xi})$, where
$R'_{\xi} = \left\{ \alpha\mid_{\bbE^M_{\diamond}}\;;\; \alpha\in R_{\aff}\;;\; a\mid_{\bbE^M_{\diamond}}\neq 0 \right\}$. The root system $(\bbE^M, R'_{\xi})$ may not be reduced. Let $R_{\xi}\subset R'_{\xi}$ be the subset of indivisible roots. \par
We let $\Xi\subset \frakF$\index{Xi@$\Xi$} denote the set of facets which span $\bbE^M$ as affine subspace of $\bbA$. The elements of $\Xi$ are called $\bbE^M${\bf -alcoves}. Notice that by~\autoref{prop:Mpoids}, an $\bbE^M$-alcove $\nu\in \Xi$ is characterised by the property that $\frakl^{\nu}_* = \frakm_*$. Let $W_{M} = W(M_{\diamond}, T_{\diamond})\subset W$\index{W@$W_{M}$} be the Weyl group of $M_{\diamond}$. By~\autoref{prop:centre}, it is also the pointwise stabiliser of $\bbE^M$. As we have explained in~\autoref{subsec:fixedcomp}, the set $\Xi$ will be used to parametrise the fixed components of the partial affine Springer resolution $\dot\frakg_{\aff}$. \par

The main properties, proven in~\cite[2.4]{LYIII}, are the following:
\begin{prop}\label{prop:Wxi}
	The following statements hold:
	\begin{enumerate}
		\item\label{prop:Wxi-i}
			$R_{\xi}$ is an irreducible affine root system on $\bbE^M$. Let $W_{\xi}$ denote its Weyl group.
		\item\label{prop:Wxi-ii}
			$W_{\xi}$ is isomorphic to $N_W(W_{M}) / W_M$. 
		\item\label{prop:Wxi-iii}
			Given any $\bbE^M$-facet $\nu\in \Xi$, there is a unique base $\Delta^{\nu}_{\xi}\subset R_{\xi}$ such that the elements of $\Delta^{\nu}_{\xi}$ take positive values on $\nu$.
		\item\label{prop:Wxi-iv}
			$W_{\xi}$ acts simply transitively on $\Xi$. Consequently, all the $\bbE^M$-alcoves are of the same type.
	\end{enumerate}\hfill\qedsymbol
\end{prop}

By~\autoref{prop:Wxi}~\ref{prop:Wxi-iv}, we can denote the type of $\bbE^M$-facets by $I_\xi\subset \Delta$, so that $\Xi\subset \frakF_{I_\xi}$.  \par

\subsection{Symmetry of \texorpdfstring{$\xi$}{ξ}}\label{subsec:conjxi}

We keep the admissible system $\xi$ as in~\autoref{subsec:cusp} and the type $I_\xi\subset \Delta$. Let $\bfx^M\in \bbE^M$ denote the image of $\bfx$ under the orthogonal projection $\bbA\to \bbE^M$. \index{x@$\bfx^M$} We denote $W_{\xi, \bfx} = \Stab_{W_{\xi}}(\bfx^M)$.\index{W@$W_{\xi,\bfx}$} 

\begin{prop}\label{prop:conjxi}
	The following statements hold:
	\begin{enumerate}
		\item\label{prop:conjxi-i}
			The action of $N_{G_{\ubar 0, \diamond}}(Z_{M_\diamond})$ on $Z_{M_{\diamond}}$ by conjugation yields an isomorphism $N_{G_{\ubar 0, \diamond}}(Z_{M_\diamond}) / M_{0,\diamond} \cong W_{\xi, \bfx}$.
		\item\label{prop:conjxi-ii}
			The adjoint action of $N_{G_{\ubar 0, \diamond}}(Z_{M_\diamond})$ on $\frakm_{\eta}$ preserves the $M_{0,\diamond}$-orbit $\rmO_{\eta}\subset \frakm_{\eta}$.
		\item\label{prop:conjxi-iii}
			The cuspidal local system $\scrC$ has a $N_{G_{\ubar 0, \diamond}}(Z_{M_\diamond})$-equivariant structure.
	\end{enumerate}
\end{prop}
\begin{proof}
	Let $\bbE'\in \frakE$ be the smallest relevant affine subspace which contains $\bfx^M$. It follows that $\bbE'\subseteq\bbE$ and $(M', \frakm'_*):= (M^{\bbE'}, \frakm^{\bbE'}_*)$ defined in~\autoref{subsec:subsp-pslevi} is a $\bfZ$-graded pseudo-Levi subgroup of $G$ which contains $(M, \frakm_*)$ as graded Levi subgroup. We may view $\xi$ as an admissible system on $\frakm'_{\diamond}$ in the sense of~\cite{lusztig84}. We have the isomorphisms of groups below.
	\begin{enumerate}
		\item
			By~\autoref{coro:WGz}, the Weyl group $W(\Gzt, \Tt)$ coincides with $W_{\bfx}$.
		\item
			We can deduce from it that $N_{\Gzt}(Z_{\Mt}) / \Mzt$ is isomorphic to $W_{\xi, \bfx} = \Stab_{W_{\xi}}(\bfx^M)$.
		\item
			$W_{\sigma, \xi} = N_{M'_{\diamond}}(Z_{M_{\diamond}}) / M_{\diamond}$ coincides with the pointwise stabiliser of $\sigma$ in $W_{\xi}$;
		\item
			$W_{\sigma, \xi, \bfx} = N_{M'_{0,\diamond}}(Z_{M_{\diamond}}) / M_{0,\diamond}$ coincides with the stabiliser of $\bfx^M\in \bbE^M$ in $W_{\sigma, \xi}$.
	\end{enumerate}
	By the choice of $\sigma$, we have $W_{\sigma, \xi, \bfx} =  W_{\xi, \bfx}$. Thus we deduce $N_{G_{\ubar 0,\diamond}}(Z_{M_{\diamond}}) = N_{M'_{0,\diamond}}(Z_{M_{\diamond}})$. \par
	Since $(\Mt, \rmO, \scrC)$ is an admissible system on $\frakm'_{\diamond}$, by~\cite[9.2]{lusztig84} the action of $N_{M'_{\diamond}}(Z_{\Mt})$ by conjugation on $\frakm'_{\diamond}$ preserves $\rmO\subset\frakm_{\diamond}$ and the cuspidal local system $\scrC$ has a $N_{M'_{\diamond}}(Z_{\Mt})$-equivariant structure. The restriction of this action to the subgroup $N_{M'_{0,\diamond}}(Z_{\Mt})$ preserves $\rmO_{\eta}$ and induces an action on $\scrC_{\eta}$. Since $N_{M'_{0,\diamond}}(Z_{\Mt}) = N_{\Gzt}(Z_{\Mt})$, the results follow.
\end{proof}

\subsection{Canonical relative affine Weyl group}\label{subsec:canrelweyl}

By~\autoref{prop:Wxi}, the considerations of~\autoref{subsec:canweyl} and~\autoref{subsec:facetcox} can be applied to $\bbE^M$ with the alcove structure $\frakF(\bbE^M) = \left\{ \sigma\in \frakF\;;\; \sigma\subset \bbE^M \right\}$:
\begin{itemize}
	\item
		Each $\bbE^M$-alcove $\nu\in \Xi$ gives rise to a base $\Delta^{\nu}_{\xi}\subset R_{\xi}$ and for any pair $\nu,\nu'\in \Xi$, the $W_{\xi}$-action gives a canonical isomorphism of based affine root systems~\footnote{We omit the set of roots $R_{\xi}$ in the datum $(\bbE_{\xi,\diamond}, \Delta^{\nu}_{\xi})$ because it is determined by the Euclidean structure on $\bbE_{\xi}$ and the base $\Delta^{\nu}\subset \bbE_{\xi,\diamond}^*$.} $(\bbE^M_{\diamond},\Delta^\nu_{\xi})\cong (\bbE^M_{\diamond},\Delta^{\nu'}_{\xi})$. We define the {\bf canonical based relative affine root system} to be $(\bbE_{\xi,\diamond},\Delta_{\xi}) = \varprojlim_{\nu\in \Xi}(\bbE^M_{\diamond}, \Delta^{\nu}_{\xi})$. Set $\bbE_{\xi} = \delta^{-1}(1)\subset \bbE_{\xi,\diamond}$, which is a Euclidean affine space.\index{E@$\bbE_{\xi}, \bbE_{\xi, \diamond}$} \index{W@$\calW_{\xi}$}\index{D@$\Delta_{\xi}$} 
	\item
		We define the {\bf canonical relative affine Weyl group} $(\calW_{\xi}, \Delta_{\xi})$ to be the Weyl group of $(\bbE_{\xi,\diamond},\Delta_{\xi})$. It acts by orthogonal reflections on $\bbE_{\xi}$ and this action extends linearly to $\bbE_{\xi, \diamond}$. For each $\nu\in \Xi$, there is a canonical isomorphism $(\calW_{\xi}, \Delta_{\xi}) \cong (W_{\xi}, \Delta^{\nu}_{\xi})$.
	\item
		The $\calW$-action on $\frakA$ induces a $\calW_{\xi}$-action on $\frakF_{I_\xi}\cong \frakA/ \calW_{I_\xi}$. The latter action restricts to a simply transitive $\calW_{\xi}$-action on the subset $\Xi\subset \frakF_{I_\xi}$ which commutes with the $W_{\xi}$-action. 
	\item
		For each subset $J\subsetneq \Delta_{\xi}$, there is a boundary map $\partial_J:\Xi\to \frakF(\bbE^M)$ of {\bf relative type} $J$. Put $\Xi_J = \partial_J(\Xi)$\index{Xi@$\Xi_J$} (in particular, $\Xi = \Xi_{\emptyset}$). There is a decomposition $\frakF(\bbE^M) = \bigsqcup_{J\subsetneq \Delta_{\xi}}\Xi_J$. 
	\item
		For each subset $J\subsetneq \Delta_{\xi}$, the map $\partial_J$ induces a bijection of $W_{\xi}$-sets $\Xi / \calW_J \cong \Xi_J$.
\end{itemize}

\begin{prop}\label{prop:relW}
	Suppose that $\dim \bbE^M > 0$. Write $I = I_\xi$. The following statements hold:
	\begin{enumerate}[label=(\roman*)]
		\item
			$\calW_{\xi}$ is isomorphic to $N_{\calW}(\calW_I) / \calW_I$.
		\item
			The short exact sequence 
			\[
				1\to \calW_I\to N_{\calW}(\calW_I)\to \calW_{\xi}\to 1
			\]
			has a canonical splitting which takes any element $w\in \calW_{\xi}$ to the (unique) shortest element of the pre-image of $w$ in $N_{\calW}(\calW_I)$. We may thus regard $\calW_{\xi}$ as a subgroup of $\calW$.
		\item
			Let $\ell:\calW\to \bfN$ be the length function of the Coxeter group $(\calW, \Delta)$ and let $\ell_{\xi}: \calW_{\xi}\to \bfN$\index{l@$\ell_{\xi}$} be that of $(\calW_{\xi}, \Delta_{\xi})$. If we regard $\calW_{\xi}$ as a subgroup of $\calW$, then for $w,w'\in \calW_{\xi}$ we have
			\[
				\ell(ww') = \ell(w) + \ell(w') \Longleftrightarrow \ell_{\xi}(ww') = \ell_{\xi}(w) + \ell_{\xi}(w').
			\]
	\end{enumerate}
\end{prop}
\begin{proof}
	These statements are proven in~\cite[5.9]{lusztig76} for finite root systems. However, the same arguments work for affine root systems and Coxeter groups in general, provided that the parabolic subgroup $\calW_{I}$ is finite and satisfies the condition of~\cite[5.7.1]{lusztig76}.
\end{proof}

\begin{rema}
	A complete list of the types $I_{\xi}\subset \Delta$ with the relative affine root systems $(\bbE_{\xi}, \Delta_{\xi})$ is given in~\cite[\S 6--\S 7]{lusztig95c} for $(\bbA, \Delta)$ untwisted and~\cite[\S 11]{lusztig02} for $(\bbA, \Delta)$ twisted, where the subset $I_{\xi}$ is indicated as boxed vertices in the Dynkin diagram of $(\bbA, \Delta)$ and the relative affine root system $(\bbE_{\xi}, \Delta_{\xi})$ is indicated in the $\flat-\sharp$-diagrams.
\end{rema}

\section{Construction of \texorpdfstring{$\Phi$}{Φ}}\label{sec:phi}
We keep the assumptions of~\autoref{sec:grad} and~\autoref{sec:affineCox}. In particular, there is an admissible system $\xi = \left( M, \frakm_*, \rmO, \scrC \right)$ on $\frakg_{\ubar \eta}$ \cfauto{subsec:cusp}, a cocharacter $\lambda_0\in \bfX_*\left( T \right)$ which lifts the $\bfZ/m$-grading on $\frakg$, a sign $\varepsilon\in \left\{ 1, -1 \right\}$ and $\bfx = (\lambda_0 / m, 1)\in \bbA$.\par

Following the strategy of \cite{vasserot05} and \cite{LYIII}, we will construct a homomorphism $\Phi:\bfH_{\xi} \to \ha\calH$ from the degenerate double affine Hecke algebra (dDAHA) to the convolution algebra.

\setcounter{subsection}{-1}
\subsection{Notation for parabolic and spiral inductions}\label{subsec:notind}
In~\autoref{subsec:spiralfacet}, we have introduced a bijection between $\frakF$ and the set of spirals $\frakp_*$ such that $T\subset P_0$. We have attached to each facet $\sigma\in \frakF$ a $\varepsilon$-spiral $\frakp^{\sigma}_*$ and a splitting $\frakl^{\sigma}_*$. In the rest of the article, we will abbreviate the spiral induction (\autoref{subsec:indspirale}) by
\begin{equation*}\begin{aligned}
	\Ind_{\sigma} = \Ind^{\frakg_{\ubar \eta}}_{\frakp^\sigma_\eta}:\Db_{L^{\sigma}_{0,\diamond,q}}(\frakl^{\sigma}_{\eta})\to \Db_{\Gztq}(\frakg_{\ubar\eta}).
\end{aligned}\end{equation*} \index{Ind@$\Ind_{\sigma}$}
\par

By~\autoref{prop:centre}~\ref{prop:centre-i}, the root system $R(L^{\sigma}_{\diamond}, T_{\diamond})$ can be identified with the root subsystem of $\alpha\in R_{\aff}$ such that $\alpha\mid_{\sigma} = 0$. Given another facet $\tau\in \frakF$ such that $\sigma\le \tau$ (\autoref{subsec:facetcox}), we have $\frakp^{\tau}_*\subseteq \frakp^{\sigma}_*$ by~\autoref{coro:facet-spiral}. Set $\frakp^{\sigma\le \tau}_* = \frakp^{\tau}_*\cap\frakl^{\sigma}_*$, so that $\frakp^{\sigma\le\tau}_*$ is a parabolic subalgebra of $\frakl^{\sigma}_*$.\index{p@$\frakp^{\sigma\le\nu}_*$}  If we pick any point $y\in \tau$, then by~\autoref{prop:Ppoids}, the $T_{\diamond}$-weights of $\frakp^{\sigma\le \tau}_*$ are those roots $\alpha\in R(L^{\sigma}_{\diamond}, T_{\diamond})$ such that $\varepsilon\cdot\alpha(y) \le 0$. We will abbreviate the $\bfZ$-graded parabolic induction (\autoref{subsec:Zgr}) by
\begin{equation*}\begin{aligned}
	\Ind^{\sigma}_{\tau} = \Ind^{\frakl^{\sigma}_{\eta}}_{\frakp^{\sigma\le \tau}_\eta}:\Db_{L^{\tau}_{0,\diamond,q}}(\frakl^{\tau}_{\eta})\to \Db_{L^{\sigma}_{0,\diamond,q}}(\frakl^{\sigma}_{\eta}). 
\end{aligned}\end{equation*} \index{Ind@$\Ind^{\sigma}_{\tau}$}

\subsection{Degenerate double affine Hecke algebra}\label{subsec:daha}
Recall the canonical relative based root system $(\bbE_{\xi}, \Delta_{\xi})$ defined in~\autoref{subsec:canrelweyl}. We regard $\Delta_{\xi}$ as linear functions on the linearisation $\bbE_{\xi,\diamond}$. The relative affine Weyl group $\calW_{\xi}$ acts on $\bbE_{\xi,\diamond}$ by orthogonal reflections. It induces a $\calW_{\xi}$-action on the algebra $\bfS_{\xi}$ in which $\calW_{\xi}$ acts trivially on $u$.  \par

For each relative affine simple root $\alpha\in \Delta_{\xi}$, we define as in~\cite{LYIII} an integer $c_{\alpha}\in \bfZ_{\ge 2}$. Pick any $\bbE^M$-alcove $\nu\in \Xi$. The canonical isomorphism $(\bbE_{\xi},\Delta_{\xi})\cong (\bbE^M,\Delta^{\nu}_{\xi})$ sends $\alpha$ to a relative root $\alpha^{\nu}\in \Delta^{\nu}_{\xi}$, which cuts out a relative hyperplane $H\subset \bbE^M$. Then $H\subset \bbA$, being a relevant subspace, gives rise to a pseudo-Levi subalgebra $\frakl$ which contains $\frakm$ as Levi subalgebra. The connected component of $\bbE^M_{\bfR} \setminus H_{\bfR}$ on which $\alpha^{\nu}$ takes positive values yields a parabolic subalgebra $\frakp^+ = \frakm\oplus \fraku^+ \subset \frakl$. Pick any $e\in \rmO$. Then $c_{\alpha}$ is defined to be the biggest integer $c \ge 2$ such that $(\ad_e)^{c-2}\neq 0$ on $\fraku^+$.  \par

We define the {\bf degenerate double affine Hecke algebra} (dDAHA) $\bfH_{\xi}$ attached to the admissible system $\xi$ with the based relative affine root system $(\bbE_{\xi}, \Delta_{\xi})$. It is the associative algebra over the polynomial ring $\bfk[u]$ generated by $\left\{ x^{\mu} \right\}_{\mu\in \bbE_{\xi,\diamond}^*}$ and $\left\{ s_{\alpha} \right\}_{\alpha\in \Delta_{\xi}}$ subject to the following relations for $\mu,\nu\in \bbE_{\xi, \diamond}^*$, $r\in \bfQ$ and $\alpha\in \Delta_{\xi}$:
\begin{equation*}\begin{aligned}
	rx^{\mu} = x^{r\mu},\quad x^{\mu} + x^{\nu} = x^{\mu + \nu},\quad
	\bfk[ s_{\alpha}\;;\; \alpha\in \Delta_{\xi}] \cong \bfk \calW_{\xi} \\
	s_{\alpha}x^{\mu} - x^{s_\alpha(\mu)}s_{\alpha} = u c_{\alpha} \langle \mu,\alpha^{\vee}\rangle. \\
\end{aligned}\end{equation*}
The family $(c_{\alpha})_{\alpha\in \Delta_{\xi}}$ is usually called the {\itshape parameters} of $\bfH_{\xi}$. \par

Let $\bfk[\bbE_{\xi,\diamond}]$ be the algebra of $\bfk$-value polynomial functions on $\bbE_{\xi,\diamond}$. The subalgebra of $\bfH_{\xi}$ generated by the set $\left\{ x^{\mu} \right\}_{\mu\in \bbE_{\xi,\diamond}^*}$ is isomorphic to $\bfk[\bbE_{\xi,\diamond}]$ in the obvious way.  Therefore we may view $\bfk[\bbE_{\xi,\diamond}]$ as a subalgebra of $\bfH_{\xi}$ and denote $\mu = x^{\mu}\in \bfH_{\xi}$ for $\mu\in \bbE_{\xi,\diamond}^*$. The dDAHA $\bfH_{\xi}$\index{H@$\bfH_{\xi}$}, as vector space, can be written as a tensor product 
\[
	\bfH_{\xi} = \bfS_{\xi}\otimes \bfk\calW_{\xi}
\]
of two subalgebras: the polynomial subalgebra $\bfS_{\xi} = \bfk[\bbE_{\xi,\diamond}]\otimes \bfk[u]$\index{S@$\bfS_{\xi}$} and the group ring of the canonical affine Weyl group $\bfk \calW_{\xi}$. They satisfy the following commutation laws:
\begin{equation*}\begin{aligned}
	s_\alpha\; f - s_\alpha(f)\; s_\alpha = uc_\alpha \frac{f - s_\alpha(f)}{\alpha}, \quad \alpha\in \Delta_{\xi},\quad f\in \bfS_{\xi}
\end{aligned}\end{equation*}
The elements $u,\delta\in \bfS_{\xi}$ are central in $\bfH_{\xi}$. 

\begin{rema}
	The class of dDAHAs which can be constructed in the present setting is limited for non-simply laced root systems. Since the constants $c_\alpha$ are certain integers determined by the cuspidal pair $(\rmO, \scrC)$, only certain integral proportions between parameters can appear. In the list of G. Lusztig~\cite[\S 6--\S 7]{lusztig95c}~\cite[\S 11]{lusztig02}, the number $c_{\alpha}$ for $\alpha\in \Delta_{\xi}$ corresponds to the number $A$ for in the vertex $\sharp^{A\times B}_k$ or $\flat^{A\times B}_k$ indicated in the {\itshape $\flat-\sharp$-diagrams}.
\end{rema}

\subsection{Extension algebra \texorpdfstring{$\ha\calH$}{H}} \label{subsec:indcusp}

We define the {\bf Lusztig sheaf} for the $\bbE^M$-alcove $\nu\in \Xi$ to be 
\[
	\bfI^{\nu}=\Ind_{\nu}\scrC_{\eta} \in \Db_{\Gztq}(\frakg_{\ubar\eta}), 
\]\index{I@$\bfI^{\nu}$}
 We define the set of $W_{\xi,\bfx}$-conjugacy classes $\ubar\Xi = W_{\xi, \bfx} \backslash \Xi$\index{Xi@$\ubar\Xi$}. Where $W_{\xi, \bfx}$ is as in~\autoref{subsec:conjxi}.
 We show that $\bfI^{\nu}$ depends only on the class $\ubar\nu$ in $\ubar \Xi$. By~\autorefitem{prop:conjxi}{iii}, we may view the cuspidal local system $\scrC_{\eta}$ as $W_{\xi,\bfx}$-equivariant local system over the stack $[\rmO_{\eta} / \Mztq]$. If $\nu,\nu'\in \Xi$ are such that $\ubar \nu = \ubar \nu'$ (i.e. if they are in the same $W_{\xi,\bfx}$-orbit), let $w\in W_{\xi,\bfx}$ be such that $w\nu = \nu'$. Then $w$ gives a stack automorphism $\Ad_w:[\frakm_{\eta} / \Mztq]\cong [\frakm_{\eta} / \Mztq]$. The $W_{\xi,\bfx}$-equivariance of $\scrC_{\eta}$ gives an isomorphism $\Ad_{w}^{*}\scrC_{\eta} \cong \scrC_{\eta}$. Consequently, we have canonical isomorphisms
\[
	\bfI^{\nu'} = \Ind_{\nu'}\scrC_{\eta} \cong \Ind_{\nu}\Ad_{w}^*\scrC_{\eta} \cong \Ind_{\nu}\scrC_{\eta}= \bfI^{\nu}.
\]
\index{I@$\bfI^{\nu}$}

\begin{prop}\label{prop:BBDG}
				For $\ubar\nu\in \ubar\Xi$, the following statements hold
				\begin{enumerate}[label=(\roman*)]
								\item\label{prop:BBDG-ii}
									We have $\bfI^{\nu} \cong \bigoplus_{k\in \bfZ} \pH^k\bfI^{\nu}[-k]$ and each factor $\pH^k\bfI^{\nu}$ is a semi-simple perverse sheaf. 
								\item\label{prop:BBDG-i}
									If $\varepsilon = \eta / |\eta|$, then the complex $\bfI^{\nu}$ is supported on the nilpotent cone, i.e. $\bfI^{\nu}\in \Db_{\Gztq}\left( \frakg_{\ubar \eta}^{\nil} \right)$.
				\end{enumerate}
\end{prop}
\begin{proof}
	The statement~\ref{prop:BBDG-ii} follows from the Be\u \i linson--Bernstein--Deligne--Gabber decomposition theorem and the purity of $\scrC$~\cite[1.4]{lusztig95b}. The statement~\ref{prop:BBDG-i} follows from to the assumption that $\varepsilon = \eta / |\eta|$ and~\eqref{equa:nil}.
\end{proof}

Define the following space
\begin{equation*}\begin{aligned}
	\ha\calH = \prod_{\nu'\in \ubar\Xi}\bigoplus_{\nu\in \ubar\Xi}\Ext_{\Gztq}^*\left( \bfI^{\nu'},\bfI^{\nu} \right)_{0}.
\end{aligned}\end{equation*}\index{H@$\ha\calH$}
Recall that the index $0$ means the completion at the augmentation ideal $\rmH^{>0}_{\Gztq}$~\cfauto{subsec:compl}. The Yoneda product gives a ring structure on the space $\ha\calH$. We will introduce a topology on it in~\autoref{sec:density}.
\begin{rema}
	Although the set $\Xi$ (and hence $\ubar\Xi$) is infinite, there is only a finite number of possibilities for the subspace $\frakp^{\nu}_{\eta}$ for $\nu\in \Xi$, on which depends the functor $\Ind_{\nu}$; therefore, there is only a finite number of isomorphism classes of the complexes $\bfI^{\nu}$. However, we needed to take the sum over the infinite set $\ubar \Xi$ in order to construct an action of the dDAHA $\bfH_{\xi}$ on it. This phenomenon was first observed in~\cite[3.4]{VV09} and was used to study to the dimension of simple $\bfH_{\xi}$-modules.
\end{rema}

\subsection{Graded affine Hecke algebras and \texorpdfstring{$\Phi_{J}$}{ΦJ}}\label{subsec:affinehecke}
Let $J\subsetneq \Delta_{\xi}$. We have a parabolic subgroup $\left( \calW_{\xi, J}, J \right)$ of the canonical relative Weyl group $\left( \calW_{\xi}, \Delta_{\xi} \right)$. We define $\bfH_{\xi,J}$\index{H@$\bfH_{\xi,J}$} to the subalgebra of $\bfH_{\xi}$ generated by $\bfS_{\xi}$ and $\{s_a\}_{a\in J}$. As vector space, it admits a decomposition $\bfH_{\xi,J} \cong \bfk \calW_{\xi,J}\otimes \bfS_{\xi}$. In the case where $J = \emptyset$, it recovers the polynomial algebra $\bfH_{\xi,\emptyset} = \bfS_{\xi}$.  \par

Recall the set $\Xi_J = \partial_J \Xi \subset \frakF(\bbE^{M})$ of $\bbE^M$-facets of (relative) type $J$ introduced in~\autoref{subsec:canrelweyl}. For each $\sigma\in \Xi_{J}$, we define $\Xi^{\sigma} =  \left\{ \nu\in \Xi\;;\; \partial_{J}\nu = \sigma \right\}$\index{Xi@$\Xi^{\sigma}$}, so that there is a partition $\Xi = \bigsqcup_{\sigma\in \Xi_J}\Xi^{\sigma}$. In other words, $\Xi^{\sigma}$ is the subset of $\Xi$ consisting of those $\bbE^M$-alcoves whose closure contains $\sigma$. The stabiliser $W_{\xi, \sigma} = \Stab_{W_{\xi}}(\sigma)$ acts simply transitively on $\Xi^{\sigma}$. Let $\ubar\Xi^{\sigma} = W_{\xi,\sigma,\bfx}\backslash \Xi^{\sigma}$, where $W_{\xi, \sigma, \bfx} = W_{\xi,\sigma}\cap W_{\xi,  \bfx}$. \par

With the datum $(L^{\sigma}_{\diamond}, \frakl^{\sigma}_{\diamond, *}, \xi)$, we are in the situation of~\autoref{subsec:Zgr}. The group $\calW_{\xi, J}$ can be identified with the canonical relative Weyl group of $L^{\sigma}_{\diamond}$ with respect to $\xi$. Moreover, by~\autoref{prop:centre}~\ref{prop:centre-ii}, the vector space $\bbE^M_{\diamond}$ coincides with $\bfX_*(Z_{M_{\diamond}})_\bfQ$ and $\Xi^{\sigma}$ can be identified with the set denoted by $\Xi$ in~\autoref{subsec:coxZ}. Choosing any $\nu_0\in \Xi^\sigma$, we have an isomorphism $(\bbE_{\xi,\diamond}, \Delta_{\xi}) \cong \left( \bbE^M_{\diamond}, \Delta_{\xi}^{\nu_0} \right)$. Under this isomorphism, the based root subsystem $(\bbE_{\xi,\diamond}, J)$ is sent to a based root system $\left( \bbE^M_{\diamond}, \Delta_{\xi}^{\sigma} \right)$ with $\Delta^{\sigma}_{\xi}\subset \Delta^{\nu_0}_{\xi}$. Thus we can identify the subalgebra $\bfH_{\xi, J}\subset \bfH_{\xi}$ with the graded affine Hecke algebra (which was denoted by $\underline\bfH_{\xi}$ in~\autoref{subsec:rappel}) attached to $(\bbE^M_{\diamond}, \Delta^{\sigma}_{\xi})$ via the isomorphism $(\bbE_{\xi,\diamond}, \Delta_{\xi}) \cong ( \bbE^M_{\diamond}, \Delta_{\xi}^{\nu_0} )$. \par

For $\sigma\in \Xi_J$, define the parabolic version of $\ha\calH$ attached to $\sigma$:
\[
	\ha\calH_{\sigma} = \bigoplus_{\ubar\nu,\ubar\nu'\in \ubar\Xi^{\sigma}} \Ext^*_{L^{\sigma}_{\diamond,0,q}}\left(\Ind^{\sigma}_{\nu'}\scrC_{\eta} , \Ind^{\sigma}_{\nu}\scrC_{\eta} \right)_{0}.
\]\index{H@$\ha\calH_{\sigma}$}
Recall the element $\bfx = (\lambda_0/m, 1)\in \bbA$. We consider the equivariant localisation by action of the fractional cocharacter $(\bfx, \eta/2m) = (2\lambda_0, 2m, \eta)/2m\in \bfX_*(T_{\diamond}\times \Cq)$. Notice that $(\frakl^{\sigma})^{(\bfx, \eta/2m)} = (\frakl^{\sigma})^{(2\lambda_0, 2m, \eta)}= \frakl^{\sigma}_\eta$. The constructions of~\autoref{subsec:rappel} can thus be applied~\footnote{Our group is now $L^{\sigma}_{\diamond}$ with maximal torus $\Tt$. The $\bfZ$-grading on $\frakl^{\sigma}$ is given by $\lambda_{\diamond} = (\lambda_0, m)$ with $\delta(\lambdat) = m$. Notice that the completion is taken at $(\bfx, \eta/2m)$ instead of $(\lambda_{\diamond},\eta/2)$. It is fine because they are proportional.} to the datum $(L^{\sigma}_{\diamond}, \frakl^{\sigma}_{\diamond,*}, \xi)$. In particular, there is a homomorphism $\Phi_{\sigma}$ defined by means of equivariant localisation
\begin{equation}\begin{aligned}
	\begin{tikzcd}
		\bfH_{\xi, J}\arrow{d}{\cong}\arrow{r}{\Phi_{\sigma}}&\bigoplus_{\ubar\nu,\ubar \nu'\in \ubar\Xi^{\sigma}}\Ext^*_{L^{\sigma}_{\diamond,0,q}}\left(\Ind^{\sigma}_{\nu'}\scrC_{\eta} , \Ind^{\sigma}_{\nu}\scrC_{\eta} \right)_{0}\;= \ha\calH_{\sigma}\\
		\Ext^*_{L^{\sigma}_{\diamond,q}}(\Ind^{\frakl^{\sigma}}_{\frakp^{\sigma\le\nu_0}}\scrC, \Ind^{\frakl^{\sigma}}_{\frakp^{\sigma\le\nu_0}}\scrC)\arrow[hookrightarrow]{r}& \Ext^*_{L^{\sigma}_{\diamond,q}}(\Ind^{\frakl^{\sigma}}_{\frakp^{\sigma\le\nu_0}}\scrC, \Ind^{\frakl^{\sigma}}_{\frakp^{\sigma\le\nu_0}}\scrC)_{(\bfx, \eta/2m)}\arrow{u}{\cong}[swap]{\mathrm{loc}}
	\end{tikzcd}
\end{aligned}\end{equation}
\index{Ph@$\Phi_\sigma$}
which is independent of the choice of $\nu_0$, see the proof of~\autoref{prop:compatibleZ}.

We set 
\begin{equation*}\begin{aligned}
	\Phi_{J} = \left( \Phi_{\sigma} \right)_{\sigma\in \Xi_J}: \bfH_{\xi, J}\to \prod_{\ubar\sigma\in \ubar\Xi_J}\ha\calH_{\sigma},\quad \text{where $\ubar\Xi_J = W_{\xi, \bfx}\backslash \Xi_J$}.
\end{aligned}\end{equation*}\index{Ph@$\Phi_J$}

\subsection{Construction of \texorpdfstring{$\Phi$}{Φ}}\label{subsec:conPhi}

Let $K\subseteq J \subsetneq \Delta_{\xi}$. Given $\sigma\in \Xi_J$, we let $\Xi_K^{\sigma} = \left\{ \tau\in \Xi_K\;;\; \partial_J \tau = \sigma \right\}$.\index{Xi@$\Xi_J^{\sigma}$} For each $\sigma\in \Xi_J$, $\tau\in \Xi^{\sigma}_K$ and $\nu,\nu'\in \Xi^{\tau}$, there is a homomorphism given by the functoriality of $\Ind^{\sigma}_{\tau}$:
\begin{equation*}\begin{aligned}
	\Ext^*_{L^{\tau}_{0,\diamond,q}}\left(\Ind^{\tau}_{\nu'}\scrC , \Ind^{\tau}_{\nu}\scrC \right)_{0}\to  \Ext^*_{L^{\sigma}_{0,\diamond,q}}\left(\Ind^{\sigma}_{\nu'}\scrC , \Ind^{\sigma}_{\nu}\scrC \right)_{0}.
\end{aligned}\end{equation*}
Summing over $\ubar\nu,\ubar\nu'\in \ubar\Xi^{\tau}$, we get
\begin{equation*}\begin{aligned}
	\ha\calH_{\tau} = \bigoplus_{\ubar\nu,\ubar\nu'\in\ubar\Xi^{\tau}}\Ext^*_{L^{\tau}_{0,\diamond,q}}\left(\Ind^{\tau}_{\nu'}\scrC , \Ind^{\tau}_{\nu}\scrC \right)_{0}\to  \bigoplus_{\ubar\nu,\ubar\nu'\in\ubar\Xi^{\tau}}\Ext^*_{L^{\sigma}_{0,\diamond,q}}\left(\Ind^{\sigma}_{\nu'}\scrC , \Ind^{\sigma}_{\nu}\scrC \right)_{0}.
\end{aligned}\end{equation*}
On the other hand, the partition $\Xi^{\sigma} = \bigsqcup_{\tau\in \Xi^{\sigma}_K}\Xi^{\tau}$ yields an inclusion of subring
\begin{equation*}\begin{aligned}
	\prod_{\ubar\tau\in \ubar\Xi^{\sigma}_K}\bigoplus_{\ubar\nu,\ubar\nu'\in \ubar\Xi^{\tau}}\Ext^*_{L^{\sigma}_{0,\diamond,q}}\left(\Ind^{\sigma}_{\nu'}\scrC , \Ind^{\sigma}_{\nu}\scrC \right)_{0}\subset \ha\calH_{\sigma}.
\end{aligned}\end{equation*}
Combining the above two maps, we obtain an ring homomorphism
\begin{equation*}\begin{aligned}
	\psi^{\sigma}_K:\prod_{\tau\in \ubar\Xi^{\sigma}_K}\ha\calH_\tau\to \ha\calH_\sigma.
\end{aligned}\end{equation*}\index{ps@$\psi^{\sigma}_K$}
\autoref{prop:compatibleZ} can be reformulated as follows:
\begin{lemm}\label{lemm:PhiJPhiK}
	The following diagram commutes
	\[
		\begin{tikzcd}[column sep = 50pt]
			\bfH_{\xi,K}\arrow{r}{\left(\Phi_{\tau}\right)_{\ubar\tau\in\ubar\Xi^{\sigma}_K}}\arrow[hookrightarrow]{d} & \prod_{\ubar\tau\in\ubar\Xi^{\sigma}_K}\ha\calH_{\tau} \arrow{d}{\psi^{\sigma}_K} \\
			\bfH_{\xi,J}\arrow{r}{\Phi_{\sigma}} & \ha\calH_\sigma
		\end{tikzcd}.
	\]
	\hfill\qedsymbol
\end{lemm}
Taking product over $\ubar\sigma\in\ubar\Xi_J$, we obtain a commutative square of rings
\[
	\begin{tikzcd}[column sep = 50pt]
		\bfH_{\xi,K}\arrow{r}{\Phi_K}\arrow[hookrightarrow]{d} & \prod_{\ubar\tau\in\ubar\Xi_K}\ha\calH_{\tau} \arrow{d}{\prod_{\ubar\sigma}\psi^{\sigma}_K} \\
		\bfH_{\xi,J}\arrow{r}{\Phi_{J}}  & \prod_{\ubar\sigma\in\ubar\Xi_J}\ha\calH_{\sigma} 
	\end{tikzcd}.
\]
Similarly, using the functoriality of the spiral induction $\Ind_{\sigma}$ for $\ubar\sigma\in \ubar\Xi_J$, we have a ring homomorphism
\begin{equation*}\begin{aligned}
	\psi_J: \prod_{\ubar\sigma\in \ubar\Xi_J}\ha\calH_{\sigma}\to \ha\calH.
\end{aligned}\end{equation*}\index{ps@$\psi_J$}
The transitivity of spiral induction with parabolic induction~\eqref{equa:transindsp} implies that $\psi_K = \psi_J\circ \left(\prod_{\ubar\sigma\in \ubar\Xi_J}\psi^{\sigma}_K\right)$ for $K\subset J$. 

\begin{theo}\label{theo:Phi}
	There is a unique ring homomorphism
	\begin{equation*}\begin{aligned}
		\Phi: \bfH_{\xi}\to \ha\calH
	\end{aligned}\end{equation*}\index{Ph@$\Phi$}
	such that for each $J\subsetneq \Delta_{\xi}$, the following diagram commutes
	\[
		\begin{tikzcd}[column sep = 50pt]
			\bfH_{\xi,J}\arrow{r}{\Phi_J}\arrow[hookrightarrow]{d} & \prod_{\ubar\sigma\in\ubar\Xi_J}\ha\calH_{\sigma} \arrow{d}{\psi_J} \\
			\bfH_{\xi}\arrow{r}{\Phi}  & \ha\calH
		\end{tikzcd}.
	\]
\end{theo}
\begin{proof}
	Taking inductive limit over the partially ordered set $\left( \left\{ J\subsetneq \Delta_{\xi} \right\}, \subseteq \right)$, we obtain by~\autoref{lemm:PhiJPhiK} a ring homomorphism
	\[
		\varinjlim_{J}\bfH_{\xi, J}\xrightarrow{(\psi_J\circ \Phi_J)_{J}}\ha\calH.
	\]
	It suffices to show that the natural ring homomorphism induced by inclusion of parabolic subalgebras
	\[
		\iota: \varinjlim_{J\subsetneq \Delta_{\xi}}\bfH_{\xi, J}\to \bfH_{\xi}
	\]
	is a ring isomorphism. The map $\iota$ is surjective because each generator of $\bfH_{\xi}$ lies in a parabolic subalgebra. Observe the following: 
	\begin{enumerate}
		\item
			Each of the defining relations of $\bfH_{\xi}$ involves at most two elements from the set of simple reflections $\left\{ s_{a} \right\}_{a\in \Delta_{\xi}}$.
		\item
			In the case where $\#\Delta_{\xi} = 2$, there is no braid relation between the two simple reflections.
	\end{enumerate}
	Each defining relation of $\bfH_{\xi}$ must therefore lie in the parabolic subalgebra $\bfH_{\xi, J}$ for some $J\subsetneq \Delta_{\xi}$. It follows that $\iota$ is also injective.
\end{proof}
So far, we have very little information about the maps $\psi_J$ and $\Phi$. We will show in~\autoref{prop:XZpara} that $\psi_J$ is injective and we will determine its image.

\section{Geometry of the Steinberg varieties}\label{sec:geomZ}
We have explained in~\autoref{subsec:fixedcomp} that $\dot\frakg^{\lambda_{\diamond,q}}_{\aff}\cong \bigsqcup_{\ubar\nu\in \ubar \Xi}\calT^{\nu}$ is an infinite disjoint union of smooth algebraic varieties. In this section, we study the fixed points $\ddot\frakg^{\lambda_{\diamond,q}}_{\aff}$ of the Steinberg variety $\ddot\frakg_{\aff} = \dot\frakg_{\aff}\times_{\frakg_{\aff}}\dot\frakg_{\aff}$. We describe the stratification of $\ddot\frakg_{\aff}^{\lambda_{\diamond,q}}$ by the relative position of pairs of spirals. The exposition below will be given purely in terms of finite dimensional varieties and the ind-schemes such as $\dot\frakg_{\aff}$ and $\ddot \frakg_{\aff}$ will only serve heuristically. \par

The main result~\autoref{prop:strZ} states that the filtration by Bruhat order on the Steinberg varieties is a filtration by closed subvarieties and that the convolution product on the Steinberg variety respects the filtration by Bruhat order. This will allow us to define a filtration on the convolution algebra in the next section~\autoref{sec:convolution}. \par

We keep the assumptions of the previous sections. 
\subsection{Double quotient} \label{subsec:relpos}
Recall that $I = I_\xi$ is the type of $\bbE^M$-chambers. We recollect some basic combinatorial properties about the double quotient $\calW_I\backslash \calW / \calW_I$. 
\begin{prop}\label{prop:doublequo}
	Recall that $W_{\xi}$ acts on $\Xi$ and $W$ acts on $\frakF_I$.
	\begin{enumerate}[label=(\roman*)]
		\item\label{prop:doublequo-i}
			There is a canonical inclusion $\calW_{\xi}\hookrightarrow \calW_{I}\backslash \calW /\calW_{I}$. 
		\item\label{prop:doublequo-ii}
			There is a canonical bijection
			\begin{equation*}\begin{aligned}
				W\backslash \left(\frakF_{I} \times \frakF_{I}\right)&\cong \calW_{I}\backslash \calW / \calW_{I} \\
			\end{aligned}\end{equation*}
			which induces a bijection
			\begin{equation*}\begin{aligned}
				W_{\xi}\backslash \left(\Xi  \times \Xi\right)&\cong \calW_{\xi}.\\
			\end{aligned}\end{equation*}
	\end{enumerate}
\end{prop}
\begin{proof}
	The inclusion in assertion \ref{prop:doublequo-i} is from~\autoref{prop:relW}, given by the shortest representatives. Choosing any $\kappa\in \frakA$, we define
	\begin{equation}\begin{aligned}\label{equa:relpos}
		\calW_{I}\backslash \calW / \calW_{I}&\to W\backslash \left(\frakF_{I} \times \frakF_{I}\right) \\
		\calW_{I}w\calW_{I} &\mapsto W\left(\partial_{I}\kappa , w^{\kappa}\partial_{I}\kappa\right).
	\end{aligned}\end{equation}
	Since $\frakF_I\cong \frakA / \calW_I$ as $W$-set, the above map is a bijection and does not depend on the choice of $\kappa$. The second statement follows, since both $W_{\xi}$ and $\calW_{\xi}$ act simply transitively on $\Xi$.
\end{proof}

\begin{defi}
	The image of any pair $(\nu, \nu')\in \frakF_{I}\times \frakF_{I}$ under the map
	\begin{equation*}\begin{aligned}
		\frakF_{I}\times \frakF_{I} \to W\backslash\left(\frakF_{I}\times \frakF_{I} \right) \cong \calW_{I} \backslash \calW / \calW_{I}
	\end{aligned}\end{equation*}
	is called the {\bf relative position} of $\nu$ and $\nu'$. The relative position is called {\bf good} if it is in the image of the inclusion $\calW_{\xi}\hookrightarrow \calW_{I} \backslash \calW / \calW_{I}$ and {\bf bad} otherwise. 
\end{defi}

We introduce a partial order on the double quotient $\calW_I \backslash \calW / \calW_I$.  For $w\in \calW$ we denote by $[w] = \calW_{I}w\calW_{I} \in \calW_{I}\backslash \calW / \calW_{I}$\index{w@$[w]$} the double coset containing $w$. 
\begin{prop}\label{prop:ordre}
	The following statements hold:
	\begin{enumerate}
		\item
			Each double coset $[w]\in \calW_I\backslash \calW / \calW_I$ has a maximal (resp. minimal) representative in $\calW$ with respect to the Bruhat order, denoted by $\max([w])\in \calW$ (resp. $\min([w])$). The map $w\mapsto \max([w])$ (resp. $w\mapsto \min([w])$ is order-preserving.
		\item
			Define partial orders $\le_{\max}$ and $\le_{\min}$ on $\calW_I\backslash \calW / \calW_I$ by 
			\begin{equation*}\begin{aligned}
				[w] \le_{\max} [w'] \Leftrightarrow \max([w]) \le \max([w']) \\
				[w] \le_{\min} [w'] \Leftrightarrow \min([w]) \le \min([w']). \\
			\end{aligned}\end{equation*}
			Then $\le_{\max}$ and $\le_{\min}$ coincide.
		\item
			The restriction of $\le_{\max}$ (resp. $\le_{\min}$) on $\calW_{\xi}\subset \calW_{I}\backslash \calW / \calW_{I}$ refines the Bruhat order on $\calW_{\xi}$. 
	\end{enumerate}
\end{prop}
\begin{proof}
	For $w\in \calW$, let $p(w)\in \calW$ be the maximal element in $\calW_I w$ and let $q(w)\in \calW$ be the maximal element in $w\calW_I$. It is clear that $p:\calW\to \calW$ and $q:\calW\to \calW$ preserve the order. \par
	Fix $[w] \in \calW_I\backslash \calW / \calW_I$. Replacing $w$ with $q(p(w))$, we may suppose that $p(w) = w = q(w)$. Given any $w'\in [w]$, pick $v\in \calW_Iw' \cap w\calW_I \neq \emptyset$. It follows that $v \le w$ so $w'\le p(w') = p(v) \le p(w) = w$. Hence $w$ is maximal in $[w]$. Moreover, we see that $\max([w]) = q(p(w))$. Similarly, the minimal element exists in $[w]$.  \par
	Since $p$ and $q$ preserve the Bruhat order, so does the map $w\mapsto \max([w])$. Similarly, the map $w\mapsto \min([w])$ also preserves the Bruhat order. Hence $\le_{\min}$ and $\le_{\max}$ coincides.  \par
	The last statement follows from~\autoref{prop:relW}.
\end{proof}

We say that a subset $C$ of a partially ordered set $(S, \le)$ is an {\itshape ideal} if $x\in C$ and $y\le x$ implies $y\in C$. The combinatorial results below follow from~\autoref{prop:ordre} and are easy to prove.

\begin{coro}\label{lemm:CC}
	The following statements hold
	\begin{enumerate}
		\item
			If $\calI\subset \calW$ satisfies $\calI = \calW_I \calI \calW_I$, then $\calI$ is an ideal of $(\calW, \le)$ if and only if $\calW_I \backslash \calI / \calW_I$ is an ideal of $\left(\calW_I\backslash \calW / \calW_I, \le_{\max}\right)$.
		\item
			If $\calI$ and $\calJ$ are ideals of $(\calW,\le)$, then so is $\calI\calJ$.
		\item
			If $\calI$ and $\calJ$ are ideals of $\left(\calW_I\backslash \calW / \calW_I, \le_{\max}\right)$, then so is the product 
			\[
				\calI\calJ = \left\{ [ww']\in \calW_I\backslash\calW / \calW_I\;;\; [w]\in \calI\,,\; [w']\in \calJ \right\}.
			\]
	\end{enumerate}\hfill\qed
\end{coro}

\subsection{The fixed-point components \texorpdfstring{$\calX^{\nu,\nu'}$}{X}}\label{subsec:X}
 Consider the space $\calP$ of spirals which are $\Gz$-conjugate to $\frakp^{\nu}_*$ for some $\nu\in \Xi$. This space is isomorphic to an infinite disjoint union of partial flag varieties of $G_{\ubar 0}$:
\begin{equation*}\begin{aligned}
	\calP \cong \bigsqcup_{\ubar\nu\in \ubar\Xi} \calP^{\nu},\quad \calP^{\nu} = G_{\ubar 0} / P^{\nu}_0&\hookrightarrow \calP
\end{aligned}\end{equation*}\index{P@$\calP^{\nu}$} \par
given by $G_{\ubar 0} / P^{\nu}_0\ni gP^{\nu}_0\mapsto \Ad_g \frakp^{\nu}_*\in \calP$. \par
	For $\nu, \nu'\in \Xi$, we set
\begin{equation*}\begin{aligned}
	\calX^{\nu, \nu'} = \calP^{\nu} \times  \calP^{\nu'}.
\end{aligned}\end{equation*} \index{X@$\calX^{\nu, \nu'}$}
Recall that by~\autoref{coro:WGz}, the Weyl group $W(\Gz,T)$ is isomorphic to the stabiliser $W_{\bfx} = \Stab_W(\bfx)$. Let $\nu, \nu'\in \Xi$. By the Bruhat decomposition, the orbits of the $G_{\ubar 0}$-action on $\calX^{\nu, \nu'}$ are in canonical bijection with the orbits of diagonal left translation of $W_{\bfx}$ on $\left(W_{\bfx} / W_{\bfx, \nu}\right) \times \left(W_{\bfx} / W_{\bfx,\nu'}\right)$, where $W_{\bfx, \nu} = W_{\bfx}\cap W_{\nu}$. \par

Define the map of {\itshape relative position} $\Pi_{\nu, \nu'} : \calX^{\nu, \nu'}\to \calW_{I}\backslash \calW/\calW_{I}$ by \index{P@$\Pi_{\nu, \nu'}$}
	\begin{equation*}\begin{aligned}
		G_{\ubar 0}\backslash\calX^{\nu, \nu'}\cong W_{\bfx}\backslash \left(\left(W_{\bfx} / W_{\bfx, \nu}\right) \times \left(W_{\bfx} / W_{\bfx, \nu'}\right)\right) &\rightarrow W\backslash \left(\frakF_{I} \times \frakF_{I}\right) \cong \calW_{I}\backslash \calW /\calW_{I} \\
		W_{\bfx}\cdot(u W_{\bfx, \nu}, v W_{\bfx, \nu'}) &\mapsto W\cdot (u \nu, v\nu'),
	\end{aligned}\end{equation*}
	the last isomorphism given by~\autorefitem{prop:doublequo}{ii}. We define for each $w\in \calW$ and each $\nu, \nu'\in \Xi$ the locally closed subvariety
\begin{equation*}\begin{aligned}
	\calX^{\nu, \nu'}_{w} = \Pi^{-1}_{\nu,\nu'}([w])\subset \calX^{\nu,\nu'},
\end{aligned}\end{equation*} \index{X@$\calX^{\nu, \nu'}_w$}
equipped with the reduced subscheme structure. Obviously, $\calX^{\nu, \nu'}_w = \calX^{\nu, \nu'}_{w'}$ if $[w] = [w']$. Similarly, for each ideal $\calI$ of $(\calW_I\backslash \calW / \calW_I, \le_{\max})$, we put 
\[
	\calX^{\nu,\nu'}_{\calI} = \Pi_{\nu,\nu'}^{-1}(\calI) = \bigcup_{[y]\in \calI}\calX^{\nu,\nu'}_y.
\]\index{X@$\calX^{\nu,\nu'}_{\calI}$}
\par

The following statements are analogues of standard results about $(B,N)$-pairs and convolution on flag manifolds. The arguments are similar to the classical ones using the Demazure desingularisation.
\begin{lemm}\label{lemm:XI}
	The following statements hold:
	\begin{enumerate}
		\item\label{lemm:XI-i}
			If $\calI$ is an ideal of $(\calW_I\backslash \calW / \calW_I, \le_{\max})$, then $\calX^{\nu,\nu'}_{\le \calI}$ is closed in $\calX^{\nu,\nu'}$.
		\item\label{lemm:XI-ii}
			If $\calI$ and $\calJ$ are ideals of $\calW_I\backslash \calW / \calW_I$, then the image of the projection
			\[
				\calX^{\nu,\nu'}_{\calI}\times_{\calP^{\nu'}}\calX^{\nu',\nu''}_{\calJ}\to \calX^{\nu',\nu''},\quad  (g P^{\nu}_0, g' P^{\nu'}_0, g'' P^{\nu''}_0)\mapsto (gP^{\nu}_0, g''P^{\nu''}_0).
			\]
			is contained in $\calX^{\nu,\nu''}_{\calI\calJ}$, where $\calI\calJ\subset \calW_I\backslash \calW / \calW_I$ is the product of $\calI$ and $\calJ$.
	\end{enumerate}
\end{lemm}
\begin{proof}
	Consider first the scheme $\calB$ which parametrises minimal spirals. For each minimal spiral $\frakp_*$, we can find $g\in G_{\ubar 0}$ such that $T\subset g P_0g^{-1}$. Since by~\autoref{coro:facet-spiral}, the minimal $T$-stable spirals are parametrised by $\frakA$, it follows that $\calB$ is isomorphic to an infinite disjoint union of flag varieties of $\Gz$:
	\begin{equation*}\begin{aligned}
		\calB \cong \bigsqcup_{\ubar\kappa \in \ubar \frakA} \calB^{\kappa},\quad  \calB^{\kappa} = G_{\ubar 0} / P^{\kappa}_0,\quad \ubar\frakA = W_{\bfx}\backslash\frakA,
	\end{aligned}\end{equation*}
	given by $\calB^{\kappa}\ni gP^{\kappa}_0 \mapsto \Ad_g \frakp^{\kappa}_*\in \calB$. Define $\calY^{\kappa,\kappa'} = \calB^{\kappa}\times \calB^{\kappa'}$. We can define as above a map of relative position $\Pi_{\kappa,\kappa'}: \calY^{\kappa,\kappa'}\to \calW$. \par
	\begin{enumerate}
		\item[\ul{Step 1}.]
			Let $\kappa\in \frakA$ be an alcove and $w,s\in \calW$ with $\ell(s) = 1$. Denote $\kappa' = w^{-1}\kappa$ and $\kappa'' = s^{-1} \kappa'$. We consider the image of the map 
			\begin{equation}\begin{aligned}\label{equa:convY}
				\calY^{\kappa, \kappa'}_{w}\times_{\calB^{\kappa'}} \calY^{\kappa', \kappa''}_s \to \calY^{\kappa,\kappa''}.
			\end{aligned}\end{equation}
			\begin{enumerate}
				\item[Case 1.] 
					Suppose that $\ell(ws) = \ell(w) + 1$. Then we have $P^{\kappa'}_0 = \left( P^{\kappa}_0\cap P^{\kappa'}_0\right)\left(P^{\kappa'}_0\cap P^{\kappa}_0  \right)$ and hence 
					\[
						\calY^{\kappa, \kappa'}_w\times_{\calB^{\kappa'}}\calY^{\kappa', \kappa''}_s \cong \calY^{\kappa, \kappa''}_{ws}. 
					\]
				\item[Case 2.] 
					Suppose that $\ell(ws) = \ell(w) - 1$. By the previous case, we have $\calY^{\kappa,\kappa''}_{ws}\times_{\calB^{\kappa''}}\calY^{\kappa'',\kappa'}_{s}\cong \calY^{\kappa, \kappa'}_{w}$. Hence
					\begin{equation*}\begin{aligned}
						\calY^{\kappa,\kappa'}_w\times_{\calB^{\kappa'}}\calY^{\kappa',\kappa''}_s \cong \calY^{\kappa,\kappa''}_{ws}\times_{\calB^{\kappa''}}\calY^{\kappa'',\kappa'}_s\times_{\calB^{\kappa'}}\calY^{\kappa',\kappa''}_s
					\end{aligned}\end{equation*}
					Using the fact that image of $\calY^{\kappa'',\kappa'}_s\times_{\calB^{\kappa'}}\calY^{\kappa',\kappa''}_s \to \calY^{\kappa'',\kappa''}$ lies in $\calY^{\kappa'',\kappa''}_e\cup\calY^{\kappa'',\kappa''}_s$, we deduce 
					\begin{equation*}\begin{aligned}
						\calY^{\kappa,\kappa''}_{ws}\times_{\calB^{\kappa''}}\calY^{\kappa'',\kappa'}_s\times_{\calB^{\kappa'}}\calY^{\kappa',\kappa''}_s\to \calY^{\kappa,\kappa''}_{ws}\times_{\calB^{\kappa''}}\left(\calY^{\kappa'',\kappa''}_e\cup \calY^{\kappa'',\kappa''}_s\right)\cong \calY^{\kappa,\kappa''}_{ws}\cup \calY^{\kappa,\kappa''}_{w}.
					\end{aligned}\end{equation*}
			\end{enumerate}
			Thus, the image of~\eqref{equa:convY} lies in $\calY^{\kappa,\kappa''}_{ws}\cup \calY^{\kappa,\kappa''}_w$ in both cases.
	\end{enumerate}
	Now we show that if $\calI\subset \calW$ is an ideal and $\kappa, \kappa'\in \frakA$ are alcoves, then $\calY^{\kappa,\kappa'}_{\calI} = \Pi_{\kappa,\kappa'}^{-1}(\calI)$ is a proper algebraic variety. Let $y\in \calI$ be such that $\ubar \kappa' = y^{-1}\ubar \kappa$ and pick a reduced decomposition $y = s_1\cdots s_r$. For each $i$, put $\kappa_i = s_{i} \cdots s_1 \kappa\in \frakA$ and consider $D_i = \calY^{\kappa_{i-1},\kappa_{i}}_{e}\cup \calY^{\kappa_{i-1},\kappa_{i}}_{s_i}$.
	\begin{enumerate}
		\item[\ul{Step 2}.]
			We show that $D_i$ is a proper variety for each $i$. 
			\begin{enumerate}
				\item[Case 1.]
					If $\ubar \kappa_{i-1} \neq \ubar \kappa_i$, then $\calY_e^{\kappa_{i-1},\kappa_{i}} = \emptyset$ and 
					\[
						\calY^{\kappa_{i-1},\kappa_{i}}_{s_i} = \Gz\cdot \left( P^{\kappa_{i-1}}_0, P^{\kappa_{i}}_0 \right) \subset \calY^{\kappa_{i-1},\kappa_{i}}.
					\]
					However, since $\kappa_{i-1}$ and $\kappa_{i}$ are adjacent in $\bbA$, the condition $\ubar\kappa_{i-1} \neq \ubar\kappa_{i}$ implies that the hyperplane separating $\kappa_{i-1}$ and $\kappa_{i}$ does not contain $\bfx\in \bbA$. Since by~\autoref{coro:WGz}, the root system $R(\Gz, T)$ can be identified with the set of affine roots which vanish at $\bfx$, we see that $P^{\kappa_{i-1}}_0 = P^{\kappa_i}_0$ as subgroup of $G_{\ubar 0}$. Therefore, we have $\Stab_{G_{\ubar 0}}\left( P^{\kappa_{i-1}}_0, P^{\kappa_{i}}_0 \right) = P^{\kappa_{i-1}}_0$. It follows that $D_i = \calY^{\kappa_{i-1}, \kappa_{i}}_{s_i}\cong G_{\ubar 0} / P^{\kappa_{i-1}}_0$ is a proper variety. 
				\item[Case 2.]
					If $\ubar \kappa_{i-1} = \ubar \kappa_i$, then $s_{i}^{\kappa_{i-1}}$ lies in $W_{\bfx}$, so there is an isomorphism $\calY^{\kappa_{i-1}, \kappa_i}\cong \calY^{\kappa_{i}, \kappa_i}$. Let $\dot s^{\kappa_i}_i\in N_{\Gz}(T)$ be a lifting of $s_i^{\kappa_{i-1}}$. Then $D_i$ can be described by
					\begin{equation*}\begin{aligned}
						D_i \cong \calY^{\kappa_i, \kappa_i}_{e}\cup \calY^{\kappa_i, \kappa_i}_{s_i} = \Gz\cdot \left( P^{\kappa_i}_0, P^{\kappa_i}_0 \right)\cup \Gz\cdot \left( P^{\kappa_i}_0, \dot s^{\kappa_i}_i P^{\kappa_i}_0 \right).
					\end{aligned}\end{equation*}
					We see that the projection $D_i \xrightarrow{\pr_1} \calB^{\kappa_i}$ is a $\bfP^1$-bundle. Hence $D_i$ is proper. 
			\end{enumerate}
		\item[\ul{Step 3}.]
			Now consider the convolution product 
			\begin{equation*}\begin{aligned}
				D_1\times_{\calB^{\kappa_1}}\cdots \times_{\calB^{\kappa_{r-1}}}D_r &\to \calY^{\kappa_0,\kappa_r} \\
				\left( g_1P^{\kappa_0}_0,  g_2P^{\kappa_1}_0, \ldots, g_{r+1}P^{\kappa_r}_0 \right) &\mapsto \left( g_1P^{\kappa_0}_0, g_{r+1}P^{\kappa_r}_0 \right)
			\end{aligned}\end{equation*}
			We show by induction on $i$ that the image of $D_1\times_{\calB^{\kappa_1}}\cdots\times_{\calB^{\kappa_{i-1}}}D_i\to \calY^{\kappa_0,\kappa_{i}}$ contains $\calY^{\kappa_0,\kappa_{i}}_{s_1\cdots s_i}$ and lies in $\calY^{\kappa_0,\kappa_{i}}_{\le s_1\cdots s_i}$. It is trivial for $i=0$. Suppose $i > 0$ and that the statement is proven for $i-1$, so that we have $D_1\times_{\calB^{\kappa_1}}\cdots \times_{\calB^{\kappa_{i-2}}}D_{i-1}\to \calY^{\kappa_0,\kappa_{i-1}}_{\le s_1 \cdots s_{i-1}}$. Applying Step 1, we have 
			\[
				\calY^{\kappa_0,\kappa_{i-1}}_{\le s_1 \cdots s_{i-1}}\times_{\calB^{\kappa_{i-1}}}D_i = \bigcup_{w'\le s_1 \cdots s_{i-1}}\calY^{\kappa_0,\kappa_{i-1}}_{w'}\times_{\calB^{\kappa_{i-1}}}D_i\to \bigcup_{w'\le s_1 \cdots s_{i-1}}(\calY^{\kappa_0,\kappa_{i}}_{w'}\cup \calY^{\kappa_0,\kappa_{i}}_{w's_i}).
			\]
			Since $w'\le s_1 \cdots s_{i-1}$ implies $w' \le s_1 \cdots s_i$ and $w's_i \le s_1 \cdots s_i$, the image of the composition
			\[
				D_1\times_{\calB^{\kappa_1}}\cdots \times_{\calB^{\kappa_{i-1}}}D_i \to \calY^{\kappa_0,\kappa_{i-1}}_{\le s_1 \cdots s_{i-1}}\times_{\calB^{\kappa_{i-1}}}D_i\to \calY^{\kappa_0,\kappa_i}
			\]
			lies in $\calY^{\kappa_0,\kappa_i}_{\le s_1 \cdots s_i}$. The case 1 of Step 1 also implies that $\calY^{\kappa_0, \kappa_i}_{s_1 \cdots s_i}$ lies in the image.
		\item[\ul{Step 4}.]
			Since $D_1\times_{\calB^{\kappa_1}}\cdots \times_{\calB^{\kappa_{r-1}}}D_r$ is a proper variety, its image in $\calY^{\kappa_0, \kappa_r}_{\le s_1\cdots s_r}$ is also proper. We see that, $\calY^{\kappa_0,\kappa_r}_y$ is contained in a $G_{\ubar 0}$-invariant proper subvariety of $\calY^{\kappa_0,\kappa_r}_{\le y}$. Letting $y$ run over all elements of $\calI$ with $y\ubar\kappa' = \ubar \kappa$, we see that $\calY^{\kappa,\kappa'}_{\calI}$ is covered by a finite number of proper subvarieties of $\calY^{\kappa,\kappa'}$. Hence $\calY^{\kappa,\kappa'}_{\calI}$ is proper.  \par
		\item[\ul{Step 5}.]
			Applying Step 1, standard arguments of $(B, N)$-pairs show that the image of 
			\[
				\calY^{\kappa,\kappa'}_{\calI}\times_{\calB^{\kappa'}}\times \calY^{\kappa',\kappa''}_{\calJ}\to \calY^{\kappa,\kappa''}
			\]
			is contained in $\calY^{\kappa,\kappa'}_{\calI\calJ}$. 
		\item[\ul{Step 6}.]
			We prove the statement~\ref{lemm:XI-i}. For $\nu,\nu'\in \Xi$, pick alcoves $\kappa, \kappa'\in \frakA$ such that $\partial_I \kappa = \nu$ and $\partial_I \kappa' = \nu'$. The projection $f:\calY^{\kappa,\kappa'}\to \calX^{\nu,\nu'}$ satisfies the property that $f^{-1}(\calX^{\nu,\nu'}_w) = \bigcup_{y\in [w]}\calY^{\kappa,\kappa'}_y$ for each $[w]\in \calW_I\backslash\calW / \calW_I$. Since $\calI$ is an ideal of $(\calW_I\backslash \calW / \calW_I, \le_{\max})$, by~\autoref{lemm:CC}, the set $\til\calI = \left\{ w\in \calW\;;\; [w]\in \calI\right\}$ is an ideal of $\calW$. Thus  $f^{-1}(\calX^{\nu,\nu'}_{\calI}) = \calY^{\kappa,\kappa'}_{\til\calI}$ is proper by Step 4 and so is the image $\calX^{\nu,\nu'}_{\calI} = f(\calY^{\kappa,\kappa'}_{\til\calI})$ proper. Hence $\calX^{\nu,\nu'}_{\calI}$ is closed in $\calX^{\nu,\nu'}$. \par
			Similarly, one can prove~\ref{lemm:XI-ii} by descending the corresponding statement about $\calY$ in Step 5. 
	\end{enumerate}

\end{proof}

\subsection{Good strata of \texorpdfstring{$\calX^{\nu, \nu'}$}{Xνν'}}
For general $[w]\in \calW_I\backslash\calW / \calW_I$, the stratum $\calX^{\nu,\nu'}_w = \Pi^{-1}_{\nu,\nu'}[w]$ may not be a single $G_{\ubar 0}$-orbit. However, the following lemma shows that it is the case for those strata $\calX^{\nu,\nu'}_w$ with $w\in \calW_{\xi}$ and $w\ubar\nu' = \ubar \nu$. As we will see in~\autoref{sec:convolution}, only these strata will have contribution to the extension algebra $\ha\calH$. 

\begin{lemm}\label{lemm:uniqueorbit}
	Let $[w]\in \calW_{I}\backslash \calW / \calW_{I}$ and $\nu,\nu'\in \Xi$.
	\begin{enumerate}[label=(\roman*)]
		\item\label{lemm:uniqueorbit-i}
			The stratum $\calX^{\nu,\nu'}_w$ contains a finite number of $G_{\ubar 0}$-orbits. 
		\item\label{lemm:uniqueorbit-ii}
			If $w\in \calW_{\xi}$, then we have $\#\left(G_{\ubar 0}\backslash \Pi_{\nu, \nu'}^{-1}\left[ w \right] \right)\le 1 $. Moreover, the equality holds if and only if $\ubar{\nu} = w\ubar{\nu'}$.
	\end{enumerate}
\end{lemm}
\begin{proof}
	The assertion \ref{lemm:uniqueorbit-i} follows from the fact that $G_{\ubar 0}\backslash \calX^{\nu,\nu'}$ is finite. \par
	We prove~\ref{lemm:uniqueorbit-ii}. Suppose that $w\in \calW_{\xi}$ and $\nu,\nu'\in \Xi$ such that $\ubar\nu = w\ubar\nu'$. We may assume that $\nu =  w\nu'$ since $\calX^{\nu, \nu'}$ and $\Pi_{\nu, \nu'}$ depends only on the $W_{\xi,\bfx}$-orbits $\ubar\nu$ and $\ubar\nu'$. Let $w^{\nu}\in W_{\xi}$ be the image of $w$ under the isomorphism $\left( \calW_{\xi}, \Delta_{\xi} \right)\cong\left( W_{\xi}, \Delta_{\xi}^{\nu} \right)$ so that $\nu' = w^{\nu}\nu$ \cfauto{subsec:relpos}. It suffices to show that the map 
	\begin{equation*}\begin{aligned}
		\varphi: W_{\bfx}\backslash \left(\left(W_{\bfx} / W_{\bfx, \nu}\right) \times \left(W_{\bfx} / W_{\bfx, \nu'}\right)\right) &\to W\backslash \left(\frakF_{I} \times \frakF_{I}\right)\\
		W_{\bfx}\cdot (u, u') &\mapsto W\cdot (u \nu, u'w^{\nu}\nu)
	\end{aligned}\end{equation*}
	is injective over $W\cdot\left(\nu, w^{\nu}\nu  \right)\in W\backslash \left(\frakF_I \times \frakF_I\right)$.
	Assume we have $u, u'\in W_{\bfx}$ such that $\left( uW_{\bfx, \nu}, u'W_{\bfx, \nu'} \right)$ is sent to $W\cdot\left(\nu, w^{\nu}\nu  \right)$, in other words
	\begin{equation*}\begin{aligned}
		W\cdot\left( u\nu, u'w^{\nu}\nu \right) =  W\cdot\left( \nu, w^{\nu} \nu \right).
	\end{aligned}\end{equation*}
	We shall prove that $W_{\bfx}\cdot\left( u\nu, u'w^{\nu}\nu \right)= W_{\bfx}\cdot\left( \nu, w^{\nu}\nu \right)$. \par

	Indeed, as
	\begin{equation*}\begin{aligned}
		W\cdot\left( \nu, w^{\nu} \nu \right)= W\cdot\left( u\nu, u'w^{\nu} \nu \right)= W\cdot\left( \nu, u^{-1}u'w^{\nu} \nu \right)
	\end{aligned}\end{equation*}
	there exists $q\in W_{\nu} = W_M$ such that $qw^{\nu}\nu = u^{-1}u'w^{\nu}\nu$, or equivalently 
	\begin{equation*}\begin{aligned}
		\left(w^{\nu}\right)^{-1}q^{-1}u^{-1}u'w^{\nu}\in W_{\nu}.
	\end{aligned}\end{equation*}
	Since $w^{\nu}$ normalises $W_{\nu}$, we obtain $(w^{\nu})^{-1}u^{-1}u'w^{\nu}\in W_{\nu}$ and hence 
	\begin{equation*}\begin{aligned}
		W_{\bfx}\cdot\left(u\nu, u' w^{\nu}\nu  \right)&= W_{\bfx}\cdot\left(\nu, u^{-1}u' w^{\nu}\nu  \right) = W_{\bfx}\cdot\left(\nu, w^{\nu}((w^{\nu})^{-1}u^{-1}u'w^{\nu})\nu  \right) = W_{\bfx}\cdot(\nu,w^{\nu}\nu),
	\end{aligned}\end{equation*}
	which concludes the proof.
\end{proof}

\begin{lemm} \label{prop:strX}
	For each $\nu, \nu'\in \Xi$ and each $w\in \calW_{\xi}$ such that $w\ubar{\nu'} = \ubar{\nu}$, there is an isomorphism of $\Gztq$-schemes
				\begin{equation*}\begin{aligned}
								\begin{tikzcd}[row sep=.5em]
									G_{\ubar 0} / P_0^{\nu}\cap P_0^{w^{-1}\nu} \arrow{r}{\cong} &  \calX_w^{\nu, w^{-1}\nu}\arrow{r}{\cong} & \calX_w^{\nu, \nu'}  \\
									g\cdot\left(P_0^{\nu}\cap P_0^{ w^{-1}\nu}\right) \arrow[mapsto]{r}& \left(gP_0^{\nu}, gP_0^{w^{-1}\nu}\right) & 
								\end{tikzcd}.
				\end{aligned}\end{equation*}
\end{lemm}
\begin{proof}
By~\autoref{lemm:uniqueorbit}, $\calX_w^{\nu, \nu'}$ is a $G_{\ubar 0}$-orbit. The proposition results from the fact that the $\Gz$-action on $\calX^{\nu,w^{-1}\nu}_w$ satisfies
	\[
		\Stab_{G_{\ubar 0}} \left( P_0^{\nu}, P_0^{w^{-1}\nu} \right) = P_0^{\nu}\cap P_0^{w^{-1}\nu }.\qedhere
	\]
\end{proof}
There is a canonical bijection 
\[
	P^{\nu}_0\backslash G_{\ubar 0}/P^{\nu'}_0\cong G_{\ubar 0}\backslash \calX^{\nu, \nu'},\quad P^{\nu}_0gP^{\nu'}_0\mapsto \Gz\cdot(P^{\nu}_0, gP^{\nu'}_0).
\]
The following technical lemma will be used in the proof of~\autoref{prop:SS-bimod} to show that bad orbits do not contribute to the extension algebra $\ha\calH$.
\begin{prop}\label{prop:gooddoublecosets}
	Let $\Omega\in P^{\nu}_0\backslash G_{\ubar 0}/P^{\nu'}_0$ be a double coset and let $O_{\Omega}\in G_{\ubar 0}\backslash \calX^{\nu, \nu'}$ be the corresponding orbit. Then $\Pi_{\nu, \nu'}\left(O_{\Omega}\right) \in \calW_{\xi}$ if and only if for any $g\in \Omega$, the following natural inclusion 
	\begin{equation*}\begin{aligned}
		\left(\frakp^{\nu}_{N}\cap \Ad_{g}\frakp^{\nu'}_N\right) / \left(\fraku^{\nu}_{N}\cap \Ad_{g}\fraku^{\nu'}_N\right) \to \Ad_{g}\frakp^{\nu'}_N / \Ad_{g}\fraku^{\nu'}_N
	\end{aligned}\end{equation*}
	is an isomorphism for each $N\in \bfZ$. 
\end{prop}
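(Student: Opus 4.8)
The statement is an "if and only if" linking a combinatorial condition (the relative position of $\nu,\nu'$ lands in $\til\calW^{\can}$, i.e. is \emph{good}) to a linear-algebraic condition (a certain inclusion of graded subquotients is an isomorphism in every degree $N$). By \autoref{lemm:posrel}, goodness of the relative position of $O_\Omega$ is equivalent to $\bbE^\nu = \Ad_g\bbE^{\nu'}$ as affine subspaces of $\bbA$ (for $g\in\Omega$; note $\Ad_g$ makes sense on the combinatorial side once we pick a lift and observe independence of the lift, as in \autoref{subsec:conjxi}). So the plan is to reduce both sides to the single geometric statement $\bbE^\nu=\Ad_g\bbE^{\nu'}$, and prove two separate implications.

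\emph{First reduction: rewrite the linear-algebra condition.} Write $\frakp = \frakp^\nu_*$, $\fraku = \fraku^\nu_*$, and $\frakp' = \Ad_g\frakp^{\nu'}_*$, $\fraku' = \Ad_g\fraku^{\nu'}_*$; these are a spiral and its nilradical, with splittings $\frakl = \frakl^\nu_*$ and $\frakl' = \Ad_g\frakl^{\nu'}_*$. The map in question is, in degree $N$, the natural map
\begin{equation*}
(\frakp_N\cap\frakp'_N)/(\fraku_N\cap\fraku'_N)\longrightarrow \frakp'_N/\fraku'_N \cong \frakl'_N.
\end{equation*}
Since $\fraku_N\cap\frakp'_N = \frakp'_N\cap\fraku_N \subseteq \fraku'_N$ is \emph{not} automatic, one must be careful; but the kernel of $\frakp_N\cap\frakp'_N\to\frakl'_N$ is exactly $\frakp_N\cap\fraku'_N$, and I claim $\fraku_N\cap\fraku'_N = \frakp_N\cap\fraku'_N$ is what one wants for injectivity, while surjectivity says $\frakp_N\cap\frakp'_N$ surjects onto $\frakl'_N$. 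I would phrase everything in terms of the defining filtrations: choosing cocharacters $\lambda,\lambda'$ (in $\bbX_*(T)_\bfQ$, resp. $\Ad_g$ of one for $T' = \Ad_g T$) with $\frakp = \pre{\epsilon}\frakp^\lambda$, etc., the spaces $\frakp_N,\fraku_N,\frakl_N$ are spanned by $\til\theta$-weight spaces that are also $\lambda$-weight spaces of weight $\ge\epsilon N$, $>\epsilon N$, $=\epsilon N$ respectively. The key combinatorial input, which I would extract from \cite{LYIII} (the discussion around 3.4.4–3.4.7 on spirals and facets), is that the grading on $\frakg_{\ubar N}$ by $(\lambda,\lambda')$-bi-weights is indexed, root-space by root-space, by the pair of affine functions $(a - \epsilon N)$-type evaluations coming from the facets $\nu$ and $\Ad_g\nu'$; goodness $\bbE^\nu = \Ad_g\bbE^{\nu'}$ forces these two gradings to be "parallel" in a way that makes each root space lie in $\frakl'_N$ iff it lies in $\frakl_N\cap\frakl'_N$.

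\emph{The two implications.} For ($\Leftarrow$): assume the inclusion is an isomorphism for all $N$. Taking $N$ ranging over $\bfZ$ and summing, $\frakp\cap\frakp'$ surjects onto $\frakl'$ with kernel $\fraku\cap\fraku'$; comparing dimensions of the graded pieces (all finite over each $\til\theta$-degree since $\frakg_{\ubar d}$ is) and using $\frakp'=\frakl'\oplus\fraku'$, one deduces $\frakp\supseteq\frakl'$, hence $\frakl'\subseteq\frakp$ is a graded Levi-type subalgebra sitting inside $\frakp$; then the pseudo-Levi $L' = \Ad_g L^{\nu'}$ lies in $P^\nu$, which forces (by the dictionary \autoref{subsec:subsp-pslevi} between relevant subspaces and graded pseudo-Levis, plus \autoref{subsec:spiralfacet}) $\Ad_g\bbE^{\nu'}\subseteq\bbE^\nu$; reversing roles of $\nu,\nu'$ (using that the same hypothesis read the other way, via $g^{-1}$, gives the reverse inclusion — here I would invoke that an isomorphism of the stated quotients is symmetric enough, or argue by dimension count that both inclusions are forced simultaneously) gives equality, i.e. good relative position by \autoref{lemm:posrel}. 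For ($\Rightarrow$): assume $\bbE^\nu = \Ad_g\bbE^{\nu'}$. Then $\frakl = \frakl^\nu_*$ and $\frakl' = \Ad_g\frakl^{\nu'}_*$ are two splittings of spirals spanning the \emph{same} relevant subspace, so by \autoref{subsec:subsp-pslevi} they are the \emph{same} graded pseudo-Levi $(M^\bbE, \frakm^\bbE_*)$; now $\frakp$ and $\frakp'$ are two spirals both having this common $\frakm^\bbE_*$ as a splitting, and for such a pair the standard parabolic-induction/transitivity formalism for spirals (again \cite{LYIII}, and compatible with the adjunction picture of \autoref{subsec:adjonction}) gives precisely that $\frakp_N\cap\frakp'_N \twoheadrightarrow \frakm^\bbE_N = \frakl'_N$ with kernel $\fraku_N\cap\fraku'_N$ — this is the spiral analogue of the classical fact that for two parabolics $P,Q$ with a common Levi $L$, $P\cap Q = L\ltimes(U_P\cap U_Q)$.

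\emph{Main obstacle.} The crux is the ($\Rightarrow$) direction's linear algebra: proving that two spirals sharing a common splitting $\frakm^\bbE_*$ satisfy $\frakp_N\cap\frakp'_N = \frakm^\bbE_N\oplus(\fraku_N\cap\fraku'_N)$ in each degree. Classically this uses that $P\cap Q$ contains $L$ and is "as large as possible", via the geometry of the building or an explicit Bruhat-type argument; in the graded/spiral setting I expect one must run the argument on $\til\theta$-weight spaces one at a time, reducing to the ungraded statement inside $\frakl^{\sigma}$ for $\sigma$ a common sub-facet (cf. \autoref{subsec:cusp}), and the bookkeeping of which root spaces fall where — controlled by the bi-grading by $(\lambda,\lambda')$ and the condition $\bbE^\nu = \Ad_g\bbE^{\nu'}$ — is where the real work lies. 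I would isolate this as a lemma about spirals with a common splitting and cite or prove it from the facet/spiral correspondence, then the proposition follows formally by combining it with \autoref{lemm:posrel} and the dimension count in the ($\Leftarrow$) direction.
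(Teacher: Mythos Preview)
Your overall strategy---reduce goodness of the relative position to an equality of relevant affine subspaces via \autoref{lemm:posrel}, then match that against the linear-algebraic condition---is the paper's strategy too, but there is a genuine gap in how you carry it out. The expression ``$\Ad_g\bbE^{\nu'}$'' on which both of your implications rest is not well-defined: a general $g\in\Omega\subset G_{\ubar 0}$ does not lie in $N_{G_{\ubar 0}}(T)$, so $\Ad_g$ does not act on $\bbA=\bfX_*(T)_\bfQ$ or on $\frakF$, and \autoref{subsec:conjxi} only treats liftings of elements of $W_{\bfx}$. In particular the spiral $\Ad_g\frakp^{\nu'}_*$ typically does not lie in $\frakP_T$, so it has no attached facet and no attached subspace in $\frakE$. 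Your $(\Leftarrow)$ argument has a second gap: surjectivity of $(\frakp_N\cap\frakp'_N)\to\frakl'_N$ does \emph{not} yield $\frakl'_N\subseteq\frakp_N$. It only says each $z\in\frakl'_N$ differs from some element of $\frakp_N\cap\frakp'_N$ by something in $\fraku'_N$, and that correction term need not lie in $\frakp_N$; a dimension count does not rescue this (one can write down $3$-dimensional linear-algebra counterexamples), so the chain ``$\frakl'\subset\frakp\Rightarrow L'\subset P^\nu\Rightarrow\Ad_g\bbE^{\nu'}\subseteq\bbE^\nu$'' breaks at its first link.

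The paper supplies precisely the missing reduction. One invokes \cite[5.1]{LYI} to choose splittings of $\frakp^\nu_*$ and of $\Ad_g\frakp^{\nu'}_*$ whose degree-zero pieces contain a \emph{common} maximal torus $T'$ of $G_{\ubar 0}$, and then conjugates $T'$ to $T$ by some $g'\in G_{\ubar 0}$. After this, both $\Ad_{g'}\frakp^\nu_*$ and $\Ad_{g'g}\frakp^{\nu'}_*$ lie in $\frakP_T$, hence equal $\frakp^{\mu}_*$ and $\frakp^{\mu'}_*$ for genuine facets $\mu,\mu'\in\frakF_{I^{\can}}$, and one checks from the definition of $\Pi_{\nu,\nu'}$ that $\Pi_{\nu,\nu'}(O_\Omega)$ is exactly their relative position. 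Now \autoref{lemm:posrel} applies literally: goodness $\Leftrightarrow\bbE^{\mu}=\bbE^{\mu'}$. With both spirals $T$-defined, every space in sight is a direct sum of $T$-root spaces, so the equivalence of $\bbE^{\mu}=\bbE^{\mu'}$ (i.e.\ $\frakl^{\mu}_*=\frakl^{\mu'}_*$, by \autoref{subsec:subsp-pslevi}) with the quotient condition becomes a root-by-root inspection of the Levi decompositions; finally one undoes $\Ad_{g'}$ to recover the stated condition for the original $g$. Your ``two parabolics with a common Levi'' picture for $(\Rightarrow)$ is exactly the content of this $T$-defined step, but it only becomes available \emph{after} the common-torus reduction, which is the substantive input you are missing.
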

\begin{proof}
	Let $g\in \Omega$. According to~\cite[5.1]{LYI}, there is a splitting $\frakl_*$ of $\frakp^{\nu}_*$ and a splitting $\frakl'_*$ of $\Ad_g\frakp^{\nu'}_*$ such that $L^{\nu}_0 = \exp\left( \frakl_0 \right)$ and $L^{\nu'}_0 = \exp\left( \frakl'_0 \right)$ contain a common maximal torus $T'$ of $G_{\ubar 0}$. \par
	Let $g'\in G_{\ubar 0}$ be such that $g'T'g'^{-1} = T$. Choose any alcove $\kappa\in \frakA$ such that $\partial_I\kappa = \nu$. Then $\Ad_{g'}\frakp^{\nu}_*$ and $\Ad_{g'g}\frakp^{\nu'}_*$ are in $\frakP_T$ and there exist $y, w\in \calW$ such that $\Ad_{g'}\frakp^{\nu}_* = \frakp^{y^{\kappa}\nu}_*$ and $\Ad_{g'g}\frakp^{\nu'}_* = \frakp^{y^{\kappa}w^{\kappa}\nu}_*$. From the definition of $\Pi_{\nu, \nu'}$, we see that $\Pi_{\nu, \nu'}\left( O_{\Omega} \right) = [w]$. Moreover,  $[w]\in \calW_{\xi}$ if and only if $y^{\kappa}\nu$ and $y^{\kappa}w^{\kappa}\nu$ span the same affine subspace, which is equivalent to that $\frakl^{y^{\kappa}\nu}_* = \frakl^{y^{\kappa}w^{\kappa}\nu}_*$. Using the Levi decomposition $\frakp^{y^{\kappa}\nu}_* = \frakl^{y^{\kappa}\nu}_*\oplus\fraku^{y^{\kappa}\nu}_*$ and $\frakp^{y^{\kappa}w^{\kappa}\nu}_* =  \frakl^{y^{\kappa}w^{\kappa}\nu}_*\oplus\fraku^{y^{\kappa}w^{\kappa}\nu}_*$, we see that $\frakl^{y^{\kappa}\nu}_* = \frakl^{y^{\kappa}w^{\kappa}\nu}_*$ if and only if  the obvious map
	\begin{equation}\begin{aligned}\label{equa:condition1}
		\left(\frakp^{y^{\kappa}\nu}_{N}\cap \frakp^{y^{\kappa}w^{\kappa}\nu}_N\right) / \left(\fraku^{y^{\kappa}\nu}_{N}\cap \fraku^{y^{\kappa}w^{\kappa}\nu}_N\right) \to \frakp^{y^{\kappa}w^{\kappa}\nu}_N / \fraku^{y^{\kappa}w^{\kappa}\nu}_N
	\end{aligned}\end{equation}
	is an isomorphism for all $N\in \bfZ$. Taking into account that $\Ad_{g'}\frakp^{\nu}_* = \frakp^{y^{\kappa}\nu}_*$ and $\Ad_{g'g}\frakp^{\nu'}_* = \frakp^{y^{\kappa}w^{\kappa}\nu}_*$, the condition that the map~\eqref{equa:condition1} is an isomorphism is equivalent to that the map
	\begin{equation}\begin{aligned}\label{equa:condition2}
		\left(\Ad_{g'}\frakp^{\nu}_{N}\cap \Ad_{g'g}\frakp^{\nu'}_N\right) / \left(\Ad_{g'}\fraku^{\nu}_{N}\cap \Ad_{g'g}\fraku^{\nu'}_N\right) \to \Ad_{g'g}\frakp^{\nu'}_N / \Ad_{g'g}\fraku^{\nu'}_N
	\end{aligned}\end{equation}
	is an isomorphism for all $N\in \bfZ$. We obtain the desired equivalent condition by applying $\Ad_{g'^{-1}}$ to the map~\eqref{equa:condition2}.
\end{proof}

\begin{rema}\label{rema:goodbad}
	In the terminology of~\cite[5.2]{LYI}, if a double coset $\Omega\in P_0^{\nu}\backslash G_{\ubar 0} / P_0^{\nu'}$ satisfies the condition of~\autoref{prop:gooddoublecosets}, it is called {\bf good}. It is called {\bf bad} otherwise. 
\end{rema}

\subsection{Steinberg varieties}\label{subsec:steinberg}
For each $\nu\in \Xi$, we introduce a variety
\begin{equation*}\begin{aligned}
	\calT^{\nu} = G_{\ubar 0}\times^{P_0^{\nu}} \left( \rmO_{\eta}\oplus \fraku^{\nu}_{\eta} \right).
\end{aligned}\end{equation*}
It is a smooth variety equipped with an action of $\Gztq$, where the factor $\Gz$ acts by multiplication on the left, and is, up to $\Gztq$-equivariant isomorphism, independent of the choice of $\nu$ in its $W_{\bfx}$-conjugacy class. There is a stack morphism
\[
	f:[\calT^{\nu} / \Gztq] \cong [  \rmO_{\eta}\oplus \fraku^{\nu}_{\eta} / P^{\nu}_{0,\diamond,q} ] \to \left[ \rmO_{\eta} / \Mztq \right].
\]
Let $\scrC_{\eta} = \scrC\mid_{\rmO_{\eta}}$. Define $\dot\scrC_{\nu} = f^*\scrC_{\eta}$. It is a $\Gztq$-equivariant local system on $\calT^{\nu}$. If we put 
\[
	a^{\nu}:\calT^{\nu} = G_{\ubar 0}\times^{P_0^{\nu}} \left( \rmO_{\eta}\oplus \fraku^{\nu}_{\eta} \right)\to \frakg_{\ubar\eta}, \qquad a^{\nu}(g, z) = \Ad_g z,
\]
then we recover the induced complex $a^{\nu}_* \dot\scrC_{\nu} = \Ind_{\nu}\scrC = \bfI^{\nu}$ introduced in~\autoref{subsec:indcusp}.  \par
\index{T@$\calT^{\nu}$}\index{C@$\dot\scrC_{\nu}$}
For $\nu, \nu'\in \Xi$, we take the fibred product of $a^{\nu}$ and $a^{\nu'}$:
\begin{equation*}\begin{aligned}
	\calZ^{\nu, \nu'} = \calT^{\nu} \times_{\frakg_{\ubar \eta}} \calT^{\nu'}.
\end{aligned}\end{equation*} \index{Z@$\calZ^{\nu, \nu'}$} 
There is a canonical $\Gztq$-equivariant map which forgets the Lie algebra component
\begin{equation*}\begin{aligned}
	g:\calZ^{\nu, \nu'} \to \calX^{\nu, \nu'}.
\end{aligned}\end{equation*}
For $w\in \calW$, we set $\calZ^{\nu, \nu'}_w = g^{-1}(\calX^{\nu,\nu'}_w)$. \index{Z@$\calZ^{\nu, \nu'}_w$} Similarly, for any ideal $\calI$ of $\left( \calW_I\backslash \calW / \calW_I, \le_{\max} \right)$, we put $\calZ^{\nu,\nu'}_{\calI} = g^{-1}(\calX^{\nu,\nu'}_{\calI})$.\index{Z@$\calZ^{\nu, \nu'}_{\calI}$} The following lemma summarises the geometric properties of the Steinberg varieties $\calZ^{\nu,\nu'}$ that are necessary for our analysis of the extension algebra $\ha\calH$. 
\begin{prop} \label{prop:strZ}
	Let $\nu, \nu',\nu''\in \Xi$.
	\begin{enumerate}
		\item\label{prop:strZ-i}
			For every ideal $\calI\subset \calW_I\backslash \calW / \calW_I$, the subvariety $\calZ^{\nu,\nu'}_{\calI}\subset \calZ^{\nu,\nu'}$ is closed.
		\item\label{prop:strZ-ii}
			For every pair of ideals $\calI,\calJ\subset \calW_I\backslash \calW / \calW_I$, the image of the projection
			\begin{equation*}\begin{aligned}
				\calZ^{\nu,\nu'}_{\calI}\times_{\calT^{\nu'}}\calZ^{\nu',\nu''}_{\calJ}\to \calZ^{\nu,\nu''}
			\end{aligned}\end{equation*}
			lies in $\calZ^{\nu,\nu''}_{\calI\calJ}$. 
		\item\label{prop:strZ-iii}
			For every $w\in \calW_{\xi}$ such that $w\ubar{\ubar\nu'} = \ubar{\ubar\nu}$, there is an equivariant isomorphism of $\Gztq$-schemes
				\begin{equation*}\begin{aligned}
								\begin{tikzcd}[row sep=.5em]
									G_{\ubar 0} \times^{P_0^{\nu}\cap P_0^{w^{-1}\nu}}\left(\rmO_{\eta}\oplus \left(\fraku_{\eta}^{\nu}  \cap \fraku_{\eta}^{w^{-1}\nu}\right)\right) \arrow{r}{\cong} &  \calZ_w^{\nu, w^{-1}\nu}\arrow{r}{\cong} & \calZ_w^{\nu, \nu'}  \\
												(g, x) \arrow[mapsto]{r}& \left( \left( g , x\right), \left(g, x  \right) \right) & 
								\end{tikzcd}.
				\end{aligned}\end{equation*}
	\end{enumerate}
\end{prop}
\begin{proof}
	The first two statements follow from~\autoref{lemm:XI} and the last one follows from~\autoref{prop:strX}.
\end{proof}

\section{Convolution algebra}\label{sec:convolution}
We keep the assumptions of the previous sections. \par

In this section, we study the extension algebra $\ha\calH$ in terms of convolution algebras. We deduce some properties of convolution algebras from the geometry of Steinberg varieties $\calZ^{\nu,\nu'}$ studied in the previous section.

\subsection{Convolution product}\label{subsec:convolution}
Let $\nu, \nu'\in \Xi$. We have two projections $q_1:\calZ^{\nu, \nu'}\to \calT^{\nu}$ and $q_2:\calZ^{\nu, \nu'}\to \calT^{\nu'}$. We set
\begin{equation*}\begin{aligned}
				\calH^{\nu, \nu'} = \Ext^*_{\Gztq}\left( \bfI^{\nu'}, \bfI^{\nu} \right).
\end{aligned}\end{equation*}
\index{H@$\calH^{\nu, \nu'}$} We put $\scrK = \scrHom(q_2^*\dot\scrC_{\nu'}, q_1^!\dot\scrC_{\nu})$. The Verdier duality yields
\begin{equation*}\begin{aligned}
	\calH^{\nu,\nu'}\cong \Ext^*_{\Gztq}\left(q_2^* \dot\scrC_{\nu'}, q_1^! \dot\scrC_{\nu}  \right) \cong  \rmH^*_{\Gztq}(\calZ^{\nu,\nu'}, \scrK).
\end{aligned}\end{equation*}
\index{K@$\scrK$} For the reason explained in~\autoref{subsec:indcusp}, the definition of $\calH^{\nu, \nu'}$ is, up to canonical isomorphism, independent of $\nu$ and $\nu'$ in their respective $W_{\bfx}$-conjugacy classes. \par
By the formalism of~\cite{lusztig95} and~\cite{EM}, for $\nu, \nu', \nu''\in \Xi$, the Yoneda product
\begin{equation*}\begin{aligned}
				\calH^{\nu, \nu'}\otimes \calH^{\nu', \nu''} \to \calH^{\nu, \nu''}
\end{aligned}\end{equation*}
can be described in terms of $\calZ^{\nu,\nu'}$ and $\scrK$. It is called the {\itshape convolution product}. One subtlety of it is that the maps $q_1$ and $q_2$ are not proper and {\itshape a priori} cohomological convolution product cannot be defined for non-proper maps. \par

We recall the construction of this convolution product. Since the orbit $\rmO$ is distinguished, $\rmO_{\eta}$ must be open and dense in $\frakm_{\eta}$ {\it cf.}~\cite[4.4a]{lusztig95b}. We set $\ha\calT^{\nu} = G_{\ubar 0}\times^{P_0^{\nu}}\frakp^{\nu}_{\eta}$ and $\ha\calZ^{\nu, \nu'} = \ha\calT^{\nu}\times_{\frakg_{\ubar \eta}} \ha\calT^{\nu}$. Let $u^{\nu}:\calT^{\nu}\hookrightarrow\ha\calT^{\nu}$ and $u^{\nu,\nu'}:\calZ^{\nu,\nu'}\hookrightarrow\ha\calZ^{\nu,\nu'}$ be the open embeddings. We remark that $\ha\calT^{\nu}$ is smooth and is proper over $\frakg_{\ubar \eta}$. Let $q_1: \calZ^{\nu, \nu'}\to \calT^{\nu}$, $q_2: \calZ^{\nu, \nu'}\to \calT^{\nu'}$, $\ha q_1: \ha\calZ^{\nu, \nu'}\to \ha\calT^{\nu}$ and $\ha q_2: \ha\calZ^{\nu, \nu'}\to \ha\calT^{\nu'}$ be the canonical projections. \par
By the cleanness of $\scrC_{\eta}$ (\cite[\S4]{lusztig95b}), we have $u^{\nu}_!\dot\scrC_{\nu} = u^{\nu}_*\dot\scrC_{\nu}$. Put $\ha\scrK = \scrHom\left( \ha q_2^* u^{\nu'}_*\dot\scrC_{\nu'}, \ha q_1^! u^{\nu}_*\dot\scrC_{\nu} \right)$, which is a complex on $\ha\calZ^{\nu,\nu'}$. We have
\begin{equation*}\begin{aligned}
	\ha\scrK &\cong \scrHom\left( \ha q_2^* u^{\nu'}_!\dot\scrC_{\nu'}, \ha q_1^! u^{\nu}_*\dot\scrC_{\nu} \right) \cong\scrHom\left( u^{\nu,\nu'}_!q_2^*\dot\scrC_{\nu'}, u^{\nu,\nu'}_*q_1^! \dot\scrC_{\nu} \right)\\
	&\cong u^{\nu,\nu'}_*\scrHom\left( q_2^*\dot\scrC_{\nu'}, q_1^! \dot\scrC_{\nu} \right)\cong u^{\nu,\nu'}_*\scrK.
\end{aligned}\end{equation*}
Therefore 
\begin{equation*}\begin{aligned}
				\calH^{\nu, \nu'} = \rmH^*_{\Gztq}\left( \calZ^{\nu, \nu'}, \scrK \right)\cong \rmH^*_{\Gztq}\left( \ha\calZ^{\nu, \nu'}, \ha\scrK \right). 
\end{aligned}\end{equation*}
It follows that the cleanness of the cuspidal local system $\scrC$ assures that we can safely work on the non-proper version $\calT^{\nu}$, $\calZ^{\nu,\nu'}$ and $\scrK$. 
\par

 Now we describe the convolution product. Given $\nu, \nu', \nu''\in \Xi$, we consider the following diagram
\begin{equation*}\begin{aligned}
				\begin{tikzcd}[column sep=5em]
					\calZ^{\nu, \nu'} \times \calZ^{\nu', \nu''}\arrow{d}{t} \arrow[hookleftarrow]{r}&  \calZ^{\nu, \nu'} \times_{\calT^{\nu'}} \calZ^{\nu', \nu''} \arrow{r}{\mu} \arrow{d}{r} & \calZ^{\nu, \nu''}\arrow{d}{s} \\
					\left(\calT^{\nu}\times \calT^{\nu'}\right)\times\left(\calT^{\nu'}\times \calT^{\nu''}\right) & \calT^{\nu}\times \calT^{\nu'}\times \calT^{\nu''} \arrow[swap]{l}{\gamma} \arrow{r}{u} &  \calT^{\nu} \times \calT^{\nu''}
				\end{tikzcd}
\end{aligned}\end{equation*}
There is a sequence of maps
\begin{equation*}\begin{aligned}
	&\rmH^*_{\Gztq}\left( \calZ^{\nu, \nu'}, \scrK \right)\otimes \rmH^*_{\Gztq}\left( \calZ^{\nu', \nu''}, \scrK \right) \\
	&\to\rmH^*_{\Gztq}\left(\calZ^{\nu, \nu'}\times\calZ^{\nu', \nu''}, t^!\left(\dot\scrC_{\nu}\boxtimes\bfD\dot\scrC_{\nu'}\boxtimes\dot\scrC_{\nu'}\boxtimes\bfD\dot\scrC_{\nu''}\right) \right) \\
	& \xrightarrow{t^!\left(\id\to \gamma_*\gamma^*\right)}
	\rmH^*_{\Gztq}\left(\calZ^{\nu, \nu'}\times\calZ^{\nu', \nu''}, t^!\gamma_*\left(\dot\scrC_{\nu}\boxtimes\left(\bfD\dot\scrC_{\nu'}\otimes\dot\scrC_{\nu'}\right)\boxtimes\bfD\dot\scrC_{\nu''}\right) \right) \\
	&\cong \rmH^*_{\Gztq}\left(\calZ^{\nu, \nu'}\times_{\calT^{\nu'}}\calZ^{\nu', \nu''}, r^!\left(\dot\scrC_{\nu}\boxtimes\left(\bfD\dot\scrC_{\nu'}\otimes\dot\scrC_{\nu'}\right)\boxtimes\bfD\dot\scrC_{\nu''}\right) \right) \\
	&\xrightarrow{\bfD \dot\scrC_{\nu}\otimes \dot\scrC_{\nu}\to \bfD \bfk}\rmH^*_{\Gztq}\left(\calZ^{\nu, \nu'}\times_{\calT^{\nu'}}\calZ^{\nu', \nu''}, r^!\left(\dot\scrC_{\nu}\boxtimes\bfD\bfk\boxtimes\bfD\dot\scrC_{\nu''}\right) \right)\\
	&\cong \rmH^*_{\Gztq}\left(\calZ^{\nu, \nu'}\times_{\calT^{\nu'}}\calZ^{\nu', \nu''}, r^!u^!\left(\dot\scrC_{\nu}\boxtimes\bfD\dot\scrC_{\nu''}\right) \right) \\
	&\cong \rmH^*_{\Gztq}\left(\calZ^{\nu, \nu'}\times_{\calT^{\nu'}}\calZ^{\nu', \nu''}, \mu^!\scrK\right)\to \rmH^*_{\Gztq}\left( \calZ^{\nu, \nu''}, \scrK \right).
\end{aligned}\end{equation*}
The convolution product $\calH^{\nu, \nu'}\otimes\calH^{\nu', \nu''} \to\calH^{\nu, \nu''}$ is then defined to be the composite. \par

Later on, it will be crucial for us to be able to analyse the convolution algebra by the Bruhat filtration $\calZ^{\nu,\nu'}_{\calI}$ introduced in~\autoref{subsec:steinberg}. Let $\calI, \calJ$ be ideals of $\calW_{I}\backslash \calW/\calW_I$. By~\autorefitem{prop:strZ}{ii}, we have the map 
\[
	\calZ^{\nu,\nu'}_{\calI}\times_{\calT^{\nu'}}\calZ^{\nu',\nu''}_{\calJ}\to \calZ^{\nu,\nu''}_{\calI\calJ}.
\]
We define similarly the convolution product of them so that the following diagram commutes
\begin{equation*}\begin{aligned}
	\begin{tikzcd}
		\rmH^*_{\Gztq}\left( \calZ^{\nu,\nu'}_{\calI}, i_{\calI}^!\scrK \right) \otimes \rmH^*_{\Gztq}\left( \calZ^{\nu',\nu''}_{\calJ}, i_{\calJ}^!\scrK  \right) \arrow{r}\arrow{d}&  \rmH^*_{\Gztq}\left( \calZ^{\nu,\nu''}_{\calI\calJ}, i_{\calI\calJ}^!\scrK \right) \arrow{d}\\
		\calH^{\nu, \nu'}\otimes\calH^{\nu', \nu''}\arrow{r} & \calH^{\nu, \nu''} \\
	\end{tikzcd},
\end{aligned}\end{equation*}
where the vertical arrows are given by adjunction co-units $i_{\calI!}i_{\calI}^!\to \id$. 
\subsection{Polynomial action}
We define the polynomial algebra $\bfS^{M}= \rmH^*_{\Mztq}(\rmO_{\eta}, \bfk)$\index{S@$\bfS^{M}$}, graded by the cohomological degree. 
\begin{lemm}\label{lemm:SE}
	There is a graded ring isomorphism
	\[
		\bfS^{M}\cong \bfk[\bbE^M_{\diamond}]\otimes \bfk[u],
	\]
	where the linear functions $(\bbE^M_{\diamond})^*\subset \bfk[\bbE^M_{\diamond}]$ and $u\in \bfk[u]$ are of degree $2$.
\end{lemm}
\begin{proof}
	If we choose $e\in \rmO_{\eta}$ and complete it into a $\fraksl_2$-triple $\phi = (e, h, f)$ as in~\autoref{subsec:JM}, then we have
	\[
		\bfS^{M}= \rmH^*_{\Mztq}(\rmO_{\eta}, \bfk)\cong \rmH^*_{Z_{\Mztq}(e)}.
	\]
	As the orbit $\rmO$ support a cuspidal local system, by~\cite[4.2,~4.3]{lusztig95b} we have $Z_{\Mtq}(e)= Z_{\Mztq}(e)$. With the group $Z^q_{\Mt}(\phi)$ as in~\eqref{equa:Mphi}, we have an isomorphism
	\[
		\iota: Z_{\Mt}(\phi)\times \bfC^{\times}\cong Z^q_{\Mt}(\phi),\quad \iota(g, q) = (gq^h, q).
	\]
	Since $Z_{\Mtq}(e)\subset Z^q_{\Mt}(\phi)$ is a maximal reductive subgroup~\cite[2.1]{lusztig88}, we deduce that 
	\[
		\rmH^*_{Z_{\Mtq}(e)}\cong \rmH^*_{Z^q_{\Mt}(\phi)}\cong \rmH^*_{Z_{\Mt}\times \bfC^{\times}}\cong \bfk[\bbE^M_{\diamond}]\otimes \bfk[u]. \qedhere
	\]
\end{proof}
Now we define a $(\bfS^M, \bfS^M)$-bimodule structure on various cohomology groups. For any $\nu\in \Xi$, the stack morphism $[\calT^{\nu}/ \Gztq]\to [\rmO_{\eta}/\Mztq]$ induces by pull-back an isomorphism of equivariant cohomology rings:
\[
	\bfS^M\cong \rmH^*_{\Pztq^{\nu}}(\rmO_{\eta}, \bfk)\cong \rmH^*_{\Pztq^{\nu}}(\rmO_{\eta}\oplus \fraku^{\nu}_{\eta}, \bfk)\cong \rmH^*_{\Gztq}(\calT^{\nu}, \bfk). 
\]
Let $\nu,\nu'\in \Xi$ and suppose that $i_V:V\hookrightarrow \calZ^{\nu,\nu'}$ is a $\Gztq$-stable subvariety. Then the two projections $q_1:\calZ^{\nu,\nu'}\to \calT^{\nu}$ and $q_2:\calZ^{\nu,\nu'}\to \calT^{\nu'}$ induce two stack morphisms
\[
	[\calT^{\nu}/\Gztq]\xleftarrow{\ba q_1}[V/\Gztq]\xrightarrow{\ba q_2}[\calT^{\nu'}/\Gztq].
\]
The diagonal $(\ba q_1, \ba q_2)$ yields a ring homomorphism
\[
	\bfS^M\otimes \bfS^M\cong \rmH^*_{\Gztq\times\Gztq}(\calT^{\nu}\times\calT^{\nu},\bfk)\xrightarrow{(\ba q_1, \ba q_2)^*}\rmH^*_{\Gztq}(V, \bfk),
\]
which defines a graded $(\bfS^M, \bfS^M)$-bimodule structure on $\rmH^*_{\Gztq}(V, i^!_V\scrK)$ by cup product via $(\ba q_1, \ba q_2)^*$.

\subsection{Cohomology of strata}\label{subsec:cohom}
We put for $\nu, \nu'\in \Xi$ and $[w]\in  \calW_{I} \backslash \calW/ \calW_{I}$ 
\begin{equation*}\begin{aligned}
				\calH^{\nu, \nu'}_w = \rmH^*_{\Gztq}\left(\calZ^{\nu, \nu'}_w, i_w^!\scrK \right),
\end{aligned}\end{equation*}
\index{H@$\calH^{\nu, \nu'}_w$}
where $i_w: \calZ_w^{\nu, \nu'}\to \calZ^{\nu, \nu'}$ is the locally closed immersion. The following lemma determines the ``size'' of the extension space $\calH^{\nu,\nu'}$ and is crucial in our calculation of extension algebra.

\begin{prop}\label{prop:SS-bimod}
	Let $\nu, \nu'\in \Xi$ and $[w]\in \calW_{I}\backslash \calW / \calW_{I}$.
	\begin{enumerate}[label=(\roman*)]
		\item\label{prop:SS-bimod-ii}
			If $[w]$ is bad, i.e. $[w]\notin \calW_{\xi}$, then $\calH^{\nu, \nu'}_w = 0$.
		\item\label{prop:SS-bimod-iii}
			If $[w]$ is good, i.e. $[w]\in \calW_{\xi}$, and if $w\ubar{\nu'} = \ubar{\nu}$, then $\calH^{\nu, \nu'}_w$ is a free of rank $1$ as left (resp. right) graded $\bfS^M$-module, vanishing in odd degrees.
		\item\label{prop:SS-bimod-iv}
			If $[w]$ is good but $w\ubar{\nu'} \neq \ubar{\nu}$, then $\calH^{\nu, \nu'}_w = 0$.
	\end{enumerate}
\end{prop}
\begin{proof}
	We prove~\ref{prop:SS-bimod-ii}. Suppose that $[w]\notin \calW_{\xi}$. Recall that there is a left $G_{\ubar 0}$-action on $\calX^{\nu, \nu'}_w$ with a finite number of orbits. Let
	\begin{equation*}\begin{aligned}
		\calX^{\nu, \nu'}_w = \bigsqcup_{\Omega} \calX_{\Omega}
	\end{aligned}\end{equation*}
	be the stratification by $G_{\ubar 0}$-orbits, where $\Omega$ is taken over a finite subset of $P^{\nu}_0\backslash G_{\ubar 0}/P^{\nu'}_{0}$. We define $\calZ_\Omega\subseteq \calZ^{\nu, \nu'}$ to be the pre-image of $\calX_\Omega$ under the projection $\calZ^{\nu, \nu'}\to \calX^{\nu, \nu'}$. Notice that all double cosets $\Omega$ that appear in this stratification are {\it bad} in the sense of~\cite[5.2]{LYI} since $[w]$ is not in $\calW_{\xi}$.\par
	Put
	\begin{equation*}\begin{aligned}
		\dot\frakp^{\nu}_{\eta} = \left(\rmO^{\nu}_{\eta}\oplus \fraku^{\nu}_{\eta}\right)\times_{\frakg_{\ubar \eta}} \calT^{\nu'} 
	\end{aligned}\end{equation*}
	so that $\calZ^{\nu, \nu'} \cong G_{\ubar 0}\times^{P_0^{\nu''}}\dot\frakp^{\nu'}_{\eta}$. Moreover, for each $\Omega$ we put 
	\begin{equation*}\begin{aligned}
		\dot\frakp^{\nu}_{\eta,\Omega} = \left(\rmO^{\nu}_{\eta}\oplus \fraku^{\nu}_{\eta}\right)\times_{\frakg_{\ubar \eta}}\left(\Omega\times^{P^{\nu'}_0}\left(\rmO^{\nu'}_{\eta}\oplus \fraku^{\nu'}_{\eta}\right)\right)\subseteq\dot\frakp^{\nu}_{\eta}
	\end{aligned}\end{equation*}
	then there is a diagram of Cartesian squares
	\begin{equation*}\begin{aligned}
		\begin{tikzcd}
			\dot\frakp^{\nu}_{\eta,\Omega}\arrow{r}{r'}\arrow{d}{q_1'} & \calZ_{\Omega} \arrow{r}{q_2}\arrow{d}{q_1}& \calT^{\nu'} \arrow{d}\\
			\rmO_{\eta}\oplus\fraku^{\nu}_{\eta} \arrow{r}{r} & \calT^{\nu} \arrow{r}& \frakg_{\ubar \eta}
		\end{tikzcd}
	\end{aligned}\end{equation*}
	Let $i:\calZ_{\Omega}\hookrightarrow \calZ^{\nu, \nu'}$ be the inclusion. Now 
	\begin{equation*}\begin{aligned}
		\rmH^*_{\Gztq}\left( \calZ_{\Omega}, i^!\scrK \right) \cong \Ext^*_{\Gztq}\left( q_2^*\dot\scrC_{\nu'}, q_1^!\dot\scrC_{\nu}\right)\cong\Ext^*_{\Gztq}\left( q_{1!}q_{2}^*\dot\scrC_{\nu'}, \dot\scrC_{\nu}\right) \\
		\cong\Ext^*_{\Pztq^{\nu}}\left( r^*q_{1!}q_{2}^*\dot\scrC_{\nu'}, r^*\dot\scrC_{\nu}\right)\cong\Ext^*_{\Pztq^{\nu}}\left( q'_{1!}r'^*q_{2}^*\dot\scrC_{\nu'}, r^*\dot\scrC_{\nu}\right).
	\end{aligned}\end{equation*}
	Taking into account the cuspidality of $\scrC_\eta$ and the fact that $\Omega$ is bad,~\cfauto{prop:gooddoublecosets} and~\autoref{rema:goodbad}, it is shown in~\cite[5.3]{LYI} that $q'_{1!}r'^*q_{2}^*\dot\scrC_{\nu'} = 0$. Thus $\rmH^*_{\Gztq}\left( \calZ_{\Omega}, i^!\scrK \right)  = 0$ for each $\Omega$. By an argument of long exact sequence of cohomology, we conclude that 
	\begin{equation*}\begin{aligned}
		\calH^{\nu, \nu'}_w = \rmH^*_{\Gztq}\left( \calZ^{\nu, \nu'}_{w}, i_w^!\scrK \right) = 0.
	\end{aligned}\end{equation*}
	This proves~\ref{prop:SS-bimod-ii}.
	\par

	We turn to~\ref{prop:SS-bimod-iii}. The arguments follow~\cite[4.2]{lusztig88}. Suppose that $w\nu' = \nu$ with $w\in \calW_{\xi}$. \autoref{prop:strZ} gives an isomorphism
	\[
		G_{\ubar 0}\times^{P^{\nu}_0\cap P^{\nu'}_0}(\rmO_{\eta}\oplus \fraku^{\nu}_{\eta}\cap \fraku^{\nu'}_{\eta})\cong\calZ^{\nu,\nu'}_w. 
	\]
	Let 
	\[
		p_1: G_{\ubar 0}\times^{P^{\nu}_0\cap P^{\nu'}_0}(\rmO_{\eta}\oplus \fraku^{\nu}_{\eta}\cap \fraku^{\nu'}_{\eta})\to \calT^{\nu} ,\quad p_2: G_{\ubar 0}\times^{P^{\nu}_0\cap P^{\nu'}_0}(\rmO_{\eta}\oplus \fraku^{\nu}_{\eta}\cap \fraku^{\nu'}_{\eta})\to \calT^{\nu'}
	\]
	be the two projections. Since $\dot\scrC_{\nu}$ is a local system and since $\calT^{\nu}$ and $G_{\ubar 0}\times^{P^{\nu}_0\cap P^{\nu'}_0}(\rmO_{\eta}\oplus \fraku^{\nu}_{\eta}\cap \fraku^{\nu'}_{\eta})$ are smooth varieties, we have $p^!_1\dot\scrC_{\nu} \cong p_1^*\dot\scrC_{\nu}[2d_{\nu,\nu'}]$ by the Verdier duality, where
	\[
		d_{\nu,\nu'} = \dim P^{\nu'}_{0} / P^{\nu}_{0}\cap P^{\nu'}_{0} - \dim \fraku^{\nu'}_{\eta} / \fraku^{\nu}_{\eta} \cap \fraku^{\nu'}_{\eta}\in \bfZ.
	\]
	Denote by $f:\rmO_{\eta}\oplus \fraku^{\nu}_{\eta}\cap \fraku^{\nu'}_{\eta}\to \rmO_{\eta}$ the projection. Then 
	\[
		\calH^{\nu, \nu'}_w\cong \Ext^{*+2d_{\nu,\nu'}}_{\Gztq}(p_2^*\dot\scrC_{\nu'}, p_1^*\dot\scrC_{\nu}) \cong \Ext^{*+2d_{\nu,\nu'}}_{(P^{\nu}_0\cap P^{\nu'}_0)_{\diamond,q}}(f^*\scrC_{\eta}, f^*\scrC_{\eta})\cong \Ext^{*+2d_{\nu,\nu'}}_{\Mztq}(\scrC_{\eta}, \scrC_{\eta}).
	\]
	Since $\scrC_{\eta}$ is irreducible, the last term is isomorphic to $\rmH^{*+2d_{\nu,\nu'}}_{\Mztq}(\rmO_{\eta}, \bfk)$, which is a graded-free left (resp. right) $\bfS^M$-module of rank $1$ shifted by an even degree. By~\autoref{lemm:SE}, it vanishes in odd degrees, whence~\ref{prop:SS-bimod-iii}.
	\par
	Finally, if $w\in \calW_{\xi}$ but $w\ubar\nu' \neq \ubar\nu$, then $\calX^{\nu, \nu'}_w = \emptyset$ and $\calZ^{\nu, \nu'}_w = \emptyset$ by~\autorefitem{lemm:uniqueorbit}{ii}, so $\calH^{\nu, \nu'}_w = 0$, whence~\ref{prop:SS-bimod-iv}. 
\end{proof}
\begin{rema}
	The special feature of convolution algebra with a cuspidal local system as coefficient is that not every Bruhat cell on the partial flag variety contributes to the cohomology, but only those corresponding to the subset $\calW_{\xi}\subset \calW_I \backslash \calW / \calW_I$ do. \autoref{prop:SS-bimod} roughly says that the convolution algebra $\ha\calH$ and the dDAHA $\bfH_{\xi}$ are of the same {\itshape size}. The proof is adapted from~\cite[4.7]{lusztig88}.
\end{rema} 
\subsection{Filtration by Bruhat order}\label{subsec:filtration}

Recall that by~\autoref{prop:strZ}, $\calZ^{\nu,\nu'}_{\calI}$ is a closed subvariety of $\calZ^{\nu,\nu'}$. Denote by $i_{\calI}:\calZ^{\nu,\nu'}_{\calI}\hookrightarrow \calZ^{\nu,\nu'}$ the inclusion.

\begin{prop}\label{prop:HI}
	For each ideal $\calI\subset\calW_I\backslash \calW / \calW_I$, the space 
	\[
		\calH^{\nu,\nu'}_{\calI} = \rmH^*_{\Gztq}\left( \calZ^{\nu,\nu'}_{\calI}, i_{\calI}^!\scrK \right)
	\]
	is a graded $(\bfS^M,\bfS^M)$-bimodule which vanishes in odd degrees and is graded-free as one-sided $\bfS^M$-module. Moreover, if $\calJ\subset \calI$ is a sub-ideal, then the inclusion $\calZ^{\nu,\nu'}_{\calJ}\subseteq \calZ^{\nu,\nu'}_{\calI}$ induces an injective map of graded $(\bfS^M,\bfS^M)$-bimodules
	\begin{equation*}\begin{aligned}
		\calH^{\nu,\nu'}_{\calJ}\hookrightarrow \calH^{\nu,\nu'}_{\calI}.
	\end{aligned}\end{equation*}
\end{prop}\index{H@$\calH^{\nu,\nu'}_{\calI}$}
\begin{proof}
	We may assume that $\calI$ is finite because there is only a finite number of $[w]\in \calW_I\backslash \calW / \calW_I$ such that $\calZ^{\nu,\nu'}_{w}\neq \emptyset$. The first statement is proven by induction on $\# \calI$ and~\autoref{prop:SS-bimod}. The second statement is a direct consequence of the first one.
\end{proof}
Consequently, we may view $\calH_{\calJ}^{\nu,\nu'}$ as subspace of $\calH^{\nu,\nu'}$ by taking $\calI = \calW_I \backslash \calW / \calW_I$.
\begin{prop}\label{prop:Hquot}
	For any pair of ideals $\calI,\calJ\subset\calW_I \backslash \calW / \calW_I$, let $\calI\calJ \subset\calW_I \backslash \calW / \calW_I$ be the product ideal introduced in~\autoref{lemm:CC}. Then the convolution product
	\begin{equation*}\begin{aligned}
		\calH^{\nu,\nu'}_{\calI}\otimes\calH^{\nu',\nu''}_{\calJ} \to \calH^{\nu,\nu''}
	\end{aligned}\end{equation*}
	factorises through the subspace $\calH^{\nu,\nu''}_{\calI\calJ}\subset\calH^{\nu,\nu''}$.
\end{prop}
\begin{proof}
	It follows from~\autoref{lemm:XI} that the projection
	\[
		\calX^{\nu,\nu'}_{\calI}\times_{\calP^{\nu'}}\calX^{\nu',\nu''}_{\calJ} \to\calX^{\nu,\nu''}
	\]
	lies in $\calX^{\nu,\nu''}_{\calI\calJ}$. It follows that the image of
	\[
		\calZ^{\nu,\nu'}_{\calI}\times_{\calT^{\nu'}}\calZ^{\nu',\nu''}_{\calJ} \to\calZ^{\nu,\nu''}
	\]
	lies in $\calZ^{\nu,\nu''}_{\calI\calJ}$.
\end{proof}

\subsection{Convolution product over good strata}\label{subsec:grconv}

For each $y\in \calW_{\xi}$, we have two ideals $\calI_y = \left\{ [w]\;;\; w\le y \right\}$ and $\calJ_y = \calI_y\setminus [y]$ of $\calW_I\backslash \calW / \calW_I$. Denote 
\begin{equation*}\begin{aligned}
	\calH^{\nu,\nu'}_{\le y} = \calH^{\nu,\nu'}_{\calI_y},\quad  \calH^{\nu,\nu'}_{< y} = \calH^{\nu,\nu'}_{\calJ_y}. 
\end{aligned}\end{equation*}
There is a short exact sequence of $(\bfS^M,\bfS^M)$-bimodules
\begin{equation*}\begin{aligned}
	0\to \calH^{\nu,\nu'}_{< y}\to \calH^{\nu,\nu'}_{\le y}\to \calH^{\nu,\nu'}_{y} \to 0.
\end{aligned}\end{equation*}

If $w,w'\in \calW_{\xi}$ are such that $\ell_{\xi}(ww') = \ell_{\xi}(w) + \ell_{\xi}(w')$, then $\ell(ww') = \ell(w) + \ell(w')$ by~\autorefitem{prop:relW}{iii} and therefore there are inclusions
\[
	\calI_w\cdot \calI_{w'} \subset \calI_{ww'},\quad \calI_w\cdot \calJ_{w'} \subset \calJ_{ww'},\quad \calJ_w\cdot \calI_{w'} \subset\calJ_{ww'}.
\]
Thus~\autoref{prop:Hquot} yields a map
\begin{equation}\begin{aligned}\label{equa:conv}
	\calH^{\nu,\nu'}_{w}\otimes \calH^{\nu',\nu''}_{w'}\cong (\calH^{\nu,\nu'}_{\le w} / \calH^{\nu,\nu'}_{< w})\otimes (\calH^{\nu',\nu''}_{\le w'} / \calH^{\nu',\nu''}_{< w'}) \to \calH^{\nu,\nu''}_{\le ww'} / \calH^{\nu,\nu''}_{< ww'} = \calH^{\nu,\nu''}_{ww'}.
\end{aligned}\end{equation}

\begin{prop}\label{lemm:isostr}
	Given $\nu\in \Xi$, $w,w'\in \calW_{\xi}$ such that $\ell_{\xi}(ww') = \ell_{\xi}(w) + \ell_{\xi}(w')$, let $\nu' =  w^{-1}\nu$ and $\nu'' = w'^{-1}\nu'$. 
	\begin{enumerate}[label=(\roman*)]
		\item\label{lemm:isostr-iii}
			The map
			\begin{equation*}\begin{aligned}
				\calH^{\nu, \nu'}_{w}\otimes \calH^{\nu', \nu''}_{w'} \to \calH^{\nu, \nu''}_{ww'}
			\end{aligned}\end{equation*} from~\eqref{equa:conv} is surjective.
		\item \label{lemm:isostr-iv}
			There is an isomorphism of algebras $\bfS^M\cong\calH^{\nu, \nu}_e$ which is at the same time an $(\bfS^M,\bfS^M)$-bimodule isomorphism. 
	\end{enumerate}
\end{prop}
\begin{rema}
	More generally, one can show that the associated graded algebra of the Bruhat filtration on the sum $\bigoplus_{\ubar\nu,\ubar\nu'}\calH^{\nu,\nu'}$ is isomorphic to a certain skew tensor product of the nil-Hecke algebra of $\calW_{\xi}$ with the ``polynomial part'' $\bigoplus_{\ubar\nu}\calH^{\nu,\nu}_e$. However, we will not need it.
\end{rema}
\begin{proof}
	We proceed in steps. 
	\begin{enumerate}
		\item[\ul{Step 1}.]
			We show that the projection $\calZ^{\nu, \nu'}\times_{\calT^{\nu'}} \calZ^{\nu', \nu''}\to \calZ^{\nu,\nu''}$ induces an isomorphism 
			\[
				\ba\mu:\calZ^{\nu, \nu'}_{w}\times_{\calT^{\nu'}} \calZ^{\nu', \nu''}_{w'}\cong  \calZ^{\nu, \nu''}_{ww'}.  
			\]
			Let $\kappa\in \frakA$ be an alcove such that $\partial_{I_{\xi}}\kappa = \nu$. We view $w$ and $w'$ as elements of $\calW$ via the splitting of shortest representative $\calW_\xi\hookrightarrow N_{\calW}(\calW_{I})$ of~\autorefitem{prop:relW}{ii}. From~\autoref{prop:Ppoids}, we deduce the following formula
			\begin{equation*}\begin{aligned}
				\sum_{n\in \bfZ} \dim (\frakp^{\nu}_n/ \frakp^{\nu}_n\cap \frakp^{w^{-1}\nu}_n) = \#\left(w\Delta^{\kappa}\cap -\Delta^{\kappa}\right) = \ell(w),
			\end{aligned}\end{equation*}
			from which we get inequalities
			\begin{equation*}\begin{aligned}
				\ell(ww') &= \sum_{n\in \bfZ} \dim (\frakp^{\nu}_n/ \frakp^{\nu}_n\cap \frakp^{\nu''}_n) \le \sum_{n\in \bfZ}\dim (\frakp^{\nu}_n/ \frakp^{\nu}_n\cap \frakp^{\nu'}_n\cap \frakp^{\nu''}_n) \\
				&=  \sum_{n\in \bfZ}\dim (\frakp^{\nu}_n/ \frakp^{\nu}_n\cap \frakp^{\nu'}_n) + \sum_{n\in \bfZ}\dim (\frakp^{\nu}_n\cap \frakp^{\nu'}_n/ \frakp^{\nu}_n\cap \frakp^{\nu'}_n\cap \frakp^{\nu''}_n) \le \ell(w) + \ell(w').
			\end{aligned}\end{equation*}
			By~\autorefitem{prop:relW}{iii}, we have $\ell(ww') = \ell(w) + \ell(w')$, from which we deduce $P^{\nu}_0\cap P^{\nu'}_0\cap P^{\nu''}_0 = P^{\nu}_0\cap P^{\nu''}_0$ as well as $\frakp^{\nu}_\eta\cap \frakp^{\nu'}_\eta\cap \frakp^{\nu''}_{\eta} = \frakp^{\nu}_{\eta}\cap \frakp^{\nu''}_{\eta}$. By~\autoref{prop:strZ}, we see that
			\begin{equation*}\begin{aligned}
				\calZ^{\nu,\nu'}_{w}\times_{\calT^{\nu'}}\calZ^{\nu',\nu''}_{w'} &\cong G_{\ubar 0}\times^{P^{\nu}_0\cap P^{\nu'}_0\cap P^{\nu''}_0}\left( \rmO_{\eta}\oplus\fraku^{\nu}_{\eta}\cap\fraku^{\nu'}_{\eta}\cap \fraku^{\nu''}_{\eta} \right) \\
				&\cong G_{\ubar 0}\times^{P^{\nu}_0\cap P^{\nu''}_0}\left(\rmO_{\eta}\oplus \fraku^{\nu}_{\eta}\cap \fraku^{\nu''}_{\eta} \right)\cong \calZ^{\nu,\nu''}_{ww'},
			\end{aligned}\end{equation*}
			which proves the claim. \par
		\item[\ul{Step 2}.]
			We show that the map~\eqref{equa:conv} agrees with the {\it Gysin map} of the closed embedding 
			\begin{equation*}\begin{aligned}
				\calZ^{\nu, \nu''}_{ww'}\cong \calZ^{\nu, \nu'}_{w}\times_{\calT^{\nu'}} \calZ^{\nu', \nu''}_{w'} \hookrightarrow \calZ^{\nu, \nu'}_{w}\times \calZ^{\nu', \nu''}_{w'}.
			\end{aligned}\end{equation*}
			It is a consequence of the transversality proven in Step 1\footnote{It is a cohomological analogue of the well-known property in the intersection theory that ``refined Gysin map = Gysin map'' in the case of transversal intersection. Indeed, the convolution product that we have defined makes use of the refined Gysin map from the diagonal embedding of $\calT^{\nu'}$, which is a regular embedding, see~\cite[6.3.2]{fulton98}}, see also the proof of~\cite[7.6.12]{CG}. Consider the Cartesian square
			\begin{equation*}\begin{aligned}
				\begin{tikzcd}
					\calZ^{\nu, \nu'}_w \times\calZ^{\nu', \nu''}_{w'}\arrow[swap]{d}{k = (q_1,q_2)\times(q_1,q_2)} & \calZ^{\nu, \nu'}_w\times_{\calT^{\nu'}}\calZ^{\nu', \nu''}_{w'} \arrow[swap]{l}{\ba \gamma}\arrow{d}{k'}\arrow{r}{\ba\mu}[swap]{\cong}  & \calZ^{\nu,\nu''}_{ww'} \\
					\left(\calT^{\nu}\times \calT^{\nu'}\right) \times \left(\calT^{\nu'}\times \calT^{\nu''}\right) & \calT^{\nu}\times \calT^{\nu'} \times \calT^{\nu''} \arrow{l}{\gamma}
				\end{tikzcd}.
			\end{aligned}\end{equation*}
			All the varieties appeared are smooth, all the morphisms are immersions. By dimension count, the immersions $k$ and $\gamma$ intersect transversally. It gives rise to a commutative triangle in $\Db_{\Gztq}(\calZ^{\nu, \nu'}_w \times\calZ^{\nu', \nu''}_{w'})$:
			\begin{equation}\begin{aligned}\label{equa:adjbas}
				\begin{tikzcd}[column sep=7em]
					& \ba \gamma_*\ba \gamma^* k^!\bfD \bfk \arrow{d}{\cong} \\
					k^!\bfD \bfk \arrow[swap]{r}{k^!\left(\id \to  \gamma_*\gamma^*\right)}\arrow{ur}{\id \to  \ba \gamma_*\ba \gamma^*} & k^!\gamma_*\gamma^*\bfD \bfk
				\end{tikzcd}.
			\end{aligned}\end{equation}
			Let $\scrG = \dot\scrC_{\nu}\boxtimes \bfD \dot\scrC_{\nu'}\boxtimes \dot\scrC_{\nu'} \boxtimes \bfD \dot\scrC_{\nu''}$ be on $\calT^{\nu}\times\calT^{\nu'}\times \calT^{\nu'}\times \calT^{\nu''}$. Applying $\scrHom(k^*\bfD \scrG, \relbar)$ to the commutative triangle~\eqref{equa:adjbas}, we obtain another commutative triangle
			\begin{equation}\begin{aligned}\label{equa:adjbasii}
				\begin{tikzcd}[column sep=7em]
					& \ba \gamma_*\ba \gamma^* k^!\scrG \arrow{d}{\cong} \\
					k^!\scrG \arrow[swap]{r}{k^!\left(\id \to  \gamma_*\gamma^*\right)}\arrow{ur}{\id \to  \ba \gamma_*\ba \gamma^*} & k^!\gamma_*\gamma^*\scrG
				\end{tikzcd}.
			\end{aligned}\end{equation}
			We apply the functor $\rmH^*_{\Gztq\times \Gztq}(\calZ^{\nu,\nu'}_w\times \calZ^{\nu,\nu'}_{w'}, \relbar)$ on~\eqref{equa:adjbasii}. The morphism $k^!(\id\to \gamma_*\gamma^*)$ yields the convolution product introduced in~\autoref{sec:convolution} and the morphism $\id\to \ba\gamma_*\ba\gamma^*$ yields the Gysin map. The commutativity of~\eqref{equa:adjbasii} implies that these two products coincide, whence the claim. \par
		\item[\ul{Step 3}.]
			We prove~\ref{lemm:isostr-iii}. Step 2 allows us to compute the map~\eqref{equa:conv} on graded pieces with the following commutative diagram:
			\begin{equation*}\begin{aligned}
				\begin{tikzcd}[column sep=5em]
					{\left[\calZ^{\nu, \nu'}_w / \Gztq\right]}\times {[\calZ^{\nu', \nu''}_{w'} / \Gztq]} \arrow{d} & {[\calZ^{\nu, \nu''}_{ww'} / \Gztq]} \arrow{l}[swap]{\ba\gamma\ba\mu^{-1}}\arrow{d} \\
					{\left[\calO_{\eta} / \Mztq\right]}\times {[\calO_{\eta} / \Mztq]} & {[\calO_{\eta} / \Mztq]} \arrow{l}[swap]{\Delta}
				\end{tikzcd}.
			\end{aligned}\end{equation*}
			The arguments in the proof of~\autoref{prop:SS-bimod}~\ref{prop:SS-bimod-iii} shows that the vertical arrows induce isomorphisms on cohomology groups, so that we have:
			\begin{equation}\begin{aligned}\label{equa:SetH}
				\begin{tikzcd}[column sep=7em]
					\rmH^*_{\Gztq}\left(\calZ^{\nu, \nu'}_w, i_w^!\scrK\right)\otimes \rmH^*_{\Gztq}\left(\calZ^{\nu', \nu''}_{w'}, i_{w'}^!\scrK\right) \arrow{r}{\id\to (\ba\gamma\ba\mu^{-1})_*(\ba\gamma\ba\mu^{-1})^*}& \rmH^*_{\Gztq}\left(\calZ^{\nu, \nu''}_{ww'}, i_{ww'}^!\scrK\right) \\
					\rmH^{*+2d_{\nu,\nu'}}_{\Mztq}\left(\calO_{\eta}, \bfk\right)\otimes \rmH^{*+2d_{\nu',\nu''}}_{\Mztq}\left(\calO_{\eta}, \bfk\right) \arrow{r}{\id\to \Delta_*\Delta^*} \arrow{u}{\cong}& \rmH^{*+2d_{\nu,\nu''}}_{\Mztq}\left(\calO_{\eta}, \bfk\right)  \arrow{u}{\cong}
				\end{tikzcd},
			\end{aligned}\end{equation}
			where the bottom row is identified with the cup product of $\bfS^M =\rmH^*_{\Mztq}\left(\calO_{\eta}, \bfk\right)$ shifted by appropriate degrees, which is surjective. This proves~\ref{lemm:isostr-iii}. Setting $\nu = \nu' = \nu''$ in~\eqref{equa:SetH}, we get~\ref{lemm:isostr-iv}.\qedhere
	\end{enumerate}
\end{proof}

\subsection{Parabolic subalgebras}\label{subsec:parabH}
In~\autoref{subsec:affinehecke}, we have introduced an algebra 
\[
	\ha\calH_{\sigma} = \bigoplus_{\ubar\nu,\ubar\nu'\in \ubar\Xi^{\sigma}} \Ext^*_{\Lztq^{\sigma}}(\Ind^{\sigma}_{\nu'}\scrC, \Ind^{\sigma}_{\nu}\scrC)_{0}
\]
for each $\bbE^M$-facet $\sigma$. This algebra can be thought of as parabolic subalgebra of $\ha\calH$ associated to the facet $\sigma$. The analysis of $\calZ^{\nu,\nu'}$ and $\calH^{\nu,\nu'}$ in~\autoref{sec:geomZ} and in~\autoref{sec:convolution} can equally be done for $\sigma$. In~\autoref{prop:imagepsi}, we will use the geometry of Steinberg varieties to study the map $\psi_J$ appearing in~\autoref{theo:Phi}. \par

Let $\sigma\in \frakF(\bbE^M)$ be an $\bbE^M$-facet and let $J\subset \Delta$ denote its type (\autoref{subsec:facetcox}) so that $\sigma\in \frakF_J$. By~\autoref{prop:Wxi}~\ref{prop:Wxi-iv}, we have $I = I_\xi\subset J$. 
For $\nu,\nu'\in \Xi^{\sigma} = \left\{ \nu\in \Xi\;;\; \sigma\le \nu \right\}$, put
\begin{equation*}\begin{aligned}
	\calX^{\nu,\nu'}_{\sigma} &= (L^{\sigma}_0 / P^{\nu\le \sigma}_0)\times ( L^{\sigma}_0 / P^{\nu'\le \sigma}_0 ), \\
	\calT_{\sigma}^{\nu} &= L^{\sigma}_0\times^{P^{\sigma\le \nu}_0}(\rmO_{\eta}\oplus \fraku^{\sigma\le \nu}_\eta)\to \frakl^{\sigma}_{\eta},&  \calZ_{\sigma}^{\nu,\nu'} &= \calT_{\sigma}^{\nu}\times_{\frakl^\sigma_{\eta}}\calT_{\sigma}^{\nu'}.
\end{aligned}\end{equation*}
There are $L^{\sigma}_0$-stable stratifications 
\[
	\calX^{\nu,\nu'}_{\sigma} = \bigcup_{[w]\in \calW_I\backslash \calW_J /\calW_I} \calX^{\nu,\nu'}_{\sigma, w},\quad \calZ^{\nu,\nu'}_{\sigma} = \bigcup_{[w]\in \calW_I\backslash \calW_J /\calW_I} \calZ^{\nu,\nu'}_{\sigma, w}
\]
so that $(P^{\sigma\le \nu}_0, P^{\sigma\le \nu'}_0)\in \calX^{\nu,\nu'}_{\sigma, w}$ for $w\in N_{\calW_J}(\calW_I)$ such that $w\nu' = \nu$. 
\begin{prop}\label{prop:XZpara}
	The statements below hold for $\nu,\nu',\nu''\in \Xi^{\sigma}$ and for ideals $\calI,\calJ\subset \calW_I\backslash \calW_J/\calW_I$. 
	\begin{enumerate}
		\item \label{prop:XZpara-i}
			The subvarieties $\calX^{\nu,\nu'}_{\sigma,\calI}\subset \calX^{\nu,\nu'}_{\sigma}$ and $\calZ^{\nu,\nu'}_{\sigma,\calI}\subset \calZ^{\nu,\nu'}_{\sigma}$ are closed.
		\item \label{prop:XZpara-ii}
			We have 
			\begin{equation*}\begin{aligned}
				\calX^{\nu,\nu'}_{\sigma,\calI}\times_{L^{\sigma}_0 / P^{\sigma\le \nu'}_0}\calX^{\nu',\nu''}_{\sigma,\calJ}\to \calX^{\nu',\nu''}_{\sigma,\calI\calJ},\quad \calZ^{\nu,\nu'}_{\sigma,\calI}\times_{L^{\sigma}_0 / P^{\sigma\le \nu'}_0}\calZ^{\nu',\nu''}_{\sigma,\calJ}\to \calZ^{\nu',\nu''}_{\sigma,\calI\calJ}.
			\end{aligned}\end{equation*}
		\item \label{prop:XZpara-iii}
			For each $w\in \calW_{\xi,J}$ such that $w\ubar\nu' = \ubar \nu$, there are isomorphisms
			\begin{equation*}\begin{aligned}
				\calX^{\nu,\nu'}_{\sigma,w} \cong L^{\sigma}_0/ P^{\sigma\le \nu}_0\cap P^{\sigma\le w^{-1}\nu}_0 ,\quad
				\calZ^{\nu,\nu'}_{\sigma,w} \cong L^{\sigma}_0\times^{P^{\sigma\le \nu}_0\cap P^{\sigma\le w^{-1}\nu}_0}(\rmO_{\eta}\oplus \fraku^{\sigma\le \nu}_{\eta}\cap \fraku^{\sigma\le w^{-1}\nu}_{\eta}).
			\end{aligned}\end{equation*}
	\end{enumerate}
\end{prop}
\begin{proof}
	See~\autoref{lemm:XI} and~\autoref{prop:strZ}.
\end{proof}
For $\nu\in \Xi^{\sigma}$, the inverse image of $\scrC_{\eta}$ under the stack morphism $f: [\calT^{\nu}_{\sigma} / \Lztq^{\sigma}]\to [\rmO_{\eta}/\Mztq]$ is denoted by $\dot\scrC$. We have similarly a complex $\scrK_{\sigma}$ on $\calZ^{\nu,\nu'}_{\sigma}$ such that 
\[
	\calH^{\nu,\nu'}_{\sigma} := \Ext^*_{\Lztq^{\sigma}}(\Ind^\sigma_{\nu'}\scrC, \Ind^\sigma_\nu\scrC) \cong \rmH^*_{\Lztq^{\sigma}}(\calZ^{\nu,\nu'}_{\sigma}, \scrK_{\sigma}). 
\]
As in~\autoref{subsec:filtration}, we put a filtration on $\calH^{\nu,\nu'}_{\sigma}$ by defining for each ideal $\calI\subset \calW_I\backslash \calW_J / \calW_I$ the following space
\[
	\calH^{\nu,\nu'}_{\sigma, \calI} = \rmH^*_{\Lztq^{\sigma}}(\calZ^{\nu,\nu'}_{\sigma,\calI}, i^!_{\calI}\scrK_{\sigma}),\quad i_{\calI}:\calZ^{\nu,\nu'}_{\sigma,\calI}\hookrightarrow \calZ^{\nu,\nu'}_{\sigma}.
\]
For each $[w]\in\calW_I\backslash \calW_J / \calW_I$, we set
\[
	\rmH^*_{\Lztq^{\sigma}}(\calZ^{\nu,\nu'}_{\sigma,w}, i^!_{w}\scrK_{\sigma}),\quad i_{w}:\calZ^{\nu,\nu'}_{\sigma,w}\hookrightarrow \calZ^{\nu,\nu'}_{\sigma}.
\]
Then~\autoref{prop:SS-bimod} and~\autoref{lemm:isostr} also have analogues for $\calH^{\nu,\nu'}_{\sigma,w}$ and $\calH^{\nu,\nu'}_{\sigma,\calI}$. \par

Now we consider the spiral induction $\Ind_{\sigma}$, the functoriality of which yields a map $\psi_{\sigma}$\index{ps@$\psi_{\sigma}$} for $\nu,\nu'\in \Xi^{\sigma}$:
\begin{equation}\begin{aligned}\label{equa:indsigma}
	\calH^{\nu,\nu'}_{\sigma} = \Ext^*_{\Lztq^{\sigma}}(\Ind^{\sigma}_{\nu'}\scrC, \Ind^{\sigma}_{\nu'}\scrC)\xrightarrow{\psi_{\sigma}} \Ext^*_{\Gztq}(\Ind_{\sigma}\Ind^{\sigma}_{\nu'}\scrC, \Ind_{\sigma}\Ind^{\sigma}_{\nu'}\scrC)\cong \calH^{\nu,\nu'}.
\end{aligned}\end{equation}
\begin{prop}\label{prop:imagepsi}
	The map $\psi_{\sigma}$ is injective and for each ideal $\calI\subset \calW_I\backslash \calW_J / \calW_I$, we have 
	\[
		\psi_{\sigma}(\calH^{\nu,\nu'}_{\sigma,\calI}) = \calH^{\nu,\nu'}_{\calI}.
	\]
	In particular, $\psi_{\sigma}$ induces an isomorphism $\calH^{\nu,\nu'}_{\sigma} \cong \calH^{\nu,\nu'}_{\le w^J_0}$, where $w^J_0$ is the longest element of the finite Coxeter subgroup $(\calW_J, J)\subset(\calW, \Delta)$.
\end{prop}
\begin{proof}
			Set
			\begin{equation*}\begin{aligned}
				\pi^*\calT_{\sigma}^{\nu} &= L^{\sigma}_0\times^{P^{\sigma\le \nu}_0}(\rmO_{\eta}\oplus \fraku^{\nu}_\eta)\to \frakp^{\sigma}_{\eta},& \pi^*\calZ_{\sigma}^{\nu,\nu'} &= \calT_{\sigma}^{\nu}\times_{\frakp^\sigma_{\eta}}\calT_{\sigma}^{\nu'}.
			\end{aligned}\end{equation*}
			We have the following two Cartesian squares 
			\begin{equation}\begin{aligned}\label{equa:cartesiens}
				\begin{tikzcd}
					\calZ^{\nu,\nu'}_{\sigma}\arrow{d} & \pi^*\calZ^{\nu,\nu'}_\sigma \arrow{l}{\til\pi}\arrow{d} \\
					\frakl^{\sigma}_{\eta} & \frakp^{\sigma}_{\eta}\arrow{l}{\pi}
				\end{tikzcd}\quad 
				\begin{tikzcd}
					\Gz\times^{P^{\sigma}_0}\pi^*\calZ^{\nu,\nu'}_{\sigma}\arrow{d}\arrow{r}{\mu} & \calZ^{\nu,\nu'}\arrow{d}\\
					\Gz\times^{P^{\sigma}_0}\calX^{\nu,\nu'}_{\sigma}\arrow{r}{\ba\mu}  & \calX^{\nu,\nu'}
				\end{tikzcd},
			\end{aligned}\end{equation}
			where
			\begin{equation*}\begin{aligned}
				\ba\mu:G_{\ubar 0}\times^{P^{\sigma}_0}\calX^{\nu,\nu'}_{\sigma} &\to \calX^{\nu,\nu'} \\
				(h, (gP^{\sigma\le\nu}_0, g'P^{\sigma\le\nu'}_0))&\mapsto (hg P^{\nu}_0, hg' P^{\nu'}_0).
			\end{aligned}\end{equation*}
			Observe the following:
	\begin{enumerate}
		\item
			The complex $\til\pi^*\scrK_{\sigma}$ on the stack $[\pi^*\calZ_{\sigma} / \Pztq^{\sigma}]\cong [\Gz\times^{P^{\sigma}_0}\pi^*\calZ_{\sigma} / \Gztq]$ is canonically isomorphic to $\mu^!\scrK$. 
		\item
			The map $\psi_{\sigma}$ can be described in terms of the $\calZ$-varieties:
			\begin{equation*}\begin{aligned}
				\calH^{\nu,\nu'}_{\sigma} &= \rmH^*_{\Lztq^{\sigma}}(\calZ^{\nu,\nu'}_{\sigma}, \scrK_{\sigma})\xrightarrow{\til\pi^*} \rmH^*_{\Pztq^{\sigma}}(\pi^*\calZ^{\nu,\nu'}_{\sigma}, \til\pi^*\scrK_{\sigma}) \\
				&\cong \rmH^*_{\Gztq}(\Gz\times^{P^{\sigma}_0}\pi^*\calZ^{\nu,\nu'}_{\sigma}, \mu^! \scrK) \xrightarrow{\mu_!} \rmH^*_{\Gztq}(\calZ^{\nu,\nu'}, \scrK) = \calH^{\nu,\nu'}.
			\end{aligned}\end{equation*}
	\end{enumerate}
	The map $\til\pi^*$ is an isomorphism since $\til\pi$ is a vector bundle. Thus it remains to examine the image of map $\mu_!$. Observe also that: 
	\begin{enumerate}[resume]
		\item
			The map $\ba\mu$ preserves the stratification on $\calX^{\nu,\nu'}_{\sigma}$ and $\calX^{\nu,\nu'}$. In other words, for each $[w]\in \calW_I\backslash \calW_J / \calW_I$, we have
			\[
				\ba\mu(\Gz\times^{P^{\sigma}_0}\calX^{\nu,\nu'}_{\sigma,w}) \subseteq \calX^{\nu,\nu'}_w. 
			\]
			Using the Cartesian square in the right of~\eqref{equa:cartesiens}, we deduce that
			\[
				\mu(\Gz\times^{P^{\sigma}_0}\calZ^{\nu,\nu'}_{\sigma,w}) \subseteq \calZ^{\nu,\nu'}_w.
			\]
			Consequently, we see that $\psi_{\sigma}(\calH^{\nu,\nu'}_{\sigma, \calI})\subset \calH^{\nu,\nu'}_{\calI}$.
		\item
			For each $w\in \calW_{\xi,J}$, the map $\mu$ induces an isomorphism by restriction
			\[
				\Gz\times^{P^{\sigma}_0}\pi^*\calZ^{\nu,\nu'}_{\sigma,w} = \mu^{-1}(\calZ^{\nu,\nu'}_{w})\xrightarrow{\cong} \calZ^{\nu,\nu'}_{w}. 
			\]
			Indeed, it follows immediately from the descriptions~\autoref{prop:strZ}~\ref{prop:strZ-iii} and~\autoref{prop:XZpara}~\ref{prop:XZpara-iii}. 	
	\end{enumerate}
	For each ideal $\calI\subset \calW_I\backslash \calW_J / \calW_I$, we show that $\mu_!$ induces an isomorphism
	\begin{equation*}\begin{aligned}
		\calH^{\nu,\nu'}_{\sigma,\calI}= \rmH^*_{\Gztq}(\Gz\times^{P^{\sigma}_0}\pi^*\calZ^{\nu,\nu'}_{\sigma,\calI}, \mu^!i^!_{\calI} \scrK) \xrightarrow{\cong} \rmH^*_{\Gztq}(\calZ^{\nu,\nu'}_{\calI}, i^!_{\calI}\scrK)= \calH^{\nu,\nu'}_{\calI}.
	\end{aligned}\end{equation*}
	We may suppose that $\calI$ is finite and prove this by induction on $\#\calI$. Let $\calJ\subset \calI$ be a subideal such that $\#(\calI \setminus\calJ) = 1$. Denote $[w] = \calI \setminus\calJ$. There are two cases:
	\begin{itemize}
		\item 
			if $[w]\notin \calW_{\xi,J}$, then $\calH^{\nu,\nu'}_{\sigma,w} = 0 = \calH^{\nu,\nu'}_{w}$ by~\autorefitem{prop:SS-bimod}{ii} and its analogue for $\calH^{\nu,\nu'}_{\sigma}$;
		\item 
			if $[w]\in \calW_{\xi,J}$, then by (iv) above, $\mu_!$ yields an isomorphism
			\[
				\rmH^*_{\Gztq}(G_{\ubar 0}\times^{P^{\sigma}_0}\calZ^{\nu,\nu'}_{\sigma, w}, \mu^!i^!_w\scrK_{\sigma}) \xrightarrow{\cong} \rmH^*_{\Gztq}(\calZ^{\nu,\nu'}_{w}, i^!_w\scrK_{\sigma}).
			\]
	\end{itemize}
	Hence $\psi_{\sigma}$ induces an isomorphism on the graded pieces 
	\[
		\calH^{\nu,\nu'}_{\calI} / \calH^{\nu,\nu'}_{\calJ} \cong\calH^{\nu,\nu'}_{w},\quad \calH^{\nu,\nu'}_{\sigma, \calI} / \calH^{\nu,\nu'}_{\sigma,\calJ} \cong\calH^{\nu,\nu'}_{\sigma,w}.
	\]
	The claim follows from this and the induction hypothesis. This completes the proof.
\end{proof}

\section{Density of the image of \texorpdfstring{$\Phi$}{Φ}}\label{sec:density}

Recall that we have defined the following completed extension algebra in~\autoref{subsec:indcusp}:
\[
	\ha\calH = \prod_{\ubar\nu'\in \ubar\Xi}\bigoplus_{\ubar\nu\in \ubar\Xi} \ha\calH^{\nu,\nu'} ,\quad \ha\calH^{\nu,\nu'} = \Ext^*_{\Gztq}(\Ind_{\nu'}\scrC, \Ind_{\nu}\scrC)_{0}
\]
and we have studied the space $\calH^{\nu,\nu'} = \Ext^*_{\Gztq}(\Ind_{\nu'}\scrC, \Ind_{\nu}\scrC)$ in~\autoref{sec:convolution}. The completion $\ha\calH^{\nu,\nu'}$ is equipped with the $\frako$-adic topology, where $\frako = \rmH^{>0}_{\Gztq}$, and $\ha\calH$ is a topological ring with the product (with respect to $\ubar\nu'$) of the coproduct (with respect to $\ubar\nu$) topology of the factors $\ha\calH^{\nu,\nu'}$.\par
Following the lines of~\cite[4.8, 4.9]{vasserot05}, we prove the following result in this section, which is the key instrument in the proof of the classification theorem~\autoref{theo:classification}:
\begin{theo}\label{theo:density}
	The map $\Phi:\bfH_{\xi} \to \ha\calH$ defined in~\autoref{theo:Phi} is injective with dense image.
\end{theo}

\par
Recall that for each ideal $\calI\subset \calW_I\backslash \calW/\calW_I$ and each $\nu,\nu'\in \Xi$, we have defined in~\autoref{subsec:filtration} a subspace $\calH^{\nu,\nu'}_{\calI}\subset\calH^{\nu,\nu'}$. Define the $\frako$-adic completion:
\[
	\ha\calH^{\nu,\nu'}_{\calI} = \calH^{\nu,\nu'}_{\calI}\otimes_{\rmH^*_{\Gztq}}\rmH^*_{\Gztq, 0},\quad \ha\calH_{\calI} = \prod_{\nu'\in \ubar\Xi}\bigoplus_{\nu\in \ubar \Xi} \ha\calH^{\nu,\nu'}_{\calI},
\]
so that $\ha\calH_{\calI}\subset \ha\calH$ is a closed subspace. The parabolic version $\ha\calH^{\nu,\nu'}_{\sigma, \calI}$ and $\ha\calH_{\sigma, \calI}$ are defined in the same way.

\subsection{Preparatory lemmas}\label{subsec:prepa}
We first examine the restriction of $\Phi:\bfH_{\xi}\to \ha\calH$ to the polynomial subalgebra $\bfS_{\xi}$. \par

Recall the point $\bfx^M\in \bbE^M$ is defined in~\autoref{subsec:conjxi} as the image of $\bfx$ under the orthogonal projection $\bbA\to \bbE^M$. For each $\nu\in \Xi$, let $\bfx_{\nu}\in \bbE_{\xi}$\index{x@$\bfx_{\nu}$} be the image of $\bfx^M$ under the canonical isomorphism $(\bbE^M_{\diamond}, \Delta^{\nu}_{\xi})\cong (\bbE_{\xi,\diamond}, \Delta_{\xi})$ from~\autoref{subsec:canrelweyl}. Let $(\bfS_{\xi})_{(\bfx_{\nu}, \eta/2m)}$ denote the completion of $\bfS_{\xi} = \bfk[\bbE_{\xi,\diamond}]\otimes \bfk[u]$ at the point $(\bfx_{\nu},\eta/2m)\in \bbE_{\diamond}\times \bfQ$.

\begin{lemm}\label{lemm:vp}
	For each $\nu\in \Xi$. The map $\Phi_{\nu}:\bfS_{\xi}\to \ha\calH_{\nu}$ factors through the completion $\bfS_{\xi}\hookrightarrow(\bfS_{\xi})_{(\bfx_{\nu},\eta/2m)}$ and induces an isomorphism of topological rings $(\bfS_{\xi})_{(\bfx_{\nu}, \eta/2m)}\cong \ha\calH_{\nu}$.
\end{lemm}
\begin{proof}
	By the definition of $\Phi_{\nu}$ in~\autoref{subsec:affinehecke} in the case $J = \emptyset$, we should examine the composition
	\begin{equation}\begin{aligned}\label{equa:Phinu}
		\bfS_{\xi}\xrightarrow{\cong}\Ext^*_{\Mtq}(\scrC, \scrC)\hookrightarrow  \Ext^*_{\Mtq}(\scrC, \scrC)_{(\bfx, \eta/2m)}.
	\end{aligned}\end{equation}
	Let $\phi = (e, h, f)$ be an $\bfZ$-graded $\fraksl_2$-triple in $\frakm_*$ with $e\in \rmO_{\eta}$.  The arguments in the proof of \autoref{lemm:SE} yields
	\begin{equation}\begin{aligned}\label{equa:CE}
		\Ext^*_{\Mtq}(\scrC, \scrC)\cong \rmH^*_{\Mtq}(\rmO, \bfk)\cong \rmH^*_{Z^q_{\Mt}(e)}\cong \rmH^*_{Z_{\Mt}(e)}\otimes \bfk[u] \cong \bfk[\bbE^M_{\diamond}]\otimes \bfk[u].
	\end{aligned}\end{equation}
	Let $\varphi = \exp(h)\in \bfX_*(M_0)$. The second-to-last isomorphism in~\eqref{equa:CE} is induced by the following group isomorphism from~\eqref{equa:Mphi}:
	\[
		\iota: Z_{\Mt}\times \bfC^{\times}\xrightarrow{\cong} Z^q_{\Mt}(\phi) ,\quad  (g, q)\mapsto (g \varphi(q), q)
	\]
	and under this isomorphism, the cocharacter $(\bfx, \eta / 2m)\in \bfX_*(Z^q_{\Mt}(\phi))_{\bfQ}$ is sent to
	\[
		\iota^*(\bfx, \eta / 2m) = (\bfx - (\eta/2m)\varphi, \eta/2m)\in \bfX_*(Z_{\Mt})_{\bfQ}\oplus \bfQ.
	\]
	We see that $\bfx - (\eta/2m)\varphi = \bfx^M$ because $(\eta/2m)\varphi$ is orthogonal to $\bbE^{M}_{\diamond}$ --- indeed, it is because $\bfX_*(\Tt\cap [M_{\diamond}, M_{\diamond}])_{\bfQ}$ and $\bbE^M_{\diamond} = \bfX_*(Z_{\Mt})_{\bfQ}$ are orthogonal with respect to the Killing form (by the $W_{\Mt}$-invariance) and because $\varphi$ lies in $\bfX_*(\Tt\cap [M_{\diamond}, M_{\diamond}])$. Hence the isomorphisms in~\eqref{equa:CE} induce 
	\[
		\Ext^*_{\Mtq}(\scrC, \scrC)_{(\bfx, \eta/2m)}\cong (\bfk[\bbE^M_{\diamond}]\otimes \bfk[u])_{(\bfx^M, \eta / 2m)}. 
	\]

	On the other hand, it follows from the construction of polynomial action on graded AHAs of Lusztig~\cite[\S 4]{lusztig88} that the first isomorphism in~\eqref{equa:Phinu} is given by the composition of~\eqref{equa:CE} with the isomorphism $\bfk[\bbE^M_{\diamond}]\otimes \bfk[u]\cong \bfk[\bbE_{\xi,\diamond}]\otimes \bfk[u]$ induced by the canonical isomorphism $(\bbE_{\xi,\diamond}, \Delta_{\xi})\cong(\bbE^M_{\diamond}, \Delta^{\nu}_{\xi})$. Therefore~\eqref{equa:Phinu} induces 
	\[
		(\bfS_{\xi})_{(\bfx_{\nu},\eta/2m)}\cong (\bfk[\bbE^M_\diamond]\otimes\bfk[u])_{(\bfx^M, \eta/2m)}\cong  \Ext^*_{\Mtq}(\scrC, \scrC)_{(\bfx, \eta/2m)} = \ha\calH_{\nu},
	\]
	which concludes the proof.
\end{proof}
\begin{lemm}\label{prop:dense}
	The restriction $\Phi\mid_{\bfS_{\xi}}$ is injective and its image is a dense subring of $\ha\calH_e = \prod_{\ubar\nu\in \ubar\Xi}\ha\calH^{\nu,\nu}_e\subset \ha\calH$.
\end{lemm}
\begin{proof}
	By~\autoref{theo:Phi}, the restriction $\Phi\mid_{\bfS_{\xi}}$ coincides with the composition
	\[
		\bfS_{\xi}\xrightarrow{\Phi_{\emptyset}}\prod_{\ubar\nu\in \ubar\Xi}\Ext^*_{\Mztq}(\scrC_{\eta}, \scrC_{\eta})_{0}\xrightarrow{\prod_{\ubar\nu\in \ubar\Xi}\psi_{\nu}}\prod_{\ubar\nu\in \ubar\Xi}\Ext^*_{\Gztq}(\Ind_{\nu}\scrC_{\eta}, \Ind_{\nu}\scrC_{\eta})_{0}\subset \ha\calH,
	\]
	where $\psi_{\nu}$ as defined in~\eqref{equa:indsigma} by the functoriality of $\Ind_{\nu}$. Applying~\autoref{prop:imagepsi} to the case $\calI = \{[e]\}$, we obtain an isomorphism
	\[
		\prod_{\ubar\nu\in \ubar\Xi}\psi_{\nu}  : \prod_{\nu\in \ubar\Xi}\Ext^*_{\Mztq}(\scrC_{\eta}, \scrC_{\eta})_{0} \cong \ha\calH_e.
	\]
	Thus it suffices to show that $\Phi_{\emptyset}$ is injective with dense image. \par
	By~\autoref{lemm:vp}, the map $\Phi_{\nu}$ can be factorised as
	\[
		\bfS_{\xi}\hookrightarrow (\bfS_{\xi})_{(\bfx_{\nu},\eta/2m)}\xrightarrow{\cong} \ha\calH_{\nu}.
	\]  
	Therefore the map $\Phi_{\nu}$ has dense image for each $\nu\in \Xi$. \par
	For every pair $\nu,\nu'\in \Xi$, the quality $\bfx_{\nu} = \bfx_{\nu'}$ holds if and only if $\nu$ and $\nu'$ are $W_{\xi,\bfx}$-conjugate, which means that $\ubar\nu = \ubar \nu'\in \ubar\Xi$. Therefore for any finite subset $\Sigma\subset \ubar\Xi$, the map
	\[
		(\Phi_{\nu})_{\ubar \nu\in \Sigma}:\bfS_{\xi} \to \prod_{\ubar\nu\in \Sigma}(\bfS_{\xi})_{(\bfx_{\nu},\eta/2m)}
	\]
	is injective and its image is dense by the Chinese remainder theorem. Hence, by the definition of product topology, the image of $\Phi_{\emptyset} = (\Phi_{\nu})_{\ubar\nu\in \ubar\Xi}$ is dense.
\end{proof}

\begin{lemm}\label{prop:generator}
	Let $w\in \calW_{\xi}$. The following statements hold:
	\begin{enumerate}
		\item\label{prop:generator-i}
			The element $\Phi(w)$ lies in the subspace $\ha\calH_{\le w}$.
		\item\label{prop:generator-ii}
			The image of $\Phi(w)$ in the quotient $\ha\calH_{w} \cong \ha\calH_{\le w} / \ha\calH_{< w}$ generates the latter as free left (resp. right) $\ha\calH_{e}$-module. \par
	\end{enumerate}
\end{lemm}
\begin{proof}
	Notice that $\calH^{\nu,\nu'}_w$ is a free left $\calH^{\nu,\nu}_e$-module of rank one by~\autoref{lemm:isostr}~\ref{lemm:isostr-iv} and~\autoref{prop:SS-bimod}~\ref{prop:SS-bimod-iii}, and so is $\ha\calH_w$ a free left $\ha\calH_e$-module of rank one. We prove the assertions by induction on $\ell_{\xi}(w)$, where $\ell_{\xi}:\calW_{\xi}\to \bfN$ is the length function of $(\calW_{\xi}, \Delta)$, see~\autoref{prop:relW}. \par
	When $\ell_{\xi}(w) = 0$, the assertions are trivial. When $\ell_{\xi}(w) = 1$, we have $w = s_{\alpha}\in \calW_{\xi}$ for some $\alpha\in \Delta_{\xi}$. Let $J = \{ \alpha\}\subset \Delta_{\xi}$ and denote $s = s_{\alpha}$. Let $\sigma\in \Xi_J$. The isomorphism of~\cite{EM} and~\cite{lusztig95} reads:
	\[
		\bfH_{\xi,J} \cong \Ext^*_{L^{\sigma}_{\diamond,q}}\left(\Ind^{\frakl^{\sigma}_{\diamond}}_{\frakm_{\diamond}}\scrC, \Ind^{\frakl^{\sigma}_{\diamond}}_{\frakm_{\diamond}}\scrC \right).
	\]
	The map $\Phi_{\sigma}:\bfH_{\xi, J}\to \ha\calH_{\sigma}$ (\autoref{subsec:affinehecke}), obtained from this isomorphism by passing to completion at $(\bfx,\eta/2m)$, is injective and has dense image. By the decomposition $\bfH_{\xi,J} = \bfS_{\xi}\oplus \bfS_{\xi}s$ as left (resp. right) $\bfS_{\xi}$-module and the fact that $\Phi_{\sigma}(\bfS_{\xi})\subset \ha\calH_{\sigma,e}$, we have
	\[
		\Phi_{\sigma}(\bfH_{\xi,J}) = \Phi_{\sigma}(\bfS_{\xi}\oplus \bfS_{\xi}s) = \Phi_{\sigma}(\bfS_{\xi})+ \Phi_{\sigma}(\bfS_{\xi})\Phi_{\sigma}(s)\subseteq \ha\calH_{\sigma,e} + \ha\calH_{\sigma,e}\Phi_{\sigma}(s).
	\]
	The short exact sequence
	\[
		0\to \ha\calH_{\sigma,e}\to \ha\calH_{\sigma}\xrightarrow{p} \ha\calH_{\sigma, s}\to 0
	\]
	shows that $p(\Phi_{\sigma}(s))\in \ha\calH_{\sigma,s}$ must be a $\ha\calH_{\sigma,e}$-module generator, since otherwise the image $\Phi_{\sigma}(\bfH_{\xi,J})$ would not be dense in $\ha\calH_{\sigma}$. Taking product over $\ubar\sigma\in \ubar\Xi_{J}$, we see that the image of $\Phi_J(s)$ generates $\prod_{\ubar\sigma}\ha\calH_{\sigma, s}$ over $\prod_{\ubar\sigma}\ha\calH_{\sigma,e}$. \autoref{prop:imagepsi} implies that the image of $\psi_J:\ha\calH_J\to \ha\calH$ is equal to $\ha\calH_{\le s}$. We see that $\Phi(s) = (\psi_J\circ\Phi_J)(s)$ lies in $\ha\calH_{\le s}$ and generates the quotient $\ha\calH_{s}$. \par

	Suppose now that $\ell_{\xi}(w) \ge 2$. We choose $s\in \calW_{\xi}$ such that $\ell_{\xi}(s) = 1$ and $\ell_{\xi}(ws) = \ell_{\xi}(w) - 1$. By induction hypothesis, the element $\Phi(ws)$ (resp. $\Phi(s)$) lies in $\ha\calH_{\le ws}$ (resp. $\ha\calH_{\le s}$). By~\autoref{prop:Hquot} and the fact that $\ell_{\xi}(ws) + \ell_{\xi}(s) = \ell_{\xi}(w)$, the product $\Phi(w) = \Phi(ws)\Phi(s)$ lies in~$\ha\calH_{\le w}$. Since the image of $\Phi(ws)$ in $\ha\calH_{ws}$ and the image of $\Phi(s)$ in $\ha\calH_{s}$ are free generators by induction hypothesis, the surjectivity~\autorefitem{lemm:isostr}{i} implies that $\Phi(w) = \Phi(ws)\Phi(s)$ is in turn a generator of $\ha\calH_{w}$ as free left (resp. right) $\ha\calH_e$-module. 
\end{proof}

\subsection{Proof of~\autoref{theo:density}}

\begin{proof}
	For each finite ideal $\calI\subset \calW_{I}\backslash \calW / \calW_I$, we put 
	\[
		\bfH_{\xi,\calI} =  \bigoplus_{w\in \calW_{\xi}\cap \calI}\bfS_{\xi} w \subset \bfH_{\xi}.
	\]
	We show that 
	\begin{enumerate}
		\item 
			$\Phi$ restricts to an injective map $\Phi\mid_{\bfH_{\xi, \calI}}: \bfH_{\xi, \calI}\hookrightarrow \ha\calH_{\calI}$ and
		\item 
			the subspace $\Phi(\bfH_{\xi, \calI})$ is dense in $\ha\calH_{\calI}$. 
	\end{enumerate}
	We proceed by induction on $\#\calI$. When $\calI = \emptyset$, it is trivial. Suppose that $\#\calI \ge 1$. Let $\calJ\subset \calI$ be a sub-ideal such that $\#\left( \calI \setminus\calJ \right) = 1$ and write $\calI \setminus\calJ = \left\{ [w] \right\}$ for some $w\in \calW_\xi$. If $[w]$ is bad (i.e. $[w]\notin \calW_{\xi}$), then $\bfH_{\xi,\calI} = \bfH_{\xi, \calJ}$ and~\autoref{prop:SS-bimod}~\ref{prop:SS-bimod-ii} implies that $\ha\calH_w = 0$, so $\ha\calH_{\calI} = \ha\calH_{\calJ}$ and the claim follows from induction hypothesis. \par
	Suppose therefore that $[w]\in \calW_{\xi}$. We let $w = \min([w])$ be the minimal representative of $[w]$ in $\calW$ (see~\autoref{prop:ordre}). By~\autoref{prop:generator}~\ref{prop:generator-i}, the image $\Phi(\bfH_{\xi, \calI})$ lies in $\ha\calH_{\xi,\calI}$. By induction hypothesis, $\Phi(\bfH_{\xi,\calJ})$ is dense in $\ha\calH_{\calJ}$. Consider following diagram of short exact sequences:
	\begin{equation*}\begin{aligned}
		\begin{tikzcd}
			0 \arrow{r} & \bfH_{\xi,\calJ} \arrow{r}\arrow{d}{\Phi\mid_{\bfH_{\xi,\calJ}}} &\bfH_{\xi,\calI} \arrow{r}\arrow{d}{\Phi\mid_{\bfH_{\xi,\calI}}} & \bfS_{\xi}w\arrow{r} \arrow{d}& 0 \\
			0 \arrow{r} & \ha\calH_{\calJ}\arrow{r} & \ha\calH_{\calI}\arrow{r} & \ha\calH_{w} \arrow{r} & 0
		\end{tikzcd}.
	\end{aligned}\end{equation*}
	The image of the right vertical arrow, denoted by $V$, is a $\Phi(\bfS_{\xi})$-submodule. By~\autoref{prop:dense}, the image $\Phi(\bfS_{\xi})\subseteq \ha\calH_{e}$ is dense, so the closure $\ba V$ is a $\ha\calH_{e}$-submodule of $\ha\calH_{w}$. By~\autoref{prop:generator}~\ref{prop:generator-ii}, we have $\ba V = \ha\calH_{w}$ and the right vertical arrow is injective. Since the left arrow is also injective with dense image by induction hypothesis, so is the middle arrow. The claim is proven. \par

	The injectivity of $\Phi$ follows immediately from the claim because $\bfH_{\xi} = \bigcup_{\calI}\bfH_{\xi, \calI}$. As the union $\bigcup_{\calI}\ha\calH_{\calI}$ is dense in $\ha\calH$, so is the image of $\Phi$. This completes the proof.
\end{proof}

\section{Simple and proper standard modules}\label{sec:simple}

\subsection{Specialisation \texorpdfstring{$\delta=1$}{δ=1}}

Recall the linear function $\delta\in \bbE^{*}_{\xi,\diamond}$ (\autoref{subsec:canrelweyl}) is such that $\delta^{-1}(1) = \bbE_{\xi}$. Notice that $\delta$ is central in $\bfH_{\xi}$. Set $\bfH'_{\xi} = \bfH_{\xi} / (\delta-1)$. There is a vector space decomposition : 
\[
	\Hxid =\bfk\calW_{\xi} \otimes\bfk[\bbE_{\xi}]\otimes \bfk[u]. 
\]\index{H@$\Hxid$}
\par

On the other hand, recall the extension algebra $\ha\calH$ defined in~\autoref{subsec:indcusp}. We can get rid of the redundant $\Ctm$-equivariance: let $\gamma = (2\lambda_0, 2m, \eta)\in \bfX_*(T\times \Ct\times \Cq)$. Then $\gamma$ acts trivially on $\frakg_{\ubar \eta}$ and $\Gz$ according to the definition of $\Ct$-action (\autoref{subsec:extratori}). Since the complexes $\bfI^{\nu}$ are semisimple, making use of the isomorphism $\Gzq\times \bfC^{\times}_{\gamma}\cong \Gztq$, we have
\begin{equation*}\begin{aligned}
	\Ext^*_{\Gztq}(\bfI^{\nu'},\bfI^{\nu})\cong \Ext^*_{\Gzq}(\bfI^{\nu'},\bfI^{\nu})\otimes \rmH^*_{\bfC^{\times}_{\gamma}}.
\end{aligned}\end{equation*}
Set 
\[
	\ha\calH' = \ha\calH / \ha\calH \cdot \rmH^{>0}_{\bfC^{\times}_{\gamma}} \cong \prod_{\ubar\nu'\in \ubar \Xi}\bigoplus_{\ubar\nu\in \ubar \Xi}\Ext^*_{\Gzq}(\bfI^{\nu'}, \bfI^{\nu})_0.
\]\index{H@$\ha\calH'$}
We define similarly $\ha\calH'^{\nu,\nu'}$ and $\ha\calH'^{\nu,\nu'}_w$ by forgetting the $\Ctm$-equivariance.  \par
Arguing as in the proof of~\autoref{lemm:vp}, we can show that the image $\Phi(\delta)\in \ha\calH_e$ generates the same ideal as $\rmH^{>0}_{\bfC^{\times}_\gamma}$ does and therefore the map $\Phi:\bfH_{\xi}\to \ha\calH$ descends to a map $\Phi':\Hxid\to \ha\calH'$.\index{Ph@$\Phi'$} We obtain the following corollary of~\autoref{theo:density}:
\begin{coro}\label{coro:Phibarre}
	The map $\Phi'$ is injective with dense image.\hfill\qedsymbol
\end{coro}

\begin{rema}
	The case $\delta = 0$ cannot be treated with our approach. In the case where $\delta \neq 0$, the fraction $\eta/2m$, called \emph{slope}, has capital importance in the behaviour of the block $\calO_{x, \eta/2m}(\Hxid)$ defined below.  In the case $\delta\neq 0$, it is conventional to set $\delta=1$. 
\end{rema}

\subsection{The category \texorpdfstring{$\calO(\Hxid)$}{O(H)}}\label{subsec:OH}
We refer to~\cite[2.2]{VV09} for an exposition of the category $\calO$ of the dDAHA $\Hxid$. \par

Let $\bfS'_{\xi} = \bfk[\bbE_{\xi}]\otimes \bfk[u]$ be the polynomial part of $\Hxid$. For each point $x\in\bbE_{\xi,\bfk} = \bbE_{\xi}\otimes_{\bfQ} \bfk$ and each $r\in \bfk$, let $\frako_{x,r}\subset \bfS'_{\xi}$ be the maximal ideal generated by $u - r$ and $f - f(x)$ for all $f\in \bfk[\bbE_{\xi}]$. Given any module $\scrM\in \bfH'_{\xi}\Mod$, consider for each $x\in \bbE_{\xi,\bfk}$ and each $r\in \bfk$, the generalised $(x,r)$-weight space in $\scrM$:
\begin{equation*}\begin{aligned}
	\scrM_{x,r} = \bigcup_{N\ge 0}\left\{ a\in \scrM\;;\;\frako_{x,r}^N a = 0\right\}.
\end{aligned}\end{equation*}\index{M@$\scrM_{x,r}$}
The category of {\bf integrable $\Hxid$-modules} $\calO(\Hxid)\subset \Hxid\mof$ is defined to be the full subcategory of finitely generated $\Hxid$-modules $\scrM$ which satisfy the following condition:
\begin{equation*}\begin{aligned}
	\scrM = \bigoplus_{r\in \bfk}\bigoplus_{x\in\bbE_{\xi, \bfk}} \scrM_{x,r}.
\end{aligned}\end{equation*}
It is known that if $\scrM\in \calO(\Hxid)$, then $\dim_{\bfk}\scrM_{x, r} < \infty$ for each $x$ and $r$, see~\cite[2.1.5(b)]{VV09}.
\par

For any $x\in \bbE_{\xi,\bfk}$ and $r\in \bfk$, we define $\calO_{x,r}( \Hxid )\subset \calO(\Hxid)$\index{O@$\Ox$} to be the full subcategory consisting of those modules $\scrM\in\Hxid\mof$ satisfying
\begin{equation*}\begin{aligned}
	\scrM = \bigoplus_{x'\in \calW_{\xi}x} \scrM_{x,r}.
\end{aligned}\end{equation*}
In other words, the polynomial subalgebra $\bfk[\bbE_{\xi}]$ acts locally finitely on $\scrM$ with eigenvalues in the $\calW_{\xi}$-orbit of $x\in \bbE_{\xi, \bfk}$ and $u$ acts with eigenvalue $r\in \bfk$. These subcategories form the blocks of $\calO(\Hxid)$, so that we have
\begin{equation*}\begin{aligned}
	\calO(\Hxid) = \bigoplus_{\substack{r\in \bfk \\ \calW_{\xi}x \in \bbE_{\xi,\bfk} / \calW_{\xi}}}\calO_{x,r}(\Hxid).
\end{aligned}\end{equation*}

\subsection{Simple \texorpdfstring{$\ha\calH$}{H hat}-modules}\label{subsec:special}
We recall the notion of smooth modules:
\begin{defi}
	Let $A$ be a topological ring. A left $A$-module $\scrM$ is called {\bf smooth} if the action of $A$ on $\scrM$ is continuous when $\scrM$ is equipped with the discrete topology. Equivalently, $\scrM$ is smooth if for each $m\in \scrM$, the annihilator 
	\begin{equation*}\begin{aligned}
		\ann_{A}(m) = \left\{x\in A\;;\; xm = 0  \right\}
	\end{aligned}\end{equation*}
	is open in $A$.
\end{defi}

Finitely generated smooth $\cHd$-modules form a Serre subcategory of $\cHd\mof$, denoted by $\cHd\mof^{\sm}$.\index{H@$\cHd\mof^{\sm}$}\par

Recall that the Lusztig sheaf $\bfI^{\nu} = \Ind_{\nu}\scrC$ is a semisimple complex by~\autoref{prop:BBDG}~\ref{prop:BBDG-i}. Let $\bfI = \bigoplus_{\nu\in \ubar \Xi}\bfI^{\nu}$ be the (infinite) sum.  
\begin{prop}\label{lemm:simples}
	There is a canonical bijection between the set of isomorphism classes of smooth simple modules of $\cHd$ and the set of simple constituents of $\pH$ as follows: given $\scrF\in \Irr\Perv_{\Gz}(\frakg_{\ubar \eta})$, the associated simple smooth $\ha\calH'$-module is given by the multiplicity space:\index{0@$[\bfI:\scrF]$}
	\[
		[\bfI:\scrF] = \bigoplus_{\ubar\nu\in \ubar\Xi}\bigoplus_{k\in \bfZ}\Hom_{\Perv_{\Gz}(\frakg_{\ubar\eta})}(\scrF,\pH^k\bfI^{\nu}).
	\]
\end{prop}
\begin{proof}
	This is a standard property of extension algebras modulo the fact that $\bfI$ is an infinite sum. See~\cite[6.1]{vasserot05} for a proof in this case. 
\end{proof}
Notice that the perverse sheaves $\pH^k\bfI^{\nu}$ are $\Gzq$-equivariant. We regard them as $\Gz$-equivariant perverse sheaves by forgetting the $\Cq$-equivariance. The multiplicity space is not influenced by the change of equivariance because the forgetful functor
\[
	\Perv_{\Gzq}(\frakg_{\ubar\eta})\to \Perv_{\Gz}(\frakg_{\ubar\eta})
\]
is fully faithful. \par

In particular, for the sign $\varepsilon = \eta / |\eta|$, the simple constituents of $\pH \bfI = \bigoplus_{\ubar\nu\in \ubar\Xi}\bigoplus_{k\in \bfZ}\pH^k\bfI^{\nu}$ coincide with the series $\Irr\Perv_{\Gz}(\frakg^{\nil}_{\ubar \eta})_{\xi}$~(\ref{subsec:block}) because $\bfI$ is, by definition, the sum of all possible spiral inductions from $\xi$ modulo the $W_{\xi, \bfx}$-conjugation. Hence we have the following corollary:
\begin{coro}\label{lemm:HP}
	When $\varepsilon = \eta / |\eta|$. The assignment $\scrF\mapsto [\bfI:\scrF]$ yields a bijection
	\[
		\pushQED{\qed} 
		\Irr \ha\calH'\mof^{\sm} \xlongrightarrow{\sim} \Irr\Perv_{\Gz}(\frakg^{\nil}_{\ubar\eta})_{\xi}.\qedhere
		\popQED
	\]
\end{coro}

\subsection{Comparison}
Denote $\Ox = \calO_{\bfx_{\nu}, \eta/2m}(\bfH'_{\xi})$ for any choice of $\nu\in \Xi$. It is the block of the category $\calO(\Hxid)$ associated to the $\calW_{\xi}$-orbits $\{\bfx_{\nu}\}_{\ubar\nu\in \ubar\Xi}$ in $\bbE_{\xi}$, as defined in~\autoref{subsec:OH}. \par
For each $\nu\in \Xi$, let $\bfe_{\nu}\in \cHd$ be the identity element of the factor $\cHd^{\nu,\nu}$. Then the set $\{\bfe_{\nu}\}_{\ubar\nu\in \ubar\Xi}$ satisfies $\bfe_{\nu}\bfe_{\nu'} = \delta_{\ubar\nu,\ubar \nu'}\bfe_{\nu}$.  The following theorem allows us to transfer informations about smooth $\cHd$-modules to the category $\Ox$.
\begin{theo}\label{theo:modHH}
	The pull-back via the homomorphism $\Phi'$ defined in~\autoref{sec:simple} yields an equivalence of category
	\begin{equation*}\begin{aligned}
		\Phi'^*: \cHd\mof^{\text{sm}}\cong \Ox.
	\end{aligned}\end{equation*}
	Moreover, we have
	\[
		(\Phi'^*\scrM)_{\bfx_{\nu}, \eta/2m} = \bfe_{\nu}\scrM,\quad \forall\ubar\nu\in \ubar\Xi.
	\]
\end{theo}
\begin{proof}
	See also~\cite[7.6]{vasserot05}. 
	\begin{enumerate}
		\item[\ul{Step 1}.] 
			For $\scrM\in \cHd\mof^{\sm}$, we show that $\Phi'^*\scrM\in \Ox$. \par
			We first show that $\Phi'^*\scrM$ is finitely generated. Since $\scrM$ is finitely generated, there is a surjective $\cHd$-module morphism
			\[
				(\cHd)^{\oplus r}\xrightarrow{f=(f_j)_{j=1}^r} \scrM,\quad f_j:\cHd\to \scrM.
			\]
			The smoothness of $\scrM$ implies that $f$ is continuous. Consequently, \autoref{coro:Phibarre} implies that the map 
			\[
				(f_j\circ\Phi')_{j=1}^r:(\bfH'_{\xi})^{\oplus r}\to \scrM
			\]
			has dense image, which implies that it is surjective because $\scrM$ is discrete. Therefore $\Phi'^*\scrM$ is finitely generated. \par
			Next, we show that the polynomial part $\bfS'_{\xi} = \bfk[\bbE_{\xi}]\otimes \bfk[u]$ acts with eigenvalues in the set $\left\{ (\bfx_{\nu}, \eta/2m) \right\}_{\ubar\nu\in \ubar\Xi}$. The smoothness of $\scrM$ implies that
			\[
				\scrM = \bigoplus_{\ubar\nu\in \ubar\Xi}\bfe_{\nu}\scrM.
			\]
			By~\autoref{lemm:vp}, it follows that via $\Phi_\nu$, the action of $\bfS'_{\xi}$ on $\bfe_{\nu}\scrM$ factorises through the completion $(\bfS'_{\xi})_{(\bfx_{\nu},\eta/2m)}$. By the smoothness of $\scrM$, this action must factorise through a quotient of $(\bfS'_{\xi})_{(\bfx_{\nu},\eta/2m)}$ of finite length. Hence $\bfS'_{\xi}$ acts locally finitely on $\bfe_{\nu}\scrM$ with eigenvalue $(\bfx_{\nu}, \eta/2m)\in \bbE_{\xi,\bfk}\times \bfk$. It follows that $\Phi'^*\scrM\in \Ox$.
		\item[\ul{Step 2}.]
			We show that $\Phi'^*$ is essentially surjective. \par
			Observe first that by~\autoref{prop:generator}, there is a decomposition for each $\nu,\nu'\in \Xi$
			\begin{equation*}\begin{aligned}
				\cHd^{\nu,\nu'} = \bigoplus_{\substack{w\in \calW_{\xi} \\ \ubar\nu = w\ubar\nu'}}\cHd^{\nu,\nu}_e \bfe_{\nu}\Phi(w)\bfe_{\nu'}.
			\end{aligned}\end{equation*}
			 Let $\scrN\in \Ox$. There is a decomposition into generalised weight spaces of $\bfS'_{\xi}$:
			\[
				\scrN = \bigoplus_{\ubar\nu\in \ubar\Xi} \scrN_{\bfx_\nu, \eta/2m},\quad \dim_{\bfk} \scrN_{\bfx_\nu, \eta/2m} <\infty\quad \forall \ubar\nu\in \ubar\Xi.
			\]
			Denote $\scrN_{\nu} = \scrN_{\bfx_\nu, \eta/2m}$. For each $\ubar\nu\in \ubar\Xi$, the $\bfS'_{\xi}$-action on $\scrN_\nu$ extends to an $(\bfS'_{\xi})_{(\bfx_\nu,\eta/2m)}$-action. Let $\Phi^{-1}_{\nu}:\cHd_{\nu} \cong (\bfS'_{\xi})_{(\bfx_\nu,\eta/2m)}$ denote the inverse of the isomorphism of~\autoref{lemm:vp} and let $\psi_{\nu}^{-1}: \cHd^{\nu,\nu}\cong \cHd_{\nu}$ be the inverse of the isomorphism of~\autoref{prop:imagepsi} in the case $\sigma = \nu$. For each $\ubar\nu,\ubar\nu'\in \ubar\Xi$ and $w\in \calW_{\xi}$ such that $\ubar\nu = w\ubar \nu'$, we define a map
			\begin{equation*}\begin{aligned}
				\cHd^{\nu,\nu}_e \bfe_{\nu}\Phi(w)\bfe_{\nu'}\times \scrN_{\nu'}&\to \scrN_{\nu} \\
				(f\bfe_{\nu}\Phi(w) \bfe_{\nu'}, m) &\mapsto (\Phi^{-1}_{\nu}\psi_{\nu}^{-1}f)(wm)_{\nu},\quad f\in \cHd^{\nu,\nu}_e,\quad m\in \scrN_{\nu'},
			\end{aligned}\end{equation*}
			where $(wm)_{\nu}$ is the projection of $wm\in \scrN$ onto the generalised weight space $\scrN_{\nu}$. Taking summation over $w$, we obtain a $\bfk$-bilinear map $\cHd^{\nu,\nu'}\times \scrN_{\nu'}\to \scrN_{\nu}$, which turns into a $\bfk$-linear map $\cHd^{\nu,\nu'}\to \Hom_{\bfk}(\scrN_{\nu'}, \scrN_{\nu})$ by adjunction. Taking sum over $\nu$ and product over $\nu'$, we get a linear map
			\[
				\alpha_{\scrN}: \cHd\to \prod_{\ubar\nu'\in \ubar\Xi}\bigoplus_{\ubar\nu\in \ubar\Xi}\Hom_{\bfk}\left( \scrN_{\nu'}, \scrN_{\nu} \right).
			\]
			By the finite dimensionality of $\Hom_{\bfk}\left( \scrN_{\nu'}, \scrN_{\nu} \right)$, the map $\alpha_{\scrN}$ is continuous when the right-hand side is equipped with the product with respect to $\ubar\nu'$ of discrete topology. It is easy to see that the composition
			\[
				\alpha_{\scrN}\circ\Phi':\Hxid\to \prod_{\nu'\in \ubar\Xi}\bigoplus_{\nu\in \ubar\Xi}\Hom_{\bfk}\left( \scrN_{\nu'}, \scrN_{\nu} \right)
			\]
			recovers the $\Hxid$-module structure on $\scrN$. In particular $\alpha_{\scrN}\circ\Phi$ is a ring homomorphism. By continuity and the density of the image of $\Phi'$ (\autoref{coro:Phibarre}), the map $\alpha_{\scrN}$ is also a ring homomorphism and defines a $\cHd$-module structure on $\scrN$, which is obviously smooth. This proves the essential surjectivity of $\Phi'^*$.
		\item[\ul{Step 3}.]
			Finally, the injectivity and the density of the image of $\Phi'$ (\autoref{coro:Phibarre}) implies immediately that $\Phi'^*$ is fully faithful. This completes the proof.\qedhere
	\end{enumerate}
\end{proof}

\subsection{Geometric parametrisation of simple modules}
In the rest of~\autoref{sec:simple}, we suppose that the sign $\varepsilon$ is given by $\varepsilon = \eta / |\eta|$, so that~$\bfI^{\nu}$ is supported in the nilpotent cone $\frakg^{\nil}_{\ubar \eta}$ by~\autoref{prop:BBDG}~\ref{prop:BBDG-i}. \par
For $z\in \frakg^{\nil}_{\ubar\eta}$, put $G_{\ubar 0, z} = \Stab_{\Gz}(z)$. 
\begin{theo}\label{theo:classification}
	\begin{enumerate}[label=(\roman*)]
		\item\label{theo:class-ii}
			For any $z\in \frakg_{\ubar \eta}^{\nil}$ and $\chi\in \Irr\Rep\left( \pi_0\left( G_{\ubar 0,z} \right)\right)$\index{z@$(z, \chi)$}, the $\bfH'_{\xi}$-module
			\begin{equation*}\begin{aligned}
				\bfL_{z, \chi} = \Phi'^*\left[\bfI : \mathrm{IC}\left(\chi\right)\right]
			\end{aligned}\end{equation*}
			\index{L@$\bfL_{z, \chi}$}
			is simple if it is non-zero. This happens precisely when $\IC(\chi)\in \Perv_{\Gz}( \frakg^{\nil}_{\ubar\eta})_{\xi}$.
		\item\label{theo:class-iii}
			The simple objects in $\Ox$ are given by $\{\bfL_{z, \chi}\}_{(z, \chi)}$, where $(z, \chi)$ runs over the $G_{\ubar 0}$-conjugacy classes of pairs, where $z\in \frakg_{\ubar \eta}^{\nil}$ and $\chi$ is a irreducible $\Gz$-equivariant local system on the $G_{\ubar 0}$-orbit of $z$ such that $\IC(\chi)\in \Perv_{\Gz}(\frakg^{\nil}_{\ubar \eta})_{\xi}$.
		\item\label{theo:class-iv}
			For each parameter $(z, \chi)$ as above and each $\nu\in \ubar\Xi$, the generalised $(\bfx_{\nu},\eta/2m)$-weight space of $\bfL_{z, \chi}$ is given by:
			\[
				\left( \bfL_{z, \chi} \right)_{\bfx_{\nu}, \eta/2m} = \left[ \bfI^{\nu}: \IC(\chi)\right] = \bigoplus_{k\in \bfZ}\Hom_{\Perv_{\Gz}(\frakg_{\ubar\eta})}(\IC(\chi),\pH^k\bfI^{\nu}).
			\]
	\end{enumerate}
\end{theo}
\begin{rema}
	\begin{enumerate}[label=(\roman*)]
		\item
			The assertion \ref{theo:class-ii} confirms the multiplicity-one conjecture in~\cite{LYIII}.
		\item
			We have supposed that the points $\bfx_{\nu}$ are rational in $\bbE_{\xi}$. However, the non-rational case can be easily reduced to the rational case.
		\item
			The hypothesis that the grading on $\frakg$ is inner, made in~\autoref{subsec:grad}, can be removed, see~\autoref{sec:twisted}.
	\end{enumerate}
\end{rema}
\begin{proof}
	The assertion~\ref{theo:class-ii} follows from~\autoref{theo:modHH} together with~\autoref{lemm:HP}.\par
	Now let $\bfL\in\Ox$ be a simple object. Using (i), $\bfL$ can be equipped with a smooth $\cHd$-module structure, which is simple. By~\autoref{lemm:simples}, it must be isomorphic to the multiplicity space of some simple constituent of $\bfI$, thus one of the $\bfL_{z, \chi}$'s. This proves~\ref{theo:class-iii}.\par
	The assertion~\ref{theo:class-iv} follows from the second statement of~\autoref{theo:modHH}.
\end{proof}

\subsection{Proper standard modules}\label{subsec:standard}
We keep the assumption that $\varepsilon = \eta / |\eta|$. Let $z\in \frakg_{\ubar \eta}^{\nil}$ be a nilpotent element. For each $\nu\in \Xi$, let $\calT^{\nu}_z$ be the fibre of $\alpha^{\nu}: \calT^{\nu}\to \frakg_{\ubar \eta}$ (\autoref{subsec:steinberg}) at $z$ and let $i_z: \calT^{\nu}_z \to \calT^{\nu}$ be the closed inclusion. Consider the following {\it cohomology of Springer fibres}:
\begin{equation*}\begin{aligned}
	\ba\Delta_{z} &= \bigoplus_{\ubar \nu\in \ubar\Xi}\rmH^*\left( \calT^{\nu}_z, i_z^!\dot\scrC_{\nu} \right).\\
\end{aligned}\end{equation*}
Notice that the $\bfk$-vector space $\rmH^*\left( \calT^{\nu}_z, i_z^!\dot\scrC_{\nu} \right)$ is finite-dimensional for each $\ubar\nu\in \ubar\Xi$. By the formalism of convolution algebras, for each $\nu, \nu'\in \Xi$ there is a natural map
\begin{equation*}\begin{aligned}
	\cHd^{\nu, \nu'}\to  \Hom\left(\rmH^*\left( \calT^{\nu'}_z, i^!_z\dot\scrC_{\nu'} \right),\rmH^*\left( \calT^{\nu}_z, i^!_z\dot\scrC_\nu \right)\right).
\end{aligned}\end{equation*}
Taking summation over $\ubar\nu\in \ubar\Xi$ and product over $\ubar\nu'\in \ubar\Xi$, we obtain a smooth $\cHd$-module structure on $\ba\Delta_{z}$.
Besides, there is a natural $\pi_0\left( G_{\ubar 0,z} \right)$-action on $\ba\Delta_{z}$, which commutes with the $\cHd$-action. For any $\chi\in \Irr\Rep\left( \pi_0\left( G_{\ubar 0,z} \right) \right)$, we define
\begin{equation*}\begin{aligned}
	\ba\Delta_{z, \chi} &= \Hom_{\pi_0\left( G_{\ubar 0, z} \right)}\left(\chi, \ba\Delta_{z}\right)\
\end{aligned}\end{equation*}
\index{D@$\ba\Delta_{z, \chi}$}
to be the $\chi$-isotypic component, which is a $\cHd$-submodule of $\ba\Delta_z$. We view $\ba\Delta_z$ as a $\Hxid$-module via $\Phi':\Hxid\to \cHd$.
We call $\ba\Delta_{z, \chi}$ the {\bf proper standard module} of $\bfH_\xi$ with parameter $(z, \chi)$. 
\begin{theo}\label{theo:standard}
	For each pair $(z, \chi)$ as above, the following holds.
	\begin{enumerate}[label=(\roman*)]
		\item\label{theo:std-i}
			$\ba\Delta_{z, \chi}\in \Ox$.
		\item\label{theo:std-ii}
			$\ba\Delta_{z, \chi}\neq 0$ if and only if $\bfL_{z, \chi}\neq 0$. 
		\item\label{theo:std-iii}
			For any pair $(z',\chi')$ as above, the Jordan--H\"older multiplicity of $\bfL_{z', \chi'}$ in $\ba\Delta_{z, \chi}$ is given by
			\begin{equation*}\begin{aligned}
				\left[\ba\Delta_{z, \chi}: \bfL_{z', \chi'} \right] = \sum_k\dim \Hom_{\pi_0\left( G_{\ubar 0, z} \right)}\left(\chi,\rmH^k\left( z^!\mathrm{IC}(\chi') \right)\right).
			\end{aligned}\end{equation*}
	\end{enumerate}
\end{theo}
\begin{proof}
	The assertion~\ref{theo:std-i} results from the smoothness of the $\cHd$-action and~\autoref{theo:modHH}. \par
	The assertion~\ref{theo:std-ii} can be proven with the same arguments as~\cite[8.17]{lusztig95b}, using the orthogonal decomposition~\eqref{equa:dcpGztq} of the equivariant category of $\frakg^{\nil}_{\ubar \eta}$ (\autoref{subsec:block}).
	The assertion~\ref{theo:std-iii} is standard, see~\cite[8.6.23]{CG}.
\end{proof}
\begin{rema}
	The term of \og proper standard modules\fg comes from the theory of quasi-hereditary categories and its generalisations. Retrospectively speaking, the results of~\cite{kato17} show that the graded version of the completed extension algebra $\cHd$ is \og affine properly stratified\fg in the sense of~\cite{kleshchev15}. 
\end{rema}

\section{Example: \texorpdfstring{$G=\Sp(4)$}{G=Sp(4)} untwisted}\label{sec:exem}
We calculate an example to illustrate~\autoref{theo:classification}. Retain the notations of~\autoref{sec:grad} and~\autoref{sec:affineCox}. 
\subsection{Affine root system}
Put $V = \bfC^{4}$ with standard basis $\left\{ e_1, e_2, e_3, e_4 \right\}$. Let $\omega:V\times V\to \bfC$ be the symplectic form defined by
\[
	\left(\omega(e_i, e_j)\right)^4_{i,j=1} = \begin{pmatrix}0 & 0 & 0 & 1 \\ 0 & 0 & 1 & 0 \\ 0 & -1 & 0 & 0 \\ -1 & 0 & 0 & 0\end{pmatrix}.
\]
Let $\left\{ \theta_1, \theta_2, \theta_3, \theta_4 \right\}\subset V^*$ denote the dual basis. Let $G = \Sp(V, \omega)$ and $\frakg = \fraksp(V, \omega)$. Choose the maximal torus $T\subset G$ given by diagonal matrices
\[
	T = \left\{\begin{pmatrix}a & 0 & 0 & 0 \\ 0 & b & 0 & 0 \\ 0 & 0 & b^{-1} & 0 \\ 0 & 0 & 0 & a^{-1}\end{pmatrix}\;;\; a, b\in \bfC^{\times}\right\}.
\]
Let $\frakt_{\bfQ} = \bfX_*(T)_{\bfQ}$.  Denote $\varepsilon_i = e_i\otimes \theta_i - e_{5-i}\otimes \theta_{5-i}\in \frakt_{\bfQ}$ for $i \in \left\{ 1, 2\right\}$. Then $\{\varepsilon_1, \varepsilon_2\}$ is a basis for $\frakt_{\bfQ}$. We use 1/2 times the trace pairing on $\frakt^*_{\bfQ}$ to make the identification $\frakt^*_{\bfQ}\cong \frakt_{\bfQ}$, so that 
\[
	\langle \varepsilon_i, \varepsilon_j\rangle = \delta_{i,j}. 
\]
Then the finite root system is given by
\[
	R(G, T) = \left\{ \pm(\varepsilon_1 - \varepsilon_2), \pm(\varepsilon_1 + \varepsilon_2),\pm 2\varepsilon_1, \pm 2\varepsilon_2 \right\}.
\]
The set of real affine roots is 
\[
	R_{\aff} = \left\{ \pm(\varepsilon_1 - \varepsilon_2) + n\delta, \pm(\varepsilon_1 + \varepsilon_2) + n\delta, \pm 2\varepsilon_1 + n\delta, \pm 2\varepsilon_2 + n\delta \;;\; n\in \bfZ\right\}\subset \bbA_{\diamond}^* = \bfX^*(T_{\diamond})_{\bfQ}.
\]
Choose the base $\Delta^{\kappa_0} = \left\{\alpha_0 = \delta - 2\varepsilon_1 ,\;\alpha_1 = \varepsilon_1 - \varepsilon_2,\; \alpha_2 = 2\varepsilon_2\right\}$, which corresponds to the usual fundamental alcove $\kappa_0$. \par

\subsection{Pseudo-Levi and admissible systems}

Now consider the possible {\it admissible systems} (without $\bfZ$-grading) $\xi = (M, \rmO, \scrC)$, where $M$ is a pseudo-Levi subgroup containing $T$, $\rmO\subset \frakm^{\nil}$ is a nilpotent orbit and $\scrC$ is a cuspidal local system on $\rmO$. Let $I\subset \Delta^{\kappa_0}$ denote the parabolic type of $M$. Up to conjugation by the affine Weyl group, there are four possibilities for $M$:
\begin{enumerate}
	\item
		$I = \emptyset$; in this case $M = T$,\;$\bbE^M = \bbA$,\;$R_{\xi} = R_{\aff} \cong \til{C_2}$;
	\item
	$I = \{\alpha_2\}$; in this case 
	\begin{equation*}\begin{aligned}
		M &= \begin{pmatrix}* & 0 & 0 & 0 \\ 0 & * & * & 0 \\ 0 & * & * & 0 \\ 0 & 0 & 0 & *\end{pmatrix} = \bfC^{\times} \times \Sp(\bfC e_2 \oplus \bfC e_3),\quad \bbE^M = (\bfQ\,\varepsilon_1, 1) \subset \bbA,\\
		R'_{\xi} &= \left\{ \pm \varepsilon_1 + n\delta, \pm 2\varepsilon_1  + n\delta;\;;\; n\in \bfZ\right\}, R_{\xi} = \left\{ \pm \varepsilon_1 + n\delta, \pm 2\varepsilon_1  + (2n+1)\delta\;;\; n\in \bfZ\right\}\cong \til{BC_1};
	\end{aligned}\end{equation*}
	\item $I = \{\alpha_0\}$; in this case
		\begin{equation*}\begin{aligned}
			M &= \begin{pmatrix}* & 0 & 0 & * \\ 0 & * & 0 & 0 \\ 0 & 0 & * & 0 \\ * & 0 & 0 & *\end{pmatrix} = \Sp(\bfC e_1 \oplus \bfC e_4)= \bfC^{\times},\quad \bbE^M = ( (1/2)\varepsilon_1 + \bfQ\,\varepsilon_2, 1) \subset \bbA,\\
			R'_{\xi} &= \left\{ \pm \varepsilon_2 + n\delta, \pm 2\varepsilon_2  + n\delta;\;;\; n\in \bfZ\right\}, R_{\xi} = \left\{ \pm \varepsilon_2 + n\delta, \pm 2\varepsilon_2  + (2n+1)\delta\;;\; n\in \bfZ\right\}\cong \til{BC_1};
		\end{aligned}\end{equation*}
	\item $I = \{\alpha_0, \alpha_2\}$; in this case
		\begin{equation*}\begin{aligned}
			M &= \begin{pmatrix}* & 0 & 0 & * \\ 0 & * & * & 0 \\ 0 & * & * & 0 \\ * & 0 & 0 & *\end{pmatrix} = \Sp(\bfC e_1 \oplus \bfC e_4) \times \Sp(\bfC e_2 \oplus \bfC e_3),\quad \bbE^M = \left\{ (1/2)\varepsilon_1, 1 \right\},\quad R'_{\xi} = \emptyset.
		\end{aligned}\end{equation*}
\end{enumerate}
Case (i) is the principal series case, already treated in~\cite{vasserot05}. Case (iv) is empty. Case (ii) and Case (iii) are conjugate by the {\itshape extended affine Weyl group}. Thus we consider Case (ii) only. 

\subsection{Relative root system and dDAHA}

In Case (ii), $M = \bfC^{\times}\times \Sp(\bfC e_2\oplus \bfC e_3) = \bfC^{\times}\times \SL(\bfC e_2\oplus \bfC e_3)$, there is a unique cuspidal local system $\scrC_{\chi}$ on the regular nilpotent orbit $\frakm^{\nil,\reg}$ with non-trivial central character $\chi:\mu_2\hookrightarrow \bfk^{\times}$. There is a unique $\bbE^M$-alcove $\nu_0$ contained in $\ba\kappa_0\cap \bbE^{M}$, which yields a base for $R_{\xi}$:
\[
	\Delta^{\nu_0}_{\xi} = \left\{ \alpha_0^{\xi} := \delta - 2\varepsilon_1,\;\alpha_1^{\xi} := \varepsilon_1 \right\}\subset R_{\xi}.
\]
The subspace $\bbE^M$ is equipped with the Euclidean structure restricted from $\bbA$ so that $\|\varepsilon_1\|=1$. The relative affine Weyl group $W_{\xi}$ is generated by $s_0$ (reflection w.r.t. $( (1 /2)\varepsilon_1, 1)\in \bbE^M$) and $s_1$ (reflection w.r.t. $(0, 1)\in \bbE^M$).  We have the constants $c_0 = 2$, $c_1 = 3$ for the relative simple affine roots $\alpha_0^{\xi}$ and $\alpha_1^{\xi}$, respectively.
The dDAHA $\bfH_{\xi}$ attached to the based affine root system $(\bbE^M, \Delta^{\nu_0}_{\xi})$ is the generated by the set $\{x, s_0, s_1\}$ ($x^{\alpha_1^{\xi}}= x$ and $x^{\alpha_0^{\xi}} = \delta - 2x$ in the notation of~\autoref{subsec:daha}) over the polynomial ring $\bfk[u,\delta]$ modulo the following relations
\[
	s_0^2 = s_1^2 = 1,\quad s_1x + x s_1 = 6u ,\quad  s_0x - (\delta - x) s_0 = -2u.
\]
The most interesting cases are when $u$ acts by a rational number with denominator $4$ or $8$ (there are finite-dimensional simple modules in these two cases). 
\begin{enumerate}
	\item[Case (ii-1)]
		Suppose $r = -1/8$ (other numbers with denominator $8$ are similar). Consider the following block of integrable $\bfH_{\xi}/(\delta-1)$-modules
		\[
			\calO_{\bfx^M, -1/8}(\bfH_{\xi}/(\delta-1)),\quad \bfx^M\in \bbE^M,\quad x(\bfx^M) = 3/8.
		\]
		The orbit $W_{\xi} \bfx^M$ is equal to $\left\{ x = \pm 3/8 + k \;;\; k\in \bfZ\right\}$. There are three simple objects in this block:
		\begin{equation*}\begin{aligned}
			\bfL_0 &= \bfH_{\xi} / \bfH_{\xi}\cdot\langle \delta-1,\;u+1/8,\;x - 3/8,\;s_0 +1,\;s_1 + 1 \rangle,\quad \dim \bfL_0 =1 \\
			\bfL_+ &= \bfH_{\xi} / \bfH_{\xi}\cdot\langle \delta-1,\;u+1/8,\;x - 5/8,\;s_0 -1 \rangle, \quad \dim \bfL_{+} = \infty \\
			\bfL_- &= \bfH_{\xi} / \bfH_{\xi}\cdot\langle \delta-1,\;u+1/8,\;x + 3/8,\;s_1 -1 \rangle,\quad \dim \bfL_{-} = \infty.
		\end{aligned}\end{equation*}
	\item[Case (ii-2)]
		Suppose $r = -1/4$. Consider the block 
		\[
			\calO_{\bfx^M, -1/4}(\bfH_{\xi}/(\delta-1)),\quad \bfx^M\in \bbE^M,\quad x(\bfx^M) = 3/4.
		\]
		The orbit $W_{\xi} \bfx^M$ is equal to $\left\{ x = \pm 1/4 + k \;;\; k\in \bfZ\right\}$. There are three simple objects in this block:
		\begin{equation*}\begin{aligned}
			\bfL'_0 &= \bfH_{\xi} / \bfH_{\xi}\cdot\langle \delta-1,\;u+1/4,\;x - 3/4,\;s_0 -1,\;s_1 + 1 \rangle,\quad \dim \bfL'_0 =1 \\
			\bfL'_+ &= \bfH_{\xi} / \bfH_{\xi}\cdot\langle \delta-1,\;u+1/4,\;x - 1/4,\;s_0 +1 \rangle, \quad \dim \bfL'_{+} = \infty \\
			\bfL'_- &= \bfH_{\xi} / \bfH_{\xi}\cdot\langle \delta-1,\;u+1/4,\;x + 3/4,\;s_1 -1 \rangle,\quad \dim \bfL'_{-} = \infty.
		\end{aligned}\end{equation*}
\end{enumerate}

\subsection{Perverse sheaves}
We can choose the $\bfZ/m$-grading on $\frakg$ according to the block that we are interested in. We shall only work out Case (ii-1) in detail. Case (ii-2) is interesting but it would take longer paragraphs to analyse. \par
		Consider the adjoint action of $G = \Sp(V, \omega)$ on $\frakg = \fraksp(V, \omega)$. Set $m = 8$ and $\eta = -2$ so that $\eta/2m = -1/8$ is the eigenvalue of $u$. Fix the sign $\varepsilon = \eta / |\eta| = -1$. Following~\eqref{equa:Mphi}, the cocharacter $\lambda_0\in \bfX_*(T)$ is determined by the formula $\lambda_0 / m = \bfx = \bfx^M + (\eta/2m)\varphi$, where $\varphi = \exp(h)$ and $(e, h, f)$ forms an $\fraksl_2$-triple in $\frakm$ with $e\in \rmO$, $h\in \frakt$. We set
		\[
			\varphi(t) = \begin{pmatrix}0 & 0 & 0 & 0 \\ 0 & t & 0 & 0 \\ 0 & 0 & t^{-1} & 0 \\ 0 & 0 & 0 & 0 \end{pmatrix},\quad \lambda_0 = m\bfx^M + (\eta/2)\varphi,\quad \lambda_0(t) = \begin{pmatrix}t^3 & 0 & 0 & 0 \\ 0 & t^{-1} & 0 & 0 \\ 0 & 0 & t & 0 \\ 0 & 0 & 0 & t^{-3}\end{pmatrix}.
		\]
		It follows that $G_{\ubar 0} = G^{\lambda_0} = T$ and 
		\begin{equation*}\begin{aligned}
			\frakg^{\nil}_{-\ubar 2} = \left\{ \begin{pmatrix}0 & 0 & 0 & z_2 \\ 0 & 0 & z_1 & 0 \\ z_3 & 0 & 0 & 0 \\ 0 & -z_3 & 0 & 0\end{pmatrix}\;;\; z_1,z_2,z_3\in \bfC,\quad z_1z_2z_3 = 0\right\}.
		\end{aligned}\end{equation*}
		For $z_1, z_2, z_3\in \bfC$ with $z_1z_2z_3 = 0$, we let $\rmO_{(z_1,z_2,z_3)}\subset \frakg^{\nil}_{-\ubar 2}$ denote the corresponding $T$-orbit. There are $7$ nilpotent $T$-orbits : $\{\rmO_{(a,b,c)}\}_{a,b,c\in \left\{ 0,1 \right\},abc=0}$. The centralisers of orbits are $1, \mu_2$ or $\mu_2\times \mu_2$ depending on how many among $z_1$ and $z_2$ are non-zero. We can induce the cuspidal local system $\scrC_{\chi}$ on $\rmO_{(1, 0,0)}$ to $\rmO_{(1, 1, 0)}$ and $\rmO_{(1, 0, 1)}$ by pulling back via the obvious projection $\rmO_{(1, b, c)}\to \rmO_{(1, 0, 0)}$. Denote them by $\scrL_{(1, 1, 0)}$ and $\scrL_{(1, 0, 1)}$. Then the perverse sheaves 
		\[
			\IC(\scrC_{\chi}),\;\IC(\scrL_{(1, 1, 0)}),\; \IC(\scrL_{(1, 0, 1)})
		\]
		generate the block $\Db_{\Gz}\left( \frakg^{\nil}_{-\ubar 2} \right)_{\xi}$. The correspondence~\autorefitem{theo:classification}{ii} reads:
		\[
			\scrC_{\chi}\longleftrightarrow \bfL_0,\quad\scrL_{(1,1,0)}\longleftrightarrow \bfL_+,\quad \scrL_{(1,0,1)}\longleftrightarrow \bfL_-.
		\]
		This correspondence can be deduced from the description of generalised weight spaces~\autorefitem{theo:classification}{iii} by an examination of the spirals corresponding to $\bbE^M$-alcoves by~\autoref{prop:Ppoids}.

\section{Twisted case}\label{sec:twisted}
In~\autoref{subsec:grad}, we have assumed that the grading on $\frakg$ is inner for the sake of simplicity. In that case, the affine root system $R_{\aff}$ is untwisted. We explain here the modifications needed to accommodate the constructions in~\autoref{sec:grad} and~\autoref{sec:affineCox} to the twisted case. Those parts which are not mentioned are intact. The exposition below follows closely~\cite{LYIII}. 

\subsection{Choice of maximal torus}
We retain the setting of~\autoref{sec:grad} without the assumption that the image of $\theta$ lies in the adjoint group $G^{\ad}$. Fix a pinning $E= (B_0, T_0, U_0/[U_0,U_0]\to \bfC)$ for $G$. This yields an identification of $\Out(G) = \Aut(G) / G^{\ad}$ with $\Aut_E(G)$, the group of automorphisms of $G$ fixing $E$. It also yields a splitting $\Aut(G)= G^{\ad}\rtimes \Aut_E(G)$. \par
Consider the composition 
\[
	\theta_{\Out}:\mu_m\xrightarrow{\theta} \Aut(G)\to \Out(G)
\]
and suppose that $\mu_e = \mu_m / \ker\theta_{\Out}$. Then $\theta_{\Out}$ induces a monomorphism $\varsigma:\mu_e\hookrightarrow \Out(G)$. Since $\Out(G)$ is isomorphic to the automorphism group of the Dynkin diagram of $G$, we have $e \in \left\{ 1, 2, 3 \right\}$.  \par
Let $\mu_e^*\subset \mu_e$ be the set of primitive $e$-th roots of unity. The following statements are proven in~\cite[2.2.2]{LYIII}: 
\begin{enumerate}
	\item \label{LYIII-1}
		Every element $\tau\in G^{\ad}\rtimes \varsigma(\mu^*_e)$ can be conjugated to an element in $T^{\ad}_0 \times \varsigma(\mu^*_e)$, where $T^{\ad}_0 = T_0 / Z_G$. 
	\item \label{LYIII-2}
		If $M = G^{\tau}$ for any element $\tau\in T^{\ad}_0\times \varsigma(\mu^*_e)$, then $T_0^{\varsigma}$ is a maximal torus of $M$.
\end{enumerate}
We may choose $\zeta\in \mu^*_m$ and apply~\ref{LYIII-1} to the automorphism $\theta(\zeta)\in G^{\ad}\rtimes \varsigma(\mu^*_e)$. Therefore, we may suppose without loss of generality that $\theta:\mu_m\to T^{\ad}_0\times\varsigma(\mu^*_e)$. Let $T = T_0^{\varsigma}$ be the fixed points of $\varsigma(\mu_e)$. By~\ref{LYIII-2}, $T$ is a maximal torus of $G^{\varsigma}$. \par

\subsection{Twisted affine root system}

The action of $\varsigma$ yields a $\bfZ/e$-grading on $\frakg$:
\[
	\frakg = \bigoplus_{i\in \bfZ / e}\frakg^i,\quad\frakg^i = \left\{ z\in \frakg\;;\; \varsigma(\zeta) z = \zeta^iz,\;\forall \zeta\in \mu_e\right\}.
\]
The finite root system $R(G, T_0)$ acquires a $\bfZ/e$-grading:
\[
	\til R(G, T) = \left\{ (\alpha, i)\in R(G, T)\times \bfZ/e\;;\; \frakg^i_{\alpha}\neq 0 \right\}.
\]
Let $\Ct$ be a maximal torus with fundamental character $\delta$. The affine root system attached to $(G, T, \varsigma)$ is defined to be
\[
	R_{\aff} = \left\{ \alpha + (k/e)\delta\in \bfX^*(T\times \Ct)_{\bfQ}\;;\; (\alpha,k\!\mod e)\in \til R(G, T) \right\}.
\]
When $e\neq 1$, the affine root system $R_{\aff}$ is a twisted affine root system\footnote{They are denoted by $A^{(2)}_k, D^{(2)}_k, D^{(3)}_4, E^{(2)}_6$ in Kac's notation with the superscript $(e)$ for $e \in \{2,3\}$.}.  \par

The projection of $\theta:\mu_m\to T^{\ad} \times \varsigma(\mu_e)$ to the first factor is a homomorphism $\ba\theta: \mu_{m}\to T^{\ad}$, so that $\theta = (\ba\theta, \theta_{\Out})$. Up to composing $\theta$ with a finite cover $\mu_{m'm}\xrightarrow{[m']} \mu_m$ for some $m'\in \bfZ_{>0}$, there exists a cocharacter $\lambda_0\in \bfX_*(T)$ such that $\lambda_0 \mid_{\mu_m} = \ba\theta$. We may assume such $\lambda_0$ exists and we fix a choice for $\lambda_0$. 

\subsection{Action of \texorpdfstring{$\Ct$}{Ct}}
The action of $\Ct$ defined in~\autoref{subsec:extratori} in the untwisted case also need some modifications. We let $\Ct$ act on a root space $\frakg^i_{\alpha}$ in $\frakg_{\ubar 0}$ (resp. in $\frakg_{\ubar \eta}$) by weight $-e\langle \alpha, \lambda_0 \rangle/ m$ (resp. by weight $e(\eta - \langle \alpha,\lambda_0\rangle)/m$). Notice that these weights are integers.

Set $T_{\diamond} = T\times \Ctm$ and $\bbA_{\diamond} = \bfX_*(T_{\diamond})_{\bfQ}$ as before. The fundamental character of $\Ctm$ is still $(1/2m)\delta$. 

Given any spiral $\frakp_*$, we let $\Ct$ acts on $\frakp_n$ by $e(n - \lambda_0)/m$ for $n\in \bfZ$. \par

\subsection{Spirals and facets}
The point $\bfx$ is defined to be $(e\lambda_0/m, 1)\in \bbA$ so that $\delta(\bfx) = 1$. \par

The correspondence between spirals and facets is unchanged. Given a facet $\tau\in \frakF$, we choose a point $y\in \tau$. Set $\mu_y = m\varepsilon\left( \bfx - y \right) \in \bfX_*\left( T \right)_{\bfQ}$. Then the spiral attached to $\tau$ is $\pre{\varepsilon}\frakp^{\tau}_*$ given by 
\[
	\pre{\varepsilon}\frakp^{\mu_y}_n = \prescript{\mu_y}{\ge n\varepsilon}\frakg_{\ubar n},
\]
see \cite[3.4.4]{LYIII}. The characterisation~\autoref{prop:Ppoids} of $T_{\diamond}$-weights appearing in a spiral remains true. \\[10pt]

\printindex
\printbibliography

\end{document}